\let\reftagform@=\tagform@
\def\tagform@#1{\maketag@@@{(\ignorespaces\textcolor{blue}{#1}\unskip\@@italiccorr)}}
\renewcommand{\eqref}[1]{\textup{\reftagform@{\ref{#1}}}}
\newtheorem{theorem}{Theorem}
\theoremstyle{plain}
\newtheorem{corollary}{Corollary}
\newtheorem{lemma}{Lemma}
\newtheorem{remark}{Remark}
\numberwithin{equation}{section}
 \DeclareMathOperator{\osc}{Osc}
\begin{document}


\title[Two--point Ostrowski inequalities]{ Two--point Ostrowski  and Ostrowski--Gr\"{u}ss type inequalities with applcations}

\author[M.W. Alomari]{Mohammad W. Alomari}

\address{Department of Mathematics, Faculty of Science and
Information Technology, Irbid National University, P.O. Box 2600,
Irbid, P.C. 21110, Jordan.} \email{mwomath@gmail.com}

\date{\today}
\subjclass[2010]{Primary: 41A10, 41A44, 41A58, 41A80.  Secondary:
26D15, 65D30, 65D32}

\keywords{ Approximations, Expansions, Quadrature rule, Euler--Maclaurin formula, Ostrowski inequality}

\begin{abstract}
In this work, an extension of two-point Ostrowski's formula for
$n$-times differentiable functions is proved. A generalization of
Taylor formula is deduced. An identity of Fink type for this
extension is provided. Error estimates for the considered formulas
are also given. Two--point  Ostrowski--Gr\"{u}ss type inequalities are
pointed out. An expansion of Guessab--Schmeisser two points
formula for $n$-times differentiable functions via  Fink type
identity is established. Generalization of the main result for
harmonic sequence of polynomials is established. Several bounds of
the presented results are proved. 
\end{abstract}

\maketitle

\section{Introduction}

To approximate $\int_a^b{f\left(t\right)dt}$, by a general
one-point rule
\begin{align}
\int_a^b{f\left(t\right)dt} =
\left(b-a\right)f\left(x\right)+E\left(f,x\right), \qquad \forall
x\in \left[a,b\right],\label{eq1.1}
\end{align}
Let us suppose  $f$ is differentiable on $\left[a,b\right]$ and
$f^{\prime }\in L[a,b]$. If $\left\| {f^{\prime
}}\right\|_{\infty}=\mathop {\sup }\limits_{x \in \left[ {a,b}
\right]} \left| {f^{\prime }(x)}\right| \leq \infty$. Therefore,
the Ostrowski estimates \cite{O1} reads:
\begin{align}
\label{eq1.2}\left\vert {E\left(f,x\right)}\right\vert \leq\left[
{\frac{\left( {b - a} \right)^2 }{4} +
\left({x-\frac{a+b}{2}}\right)^2} \right] \left\| {f^{\prime}}
\right\|_{\infty},
\end{align}
for all $x \in [a,b]$. The constant $\frac{1}{4}$ is the best
possible.

In 1976  Milovanovi\'{c} and Pe\v{c}ari\'{c} \cite{Milovanovic}
provided their famous generalization of \eqref{eq1.2}  via Taylor
series, where they proved that:
\begin{align}
\int_a^b{f\left(t\right)dt} = \frac{b-a}{n}\left( {f\left( x
\right) + \sum\limits_{k = 1}^{n - 1} {F_k \left( x \right)} }
\right)+E_n\left(f,x\right), \qquad \forall x\in
\left[a,b\right],\label{eq1.3}
\end{align}
such that
\begin{align}
F_k \left( x \right) = \frac{{n - k}}{{n!}}\cdot\frac{{f^{\left(
{k - 1} \right)} \left( a \right)\left( {x - a} \right)^k  -
f^{\left( {k - 1} \right)} \left( b \right)\left( {x - b}
\right)^k }}{{b - a}}.\label{eq1.4}
\end{align}
with error estimates
\begin{align}
\label{eq1.5}\left| { E_n\left(f,x\right)} \right| \le
\frac{{\left( {x - a} \right)^{n + 1}  + \left( {b - x} \right)^{n
+ 1} }}{{ n\left( {n + 1} \right)!}} \cdot\left\| {f^{\left( n
\right)} } \right\|_\infty,
\end{align}

In 1992, Fink studied \eqref{eq1.4} in different presentation, he
introduced a new representation of  real $n$-times differentiable
function whose $n$-th derivative $(n\ge1)$ is absolutely
continuous by combining Taylor series and Peano kernel approach
together. Namely, in \cite{Fink} we find:
\begin{align}
E_n\left(f,x\right) =  \frac{1}{n! }\int_a^b {\left( {x - t}
\right)^{n - 1} p\left( {t,x} \right)f^{\left( n \right)} \left( t
\right)dt},\label{eq1.6}
\end{align}
for all $x\in\left[{a,b}\right]$, where
\begin{align}
p\left( {t,x} \right) = \left\{ \begin{array}{l}
 t - a,\,\,\,\,\,\,\,\,\,\,\,\,\,\,\,\,\,\, t \in \left[ {a,x} \right] \\
 t - b,\,\,\,\,\,\,\,\,\,\,\,\,\,\,\,\,\,\, t \in \left[ {x,b} \right] \\
 \end{array} \right..\label{eq1.7}
\end{align}
In the same work, Fink proved the following bound of
\eqref{eq1.6}.
\begin{align}
\label{eq1.8}\left| {E_n\left(f,x\right)} \right| \le C\left(
{n,p,x} \right)\left\| {f^{\left( n \right)} } \right\|_p,
\end{align}
where $\left\|  \cdot  \right\|_r$, $1 \le r \le \infty$ are the
usual Lebesgue norms on $L_r [a, b]$, i.e.,
\begin{align*}
\left\| f \right\|_\infty : = ess\mathop {\sup }\limits_{t \in
\left[ {a,b} \right]} \left| {f\left( t \right)} \right|,
\end{align*}
 and
\begin{align*}
\left\| f \right\|_r : = \left( {\int_a^b {\left| {f\left( t
\right)} \right|^r dt} } \right)^{1/r},\,\,\,1 \le r < \infty,
\end{align*}
such that
\begin{align*}
C\left( {n,p,x} \right) = \frac{1}{ n!} {\rm{B}}^{1/q} \left(
{\left( {n - 1} \right)q + 1,q + 1} \right)\left[ {\left( {x - a}
\right)^{nq + 1}  + \left( {b - x} \right)^{nq + 1} }
\right]^{1/q},
\end{align*}
for $1 < p \le \infty$, $ {\rm{B}}\left(\cdot,\cdot\right)$ is the
beta function, and for $p=1$
\begin{align*}
C\left( {n,1,x} \right) = \frac{{\left( {n - 1} \right)^{n - 1}
}}{n^n n!}\max \left\{ {\left( {x - a} \right)^n ,\left( {b - x}
\right)^n } \right\}.
\end{align*}
 All previous
bounds are sharp.

Indeed Fink representation can be considered as the first elegant
work (after Darboux work \cite{Kythe}, p.49) that combines two
different approaches together, so that Fink representation is not
less important than Taylor expansion itself. So that, many authors
were interested to study Fink representation approach, more
detailed and related results can be found in
\cite{Aljinovic1},\cite{Aljinovic2},\cite{G1},\cite{G2} and
\cite{Dedic1}.

In 2002 and the subsequent years after that, the Ostrowski's
inequality entered in a new phase of modifications and
developments. A new inequality of Ostrowski's type was born, where
Guessab and Schmeisser in \cite{Guessab} discussed an inequality
from algebraic and analytic points of view which is in connection
with Ostrowski inequality; called  `\emph{the companion of
    Ostrowski's inequality}' as suggested later by Dragomir in
\cite{Dragomir1}. The main part of Guessab--Schmeisser inequality
reads the difference between symmetric values of a real function
$f$ defined on $[a,b]$ and its weighed value, i.e.,
\begin{align*}
\frac{{f\left( x \right) + f\left( {a + b - x} \right)}}{2} -
\frac{1}{{b - a}}\int_a^b {f\left( t \right)dt}, \qquad x \in
\left[{a,\frac{a+b}{2}}\right].
\end{align*}
Namely, in the significant work \cite{Guessab} we find the first
primary result is that:
\begin{theorem}
    \label{thm2} Let $f : [a,b] \to \mathbb{R}$ be satisfies the
    H\"{o}lder condition of order $r\in (0,1]$. Then for each $x \in
    [a,\frac{a + b}{2}]$, the we have the inequality
    \begin{align}
    \label{eq1.9} \left| {\frac{{f\left( x \right) + f\left( {a + b -
                    x} \right)}}{2} - \frac{1}{{b - a}}\int_a^b {f\left( t \right)dt}} \right|
    \le \frac{M}{{b - a}}\frac{{\left( {2x - 2a} \right)^{r + 1}  +
            \left( {a + b - 2x} \right)^{r + 1} }}{{2^r \left( {r + 1}
            \right)}}.
    \end{align}
    This inequality is sharp for each admissible $x$. Equality is
    attained if and only if $f=\pm M f_{*}+c$ with $c\in\mathbb{R}$
    and
    \begin{align*}
    f_* \left( t \right) = \left\{ \begin{array}{l}
    \left( {x - t} \right)^r ,\,\,\,\,\,\,\,\,\,\,\,\,\,\,\,\,\,\,\,\,\,\,\,{\rm{if}}\,\,\,a \le t \le x \\
    \left( {t - x} \right)^r ,\,\,\,\,\,\,\,\,\,\,\,\,\,\,\,\,\,\,\,\,\,\,\,{\rm{if}}\,\,\,x \le t \le \frac{{a + b}}{2} \\
    f_* \left( {a + b - t} \right),\,\,\,\,\,\,\,\,\,{\rm{if}}\,\,\,\frac{{a + b}}{2} \le t \le b \\
    \end{array} \right..
    \end{align*}
\end{theorem}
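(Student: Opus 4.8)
The plan is to collapse the two-point expression into a single integral over the left half $\left[a,\frac{a+b}{2}\right]$ by exploiting the reflection $t\mapsto a+b-t$, and then to estimate the integrand directly with the H\"{o}lder condition $\left|f(s)-f(t)\right|\le M\left|s-t\right|^{r}$.

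First, write $m=\frac{a+b}{2}$. Since $\frac{f(x)+f(a+b-x)}{2}$ is constant in $t$, I rewrite
\begin{align*}
\frac{f(x)+f(a+b-x)}{2}-\frac{1}{b-a}\int_a^b f(t)\,dt
=\frac{1}{b-a}\int_a^b\left[\frac{f(x)+f(a+b-x)}{2}-f(t)\right]dt.
\end{align*}
Splitting at $m$ and substituting $s=a+b-t$ in the integral over $[m,b]$ folds that piece onto $[a,m]$ and replaces $f(t)$ by $f(a+b-s)$. Combining the two halves yields the identity
\begin{align*}
\frac{f(x)+f(a+b-x)}{2}-\frac{1}{b-a}\int_a^b f(t)\,dt
=\frac{1}{b-a}\int_a^m\left[\bigl(f(x)-f(t)\bigr)+\bigl(f(a+b-x)-f(a+b-t)\bigr)\right]dt.
\end{align*}

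Next, I apply the H\"{o}lder condition to each bracketed difference. Since $\left|f(a+b-x)-f(a+b-t)\right|\le M\left|(a+b-x)-(a+b-t)\right|^{r}=M\left|x-t\right|^{r}$, both terms are bounded by $M\left|x-t\right|^{r}$, so that
\begin{align*}
\left|\frac{f(x)+f(a+b-x)}{2}-\frac{1}{b-a}\int_a^b f(t)\,dt\right|
\le\frac{2M}{b-a}\int_a^m\left|x-t\right|^{r}dt.
\end{align*}
Because $a\le x\le m$, I split the last integral at $x$ to obtain $\int_a^m\left|x-t\right|^{r}dt=\frac{(x-a)^{r+1}+(m-x)^{r+1}}{r+1}$, and substituting $m-x=\frac{a+b-2x}{2}$ together with $2(x-a)^{r+1}=2^{-r}(2x-2a)^{r+1}$ reduces the right-hand side to exactly the bound in \eqref{eq1.9}.

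For sharpness I take $f=f_*$ with $M=1$. On $[a,m]$ one has $f_*(t)=\left|t-x\right|^{r}$, so the elementary inequality $\bigl|\,|u|^{r}-|v|^{r}\,\bigr|\le|u-v|^{r}$, valid for $r\in(0,1]$, shows that $f_*$ is H\"{o}lder of order $r$ with constant $1$; the symmetric gluing at $m$ together with the matching at $x$ and $a+b-x$ keeps the global constant equal to $1$. Since $f_*(x)=f_*(a+b-x)=0$, the two-point term vanishes, so the left-hand side equals $\frac{1}{b-a}\int_a^b f_*(t)\,dt$; evaluating this integral by the same splitting as above gives precisely the right-hand side of \eqref{eq1.9}, because the H\"{o}lder estimate is saturated throughout. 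The characterization $f=\pm Mf_*+c$ then follows by tracking the equality cases of the H\"{o}lder step: adding a constant $c$ alters neither side, and replacing $f_*$ by $-f_*$ is equally admissible. The main obstacle is exactly this last point---confirming that $f_*$ is globally H\"{o}lder-$r$ with constant precisely $1$ (in particular across the two halves, where $s\in[a,m]$ and $t\in[m,b]$) and that equality genuinely propagates through every step, which requires a careful analysis of the equality conditions in $\bigl|\,|u|^{r}-|v|^{r}\,\bigr|\le|u-v|^{r}$.
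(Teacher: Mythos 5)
The paper itself offers no proof of this statement: Theorem \ref{thm2} is quoted verbatim from Guessab and Schmeisser \cite{Guessab} as background in the introduction, so there is no internal argument to compare yours against. Judged on its own, your proof of the inequality is correct and complete: the folding identity obtained by substituting $t\mapsto a+b-t$ on $\left[\frac{a+b}{2},b\right]$ is valid, both differences in the folded integrand are bounded by $M\left|x-t\right|^{r}$ (the second because the reflection preserves distances), and the resulting bound $\frac{2M}{(b-a)(r+1)}\bigl[(x-a)^{r+1}+(\frac{a+b}{2}-x)^{r+1}\bigr]$ coincides with the right-hand side of \eqref{eq1.9} via $(2x-2a)^{r+1}=2^{r+1}(x-a)^{r+1}$. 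The attainment part is also sound: $f_{*}$ is globally H\"{o}lder-$r$ with constant $1$ --- your cross-half worry is settled by observing that for $s\le\frac{a+b}{2}\le t$ one has $\left|s-(a+b-t)\right|\le t-s$, so $\left|f_*(s)-f_*(t)\right|\le\left|s-(a+b-t)\right|^{r}\le\left|s-t\right|^{r}$ using $\bigl|\,|u|^{r}-|v|^{r}\,\bigr|\le|u-v|^{r}$ --- and since $f_{*}(x)=f_{*}(a+b-x)=0$, the left-hand side reduces to $\frac{1}{b-a}\int_a^b f_{*}$, which evaluates to the stated bound.

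The one genuine gap is the necessity half of the equality characterization, which you flag but do not carry out. It does follow from your own decomposition, and is worth writing down: equality forces the folded integrand to equal $\varepsilon\cdot 2M\left|x-t\right|^{r}$ for all $t\in\left[a,\frac{a+b}{2}\right]$ with a single sign $\varepsilon=\pm1$ (constant sign because the triangle inequality for the integral must be saturated, and full magnitude because each of the two summands is individually dominated by $M\left|x-t\right|^{r}$, so both must saturate simultaneously with the same sign). The relation $f(x)-f(t)=\varepsilon M\left|x-t\right|^{r}$ then determines $f$ on $\left[a,\frac{a+b}{2}\right]$ as $f(x)-\varepsilon Mf_{*}$, the relation $f(a+b-x)-f(a+b-t)=\varepsilon M\left|x-t\right|^{r}$ determines $f$ on $\left[\frac{a+b}{2},b\right]$ as $f(a+b-x)-\varepsilon Mf_{*}$, and comparing the two at $t=\frac{a+b}{2}$ forces $f(x)=f(a+b-x)$, whence $f=\mp Mf_{*}+c$ with $c=f(x)$. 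Adding these few lines would make your argument a complete proof of the theorem as stated.
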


In the same work \cite{Guessab}, the authors discussed and
investigated  \eqref{eq1.8} for other type of assumptions. Among
others, a brilliant representation (or identity) of $n$-times
differentiable functions whose $n$-th derivatives are piecewise
continuous was established as follows:
\begin{theorem}
    \label{thm3}Let $f$ be a function defined on $[a, b]$ and having
    there a piecewise continuous $n$-th derivative. Let $Q_n$ be any
    monic polynomial of degree $n$ such that $Q_n\left(t\right)=
    \left(-1\right)^n Q_n\left(a+b-t\right)$. Define
    \begin{align*}
    K_n \left( t
    \right) = \left\{ \begin{array}{l}
    \left( {t - a} \right)^n ,\,\,\,\,\,\,\,\,\,\,\,\,{\rm{if}}\,\,\,\,a \le t \le x \\
    \\
    Q_n \left( t \right),\,\,\,\,\,\,\,\,\,\,\,\,\,\,\,\,{\rm{if}}\,\,\,\,x \le t \le a + b - x \\
    \\
    \left( {t - b} \right)^n ,\,\,\,\,\,\,\,\,\,\,\,\,{\rm{if}}\,\,\,\,a + b - x \le t \le b \\
    \end{array} \right..
    \end{align*}
    Then,
    \begin{align}
    \label{eq1.10}\int_a^b {f\left( t \right)dt}  = \left( {b - a}
    \right)\frac{{f\left( x \right) + f\left( {a + b - x} \right)}}{2}
    + E\left( {f;x} \right)
    \end{align}
    where,
    \begin{multline*}
    E\left( {f;x} \right) = \sum\limits_{\nu  = 1}^{n - 1} {\left[
        {\frac{{\left( {x - a} \right)^{\nu  + 1} }}{{\left( {\nu  + 1}
                    \right)!}}-\frac{{Q_n^{\left( {n - \nu  - 1} \right)} \left( x
                    \right)}}{{n!}}} \right]\left[ {f^{\left( \nu  \right)} \left( {a
                + b - x} \right) + \left( { - 1} \right)^\nu  f\left( x \right)}
        \right]}
    \\
    + \frac{{\left( { - 1} \right)}}{{n!}}\int_a^b {K_n \left( t
        \right)f^{\left( n \right)} \left( t \right)dt}.
    \end{multline*}
\end{theorem}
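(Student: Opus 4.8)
The plan is to establish the stated representation by computing the remainder integral $\int_a^b K_n(t)\,f^{(n)}(t)\,dt$ directly, integrating by parts $n$ times on each of the three subintervals $[a,x]$, $[x,a+b-x]$ and $[a+b-x,b]$ on which $K_n$ is a single polynomial. Writing $c:=a+b-x$ for brevity, the crucial observation is that every branch of $K_n$ is \emph{monic of degree $n$}: on $[a,x]$ it is $(t-a)^n$, on $[c,b]$ it is $(t-b)^n$, and on $[x,c]$ it is the monic polynomial $Q_n$. Hence on each piece the $n$-th derivative of the kernel equals the constant $n!$, so the repeated integration by parts terminates after $n$ steps and the final term on each piece is $(-1)^n n!\int(\cdot)\,f$; summing the three pieces reproduces $(-1)^n n!\int_a^b f(t)\,dt$, which is precisely the term that will be matched against the left-hand side of \eqref{eq1.10}.

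First I would record the general rule $\int_\alpha^\beta g\,f^{(n)}\,dt=\sum_{j=0}^{n-1}(-1)^j g^{(j)}f^{(n-1-j)}\big|_\alpha^\beta+(-1)^n\int_\alpha^\beta g^{(n)}f\,dt$ and apply it on each subinterval. The next step is to dispose of the outer endpoints $a$ and $b$: since $\tfrac{d^j}{dt^j}(t-a)^n=\tfrac{n!}{(n-j)!}(t-a)^{n-j}$ vanishes at $t=a$ for every $0\le j\le n-1$, and likewise $(t-b)^n$ together with its first $n-1$ derivatives vanishes at $t=b$, the boundary contributions at $a$ and $b$ drop out entirely. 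Consequently only the interface abscissae $t=x$ and $t=c$ survive, each receiving a contribution from the two branches meeting there.

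The heart of the argument is to fold the contributions at $c$ back onto those at $x$ using the symmetry hypothesis. Differentiating $Q_n(t)=(-1)^n Q_n(a+b-t)$ $j$ times yields $Q_n^{(j)}(c)=(-1)^{n+j}Q_n^{(j)}(x)$, while $(c-b)^{n-j}=(-1)^{n-j}(x-a)^{n-j}$ because $c-b=-(x-a)$. Substituting these into the surviving boundary terms expresses everything through the values $f^{(\nu)}(x)$ and $f^{(\nu)}(c)$; reindexing by $\nu:=n-1-j$ then collects each matched pair into the combination $\bigl[\tfrac{(x-a)^{\nu+1}}{(\nu+1)!}-\tfrac{Q_n^{(n-\nu-1)}(x)}{n!}\bigr]\bigl[f^{(\nu)}(c)+(-1)^\nu f^{(\nu)}(x)\bigr]$. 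A short separate computation shows that the index $\nu=0$ is special: the antisymmetry forced by the hypothesis gives $Q_n^{(n-1)}(t)=n!\,(t-\tfrac{a+b}{2})$, so its coefficient collapses to $(x-a)-(x-\tfrac{a+b}{2})=\tfrac{b-a}{2}$, and the $\nu=0$ term produces exactly the quadrature value $(b-a)\tfrac{f(x)+f(c)}{2}$, while the indices $\nu=1,\dots,n-1$ assemble the stated sum.

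The main obstacle is the sign bookkeeping: one must control three interacting signs simultaneously---the $(-1)^j$ from integration by parts, the reflection sign $(-1)^{n+j}$ coming from $Q_n$, and the sign $(-1)^{n-j}$ from $(c-b)^{n-j}$---and verify that after reindexing they combine into the single factor $(-1)^\nu$ appearing in the statement. I would guard against errors by first carrying out $n=1$ and $n=2$ explicitly, which already exhibit the endpoint cancellation, the midpoint antisymmetry $Q_n^{(n-1)}(\tfrac{a+b}{2})=0$, and the emergence of the main term, and only then running the general reindexing. Finally, solving the integration-by-parts identity for $\int_a^b f$ places the $f^{(n)}$-integral on the right with the normalizing factor $\tfrac{1}{n!}$, whose sign is dictated by the parity of $n$; this is the remainder $E(f;x)$ of \eqref{eq1.10}, which completes the proof.
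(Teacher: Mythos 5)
The paper itself offers no proof of this statement: it is quoted verbatim in the introduction from Guessab and Schmeisser \cite{Guessab}, so there is no internal argument to compare yours against. Judged on its own, your plan is correct and is essentially the standard Peano--kernel derivation: split at $x$ and $c=a+b-x$, integrate by parts $n$ times on each branch, use that each branch is monic of degree $n$ so the terminal integrals reassemble $(-1)^n n!\int_a^b f$, kill the boundary terms at $a$ and $b$, and fold the contributions at $c$ onto those at $x$ via $Q_n^{(j)}(c)=(-1)^{n+j}Q_n^{(j)}(x)$ and $(c-b)^{n-j}=(-1)^{n-j}(x-a)^{n-j}$. I checked the reindexing $\nu=n-1-j$: the three signs do combine as you claim, the surviving boundary sum is
\begin{equation*}
-(-1)^n\sum_{\nu=0}^{n-1}\Bigl[\tfrac{n!}{(\nu+1)!}(x-a)^{\nu+1}-Q_n^{(n-1-\nu)}(x)\Bigr]\bigl[f^{(\nu)}(c)+(-1)^\nu f^{(\nu)}(x)\bigr],
\end{equation*}
and your observation that antisymmetry forces $Q_n^{(n-1)}(t)=n!\,(t-\tfrac{a+b}{2})$ correctly turns the $\nu=0$ term into $(b-a)\tfrac{f(x)+f(c)}{2}$. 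Your derivation in fact exposes two misprints in the statement as transcribed here: the factor $(-1)^\nu f(x)$ should read $(-1)^\nu f^{(\nu)}(x)$, and the remainder coefficient should be $\tfrac{(-1)^n}{n!}$ rather than $\tfrac{(-1)}{n!}$ (a quick test with $n=2$, $x=a$, $Q_2(t)=(t-\tfrac{a+b}{2})^2$, $f(t)=t^2$ on $[0,1]$ confirms the sign $(-1)^n$). The only thing still owed is the explicit bookkeeping you defer to the general reindexing step, but the plan as described carries through without obstruction.
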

This generalization \eqref{eq1.10} can be considered  as a
companion type expansion of Euler--Maclaurin formula that expand
symmetric values of real functions. In this way, families of
various quadrature rules can be presented, as shown -for example-
in \cite{Kovac}. Therefore, since 2002 and after the presentation
of \eqref{eq1.9}, several authors have studied, developed  and
established  new presentations concerning \eqref{eq1.10} using
several approaches and different tools, for this purpose see the
recent survey \cite{Dragomir}.

Far away from this, in the last thirty years  the concept of
harmonic sequence of polynomials or Appell polynomials have been
used at large in numerical integrations and expansions theory of
real functions. Let us recall that, a  sequence of polynomials
$\left\{{P_k\left(t, \cdot\right)}\right\}_{k=0}^{\infty}$
satisfies the Appell condition (see \cite{Appell}) if
$\frac{\partial}{\partial t} P_k \left( {t, \cdot } \right)=
P_{k-1} \left( {t, \cdot } \right)$ $(\forall k\ge1)$ with $P_0
\left( {t, \cdot } \right)=1$, for all well-defined order pair
$\left( {t, \cdot } \right)$. A slightly different definition was
considered in \cite{matic1}.

In 2003, motivated by work of Mati\'{c} et. al. \cite{matic1},
Dedi\'{c} et. al. in \cite{Dedic1}, introduced the following smart
generalization of Ostrowski's inequality via harmonic sequence of
polynomials:
\begin{multline}
\label{eq1.11}\frac{1}{n}\left[ {f\left( x \right) +
    \sum\limits_{k = 1}^{n - 1} {\left( { - 1} \right)^k P_k \left( x
        \right)f^{\left( k \right)} \left( x \right)}  + \sum\limits_{k =
        1}^{n - 1} {\widetilde{F_k }\left( {a,b} \right)} } \right]
\\
= \frac{{\left( { - 1} \right)^{n - 1} }}{{\left( {b - a}
        \right)n}}\int_a^b {P_{n - 1} \left( t \right)p\left( {t,x}
    \right)f^{\left( n \right)} \left( t \right)dt},
\end{multline}
where $P_k$ is a harmonic sequence of polynomials satisfies that
$P^{\prime}_k=P_{k-1}$ with $P_0=1$,
\begin{align}
\label{eq1.12}\widetilde{F_k }\left( {a,b} \right) = \frac{{\left(
        { - 1} \right)^k \left( {n - k} \right)}}{{b - a}}\left[ {P_k
    \left( a \right)f^{\left( {k - 1} \right)} \left( a \right) - P_k
    \left( b \right)f^{\left( {k - 1} \right)} \left( b \right)}
\right]
\end{align}
and $p\left( {t,x} \right)$ is given in \eqref{eq1.5}. In
particular, if we take $P_k\left( {t}
\right)=\frac{\left(t-x\right)^k}{k!}$ then we refer to Fink
representation \eqref{eq1.5}.

In 2005, Dragomir \cite{Dragomir1}  proved the following bounds of
the companion of Ostrowski's inequality for absolutely continuous
functions.
\begin{theorem}\label{thm3}
    Let $f:I\subset \mathbb{R}\rightarrow \mathbb{R}$ be an absolutely
    continuous function on $[a,b]$. Then we have the inequalities
    \begin{multline}
    \label{eq1.13}\left| {\frac{{f\left( x \right) + f\left( {a + b -
                    x} \right)}}{2} - \frac{1}{{b - a}}\int_a^b {f\left( t \right)dt}
    } \right| \\\le \left\{ \begin{array}{l}
    \left[ {\frac{1}{8} + 2\left( {\frac{{x - {\textstyle{{3a + b} \over 4}}}}{{b - a}}} \right)^2 } \right]\left( {b - a} \right)\left\| {f'} \right\|_\infty  ,\,\,\,\,\,\,\,\,\,f' \in L_\infty  \left[ {a,b} \right] \\
    \\
    \frac{{2^{1/q} }}{{\left( {q + 1} \right)^{1/q} }}\left[ {\left( {\frac{{x - a}}{{b - a}}} \right)^{q + 1}  - \left( {\frac{{{\textstyle{{a + b} \over 2}} - x}}{{b - a}}} \right)^{q + 1} } \right]^{1/q} \left( {b - a} \right)^{1/q} \left\| {f'} \right\|_{\left[ {a,b} \right],p} , \\
    \,\,\,\,\,\,\,\,\,\,\,\,\,\,\,\,\,\,\,\,\,\,\,\,\,\,\,\,\,\,\,\,\,\,\,\,\,\,\,\,\,\,\,\,\,\,\,\,\,\,\,\,\,\,\,\,\,\,\,\,\,\,\,\,\,\,\,\,\,\,\,\,\,\,p > 1,\frac{1}{p} + \frac{1}{q} = 1,\,and\,f' \in L_p \left[ {a,b} \right] \\
    \left[ {\frac{1}{4} + \left| {\frac{{x - {\textstyle{{3a + b} \over 4}}}}{{b - a}}} \right|} \right]\left\| {f'} \right\|_{\left[ {a,b} \right],1}  \\
    \end{array} \right.
    \end{multline}
    for all $x \in [a,\frac{a + b}{2}]$. The constants $\frac{1}{8}$
    and  $\frac{1}{4}$ are the best possible in (\ref{eq1.13}) in the
    sense that it cannot be replaced by smaller constants.

\end{theorem}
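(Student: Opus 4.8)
The plan is to deduce all three estimates from a single Montgomery--type (integration-by-parts) identity and then invoke H\"older's inequality in its three dual forms. First I would introduce the piecewise-linear kernel
\[
K(t,x) = \begin{cases} t-a, & t\in[a,x], \\ t-\frac{a+b}{2}, & t\in[x,a+b-x], \\ t-b, & t\in[a+b-x,b], \end{cases}
\]
and assert the identity
\[
\frac{f(x)+f(a+b-x)}{2} - \frac{1}{b-a}\int_a^b f(t)\,dt = \frac{1}{b-a}\int_a^b K(t,x)f'(t)\,dt .
\]
Because $f$ is absolutely continuous, $f'\in L_1[a,b]$ and integration by parts is legitimate on each of the three subintervals. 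Carrying it out, the boundary contributions at $a$ and $b$ vanish since $K(a,x)=K(b,x)=0$, while the downward jumps of magnitude $\frac{b-a}{2}$ that $K(\cdot,x)$ suffers at $t=x$ and $t=a+b-x$ reproduce precisely the weighted values $\frac{b-a}{2}\bigl[f(x)+f(a+b-x)\bigr]$; since $\partial_t K\equiv 1$ on each piece, the remaining term is $-\int_a^b f$, which gives the identity.

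Granting the identity, each bound is H\"older applied to $\frac{1}{b-a}\int_a^b K(t,x)f'(t)\,dt$: the pair $(\|f'\|_\infty,\int_a^b|K|)$ for the first case, the dual pair $\bigl(\|f'\|_p,(\int_a^b|K|^q)^{1/q}\bigr)$ for the second, and $(\|f'\|_1,\sup_t|K|)$ for the third. The real content is therefore the explicit evaluation of the three norms of $K$. For the $L_1$ norm I would compute $\int_a^x(t-a)\,dt+\int_x^{a+b-x}\bigl|t-\frac{a+b}{2}\bigr|\,dt+\int_{a+b-x}^b(b-t)\,dt$, splitting the middle integral at the midpoint $\frac{a+b}{2}$ where $K$ changes sign; this yields $(x-a)^2+\bigl(\frac{a+b}{2}-x\bigr)^2$, and with $u=x-a$ the elementary identity $2u^2-(b-a)u+\frac{(b-a)^2}{4}=\frac{(b-a)^2}{8}+2\bigl(u-\frac{b-a}{4}\bigr)^2$ recasts it exactly as the bracket containing $\frac18$ in the first estimate. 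The $L_q$ computation is identical with exponent $q$ in place of $1$, giving $\frac{2}{q+1}\bigl[(x-a)^{q+1}+(\frac{a+b}{2}-x)^{q+1}\bigr]$, after which pulling out a factor $(b-a)$ reproduces the normalized second estimate (the minus sign printed there should read plus). For the supremum, $|K|$ is largest at one of the two interior breakpoints, so $\sup_t|K(t,x)|=\max\{x-a,\frac{a+b}{2}-x\}=\frac{b-a}{4}+\bigl|x-\frac{3a+b}{4}\bigr|$, which is the $\frac14$ of the third estimate.

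To show that $\frac18$ and $\frac14$ cannot be improved, I would exhibit extremal functions. In the $L_\infty$ case, equality in H\"older forces $f'=c\,\operatorname{sgn}K(\cdot,x)$ a.e.; at the balance point $x=\frac{3a+b}{4}$, where the quadratic correction vanishes, the absolutely continuous $f$ with $f'=\operatorname{sgn}K$ attains equality, so $\frac18$ cannot be lowered. In the $L_1$ case the inequality $\bigl|\int Kf'\bigr|\le\|K\|_\infty\|f'\|_1$ is saturated in the limit by absolutely continuous functions whose derivatives concentrate near the breakpoint where $|K|$ reaches its maximum, establishing the sharpness of $\frac14$. The only genuine difficulty I anticipate is bookkeeping: tracking the sign of $K$ across the two interior breakpoints and the midpoint, and matching the symmetric quantities $(x-a)^{q+1}+(\frac{a+b}{2}-x)^{q+1}$ to the normalized form written in the statement. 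Once the Montgomery-type identity is pinned down, everything else is a routine application of H\"older.
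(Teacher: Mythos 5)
Your proposal is correct. The paper itself states this theorem only as a quoted background result from Dragomir's work and gives no proof, but your argument is exactly the standard one and coincides with the specialization of the machinery the paper does develop: your kernel $K(t,x)$ is the paper's $K\left(t;x,\tfrac{a+b}{2},a+b-x\right)$ (equivalently $S_1$ with $y=x$, $z=a+b-x$, middle node $\tfrac{a+b}{2}$), your identity is the $n=1$ case of \eqref{eq2.6}, and the three H\"older pairings together with the norm evaluations $\left\|K\right\|_1=(x-a)^2+\left(\tfrac{a+b}{2}-x\right)^2$, $\left\|K\right\|_q^q=\tfrac{2}{q+1}\left[(x-a)^{q+1}+\left(\tfrac{a+b}{2}-x\right)^{q+1}\right]$, $\left\|K\right\|_\infty=\tfrac{b-a}{4}+\left|x-\tfrac{3a+b}{4}\right|$ reproduce Lemma \ref{lemma2} and the proof of Theorem \ref{thm6} in this special case. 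Your observation that the printed minus sign in the $L_p$ branch should be a plus is also right, and your sharpness argument (equality for $f'=\operatorname{sgn}K$ at $x=\tfrac{3a+b}{4}$ for $\tfrac18$, and concentration of $f'$ at a breakpoint for $\tfrac14$) is the usual one.
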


In the last fifteen years, constructions of quadrature rules using
expansion of an arbitrary function in Bernoulli polynomials and
Euler--Maclaurin's type formulae have been established, improved
and investigated. These approaches permit many researchers to work
effectively in the area of numerical integration where several
error approximations of various quadrature rules presented with
high degree of exactness. Mainly,  works of Dedi\'{c} et al.
\cite{Dedic1}--\cite{Dedic2}, Aljinovi\'{c} et al.
\cite{Aljinovic1}, \cite{Aljinovic2}, Kova\'{c} et al.
\cite{Kovac} and others, received positive responses and good
interactions from other focused researchers. Among others,
Franji\'{c} et al. in several works (such as \cite{F1}--\cite{F5})
constructed several Newton-Cotes and Gauss quadrature type rules
using a certain expansion of real functions in Bernoulli
polynomials or Euler--Maclaurin's type formulae.

Unfortunately, the expansions \eqref{eq1.3}, \eqref{eq1.10} and
\eqref{eq1.11} have not been  used to construct quadrature rules
yet. It seems these expansions were abandoned or neglected in
literature because most of authors are still use the classical
Euler--Maclaurin's formula and expansions in Bernoulli
polynomials.

In order to generalize Guessab--Schmeisser formula \eqref{eq1.9}
for symmetric and non-symmetric points,  we have introduced the
following general quadrature rule in the recent work
\cite{alomari1}.
\begin{align}
\label{eq1.14}\int_a^b {f\left( s \right)ds}=Q\left( {f;y,x,z}
\right)+E\left( {f;y,x,z} \right),
\end{align}
where $Q\left( {f;y,x,z} \right)$ is the general two-point formula
\begin{align}
\label{eq1.15}Q\left( {f;y,x,z} \right):=\left( { x-a}
\right)f\left( y \right)+\left( { b- x} \right)f\left( z \right),
\end{align}
for all $x\in \left[y,z\right]$ with $a \le y \le z \le b$, with
error term
\begin{align}
\label{eq1.16}E\left( {f;y,x,z} \right):=\int_a^b {K\left( {s;y
,x,z} \right)df\left( s \right)},
 \end{align}
where
\begin{align*}
 K\left( {s;y ,x,z } \right) = \left\{ \begin{array}{l}
 s - a,\,\,\,\,\,\,\,\,\,\,a \le s \le y
 \\
 s - x ,\,\,\,\,\,\,\,\,\,\,y  < s < z
 \\
 s - b,\,\,\,\,\,\,\,\,\,\,z \le s \le b
 \end{array} \right.,
 \end{align*}
 for all $x \in \left[y,z\right]\subseteq \left[a,b\right]$.

In the same work \cite{alomari1}, we provided error estimates of
$E\left( {f;y,x,z} \right)$ involving functions possess at most  first
derivatives. Namely, we proved the following generalization of
Ostrowski inequality which is considered as two-point inequality.
\begin{align}
\label{eq1.17} \left\vert {E\left( {f;y,x,z} \right) }\right\vert
\le \left[{ \frac{\left( {x - a} \right)^2+\left( {b - x}
\right)^2}{4} + \left( {y  - \frac{{a + x}}{2}} \right)^2 + \left(
{z - \frac{{x + b}}{2}} \right)^2  } \right] \cdot \left\| {f'}
\right\|_{\infty ,\left[ {a,b} \right]},
\end{align}
for all $a \le y \le x \le  z \le b$. The constant $\frac{1}{4}$
is the best possible.

In particular, we deduced a sharp error estimates for the average
of general two point formula as follows:
\begin{align}
\label{eq1.18} \left\vert {E\left( {f;y,\frac{a+b}{2},z} \right)
}\right\vert \le \left[{ \frac{\left( {b - a} \right)^2}{8} +
\left( {y  - \frac{{3a + b}}{4}} \right)^2 + \left( {z - \frac{{a
+ 3b}}{4}} \right)^2  } \right] \cdot \left\| {f'}
\right\|_{\infty ,\left[ {a,b} \right]}.
\end{align}
The constant $\frac{1}{8}$ is the best possible.

\begin{remark}
If we choose $y=x=z$, the we recapture the Ostrowski inequality
\eqref{eq1.2}. By setting $z=a+b-y$ and choose $x=\frac{a+b}{2}$,
we get the Guessab--Schmeisser formula \eqref{eq1.9} for $n=1$.
Also, for $y=a$ and $z=b$ we recapture the generalized trapezoid
inequality that was introduced in \cite{Cerone}.
\end{remark}
The author of this paper has been given serious attention to
Guessab--Schmeisser inequality  in the works
\cite{alomari1}--\cite{alomari8}.   For other related results and
generalizations concerning Ostrowski's inequality and its
applications we refer the reader to
\cite{G1}--\cite{Cerone3},\cite{Cerone4}, \cite{Dedic2},
\cite{Dragomir1}--\cite{Dragomir3}, \cite{Liu1}, 
 and  \cite{Sofo}.

This work has several aims and goals, the first aim is to
generalize Guessab--Schmeisser formula for symmetric and
non-symmetric two points  for $n$-times differentiable functions
via Peano functional approach and Fink type identity and thus
provide several type of bounds for the remainder formula. An
extension of our result \eqref{eq1.14} for $n$-times
differentiable functions is proved. A Fink type identity for this
extension is provided. Error estimates for the considered formulas
are also given. The second goal, is to highlight the importance of
these expansions by giving a serious attention to their applicable
usefulness in constructing various quadrature rules. The third
aim,  is to spotlight the role of \v{C}eby\v{s}ev functional in
integral approximations.

This work is organized as follows: in the next section, a
generalization of Two point formula for $n$-times differentiable
is considered. A Taylor expansion would be then deduced as a
special case. Indeed, this Two point formula can be read as an
expansion of real analytic function near two point instead of one
point. Remainder estimates for the constructed formula are
provided. The constant in the obtained estimates are shown to be
the best possible. A mean value theorem is also given. Bounds for
functions possess H\"{o}lder continuity of order $r \in
\left(0,1\right]$ are proved. In Section \ref{sec3}, A Fink type
identity for the presented two point formula is established. For
instance, a Guessab--Schmeisser two points formula for $n$-times
differentiable functions via Fink type identity is also deduced.
Bounds for the remainder term of the presented formula are proved.
In Section \ref{sec4}, Inequalities of two-point
Ostrowski--Gr\"{u}ss type are introduced. Bounds for the remainder
of some previously obtained formulas  via Chebyshev-Gr\"{u}ss type
inequalities are presented. A practicle and
applicable example of the previous section; in fact, bounds for
the Guessab--Schmeisser two points formula are given.

\section{Two-Point Ostrowski's Inequality}\label{sec2}

\subsection{Expansions}

Let $I$ be a real interval and $a,b$ in $I^{\circ}$ (the interior
of $I$) with $a<b$.  Let $H_{\left(n,i\right)}$ be a harmonic
sequences of polynomials for $i=1,2,3$, and let $f:I \to
\mathbb{R}$ be such that $f^{(n-1)}$ $(n\ge1)$ is of bounded
variation on $I$ for all $n\ge1$.

For all $y,z \in \left[a,b\right]$ with $y\le z$. Define the
kernel $S_n:\left[a,b\right]^3 \to \mathbb{R}$ given by
\begin{align}
S_n\left( {t,y,z} \right) = \left\{ \begin{array}{l}
 \frac{{\left( {t - a}
\right)^n }}{{n!}},\,\,\,\,\,\,   t \in \left[ {a,y} \right] \\
\\
 \frac{{\left( {t - x}
\right)^n }}{{n!}},\,\,\,\,\,\, t \in \left( {y,z} \right) \\
\\
 \frac{{\left( {t - b}
\right)^n }}{{n!}},\,\,\,\,\,\,  t \in \left[ {z,b} \right] \\
 \end{array} \right..\label{eq2.1}
\end{align}
Integrating by parts
\begin{multline*}
\left( { - 1} \right)^n \int_a^y {\frac{{\left( {t - a} \right)^n
}}{{n!}}df^{\left( {n - 1} \right)} \left( t \right)}  = \left( {
- 1} \right)^n  \frac{{\left( {y - a} \right)^n }}{{n!}}f^{\left(
{n - 1} \right)} \left( y \right)
\\
+ \left( { - 1} \right)^{n - 1} \int_a^y {\frac{{\left( {t - a}
\right)^{n-1} }}{{\left(n-1\right)!}} f^{\left( {n - 1} \right)}
\left( t \right)dt}.
\end{multline*}
Successive integrations by parts yield
\begin{align}
\left( { - 1} \right)^n \int_a^y {\frac{{\left( {t - a} \right)^n
}}{{n!}}df^{\left( {n - 1} \right)} \left( t \right)} =
\sum\limits_{k = 1}^n {\left( { - 1} \right)^k \frac{{\left( {y -
a} \right)^k }}{{k!}}f^{\left( {k - 1} \right)} \left( y \right)}
+ \int_a^y {f\left( t \right)dt}.\label{eq2.2}
\end{align}
Similarly,
\begin{multline}
\left( { - 1} \right)^n \int_y^z {\frac{{\left( {t - x} \right)^n
}}{{n!}}df^{\left( {n - 1} \right)} \left( t \right)}
\\
= \sum\limits_{k = 1}^n {\left( { - 1} \right)^k \left[
{\frac{{\left( {z-x} \right)^k }}{{k!}}f^{\left( {k - 1} \right)}
\left( z \right) - \frac{{\left( {y-x} \right)^k}}{{k!}}f^{\left(
{k - 1} \right)} \left( y \right)} \right]}  + \int_y^z {f\left( t
\right)dt},\label{eq2.3}
\end{multline}
and
\begin{align}
\left( { - 1} \right)^n \int_z^b {\frac{{\left( {t-b} \right)^n
}}{{n!}}df^{\left( {n - 1} \right)} \left( t \right)} =-
\sum\limits_{k = 1}^n {\left( { - 1} \right)^k \frac{{\left( {z-b}
\right)^k }}{{k!}}f^{\left( {k - 1} \right)} \left( z \right) }  +
\int_z^b {f\left( t \right)dt}.\label{eq2.4}
\end{align}
Adding \eqref{eq2.2}--\eqref{eq2.4} we get the representation
\begin{multline}
\left( { - 1} \right)^n \int_a^b {S_n \left( {t,y,z}
\right)df^{\left( {n - 1} \right)} \left( t \right)}
\\
=  \sum\limits_{k = 1}^n {\frac{{\left( { - 1} \right)^k
}}{{k!}}\left[ {\left\{ {\left( {y - a} \right)^k  - \left( {y -
x} \right)^k } \right\}f^{\left( {k - 1} \right)} \left( y \right)
+ \left\{ {\left( {z - x} \right)^k  - \left( {z - b} \right)^k }
\right\}f^{\left( {k - 1} \right)} \left( z \right)} \right]}  +
\int_a^b {f\left( t \right)dt}. \label{eq2.5}
\end{multline}
A convenient presentation of \eqref{eq2.5} which gives an
expansion of real function $f$ near arbitrary two point $y,z\in
[a,b]$ with $y\le z$ can be written as:
\begin{align}
\left( {x - a} \right)f\left( y \right) + \left( {b - x}
\right)f\left( z \right)  =\int_a^b {f\left( t \right)dt} +
Q_n\left( {f,S_n;y,x,z} \right) + R_n\left( {f,S_n;y,x,z}
\right),\label{eq2.6}
\end{align}
for all $a \le y \le x \le z \le b$, where $Q_n\left( f;y,x,z
\right)$ is the two-point Ostrowski's formula given by
\begin{multline}
Q_n\left( {f,S_n;y,x,z} \right)\label{eq2.7}
\\
:=\sum\limits_{k = 2}^n {\frac{{\left( { - 1} \right)^k
}}{{k!}}\left[ {\left\{ {\left( {y - a} \right)^k - \left( {y - x}
\right)^k } \right\}f^{\left( {k - 1} \right)} \left( y \right) +
\left\{ {\left( {z - x} \right)^k - \left( {z - b} \right)^k }
\right\}f^{\left( {k - 1} \right)} \left( z \right)} \right]},
\end{multline}
and $R_n\left( {f,S_n;y,x,z} \right)$ is the remainder term given
such as
\begin{align}
\label{eq2.8}R_n\left( {f,S_n;y,x,z} \right):=\left( { - 1}
\right)^{n + 1} \int_a^b {S_n \left( {t;y,x,z} \right)df^{\left(
{n - 1} \right)} \left( t \right)},
\end{align}
for all $a\le  y \le x \le z \le b$.
If $n=1$, then the summation in \eqref{eq2.7} is defined to be  $0$.\\

\begin{remark}
\label{remark3}In \eqref{eq2.5} setting $a=y$, $b=z$ and replace
every $f$ by $f^{\prime}$, rearranging the terms gives the
generalized Taylor formula
\begin{align*}
f\left( z \right)   = f\left( y \right) + \sum\limits_{k = 1}^n
{\frac{1}{{k!}}\left[ {\left( {x - y} \right)^k f^{\left( {k}
\right)} \left( y \right)- \left( {x - z} \right)^kf^{\left( {k }
\right)} \left( z \right) } \right]} +\frac{1}{n!} \int_y^z
{\left( {x-t} \right)^n f^{\left( {n+1} \right)} \left( t
\right)dt}
\end{align*}
for all $x\in \left[y,z\right]$. This formula expand $f$ near two
point instead of one point. Moreover, if one chooses $y=a$ and
$z=x$ then we recapture the celebrated Taylor formula. From different  point of view, another Two-point formula was considered by Davis in (\cite{D}, p.37),  which completely independent formula. 
\end{remark}

\begin{theorem}
\label{thm4}If $f^{\left( {2n} \right)}$ $(n\ge1)$ is continuous
on $\left[a,b\right]$, then there exists $\eta \in
\left(a,b\right)$ such that
\begin{multline}
R_{2n}\left( f,S_{2n};y,x,z \right)
\\
=-\frac{1}{{\left( {2n + 1} \right)!}}\left[ {\left( {y - a}
\right)^{2n + 1}  + \left( {x - y} \right)^{2n + 1}  + \left( {z -
x} \right)^{2n + 1}  + \left( {b - z} \right)^{2n + 1} }
\right]\cdot f^{\left( {2n} \right)} \left( \eta
\right).\label{eq2.9}
\end{multline}
\end{theorem}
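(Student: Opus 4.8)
The plan is to exploit the single structural feature that makes the even order special: in \eqref{eq2.1} the exponent $2n$ is even, so every branch of the kernel---$(t-a)^{2n}/(2n)!$, $(t-x)^{2n}/(2n)!$, $(t-b)^{2n}/(2n)!$---is nonnegative. Hence $S_{2n}(t,y,z)\ge 0$ for all $t\in[a,b]$, and it is not identically zero on a nondegenerate interval. This sign condition is exactly what a weighted mean value theorem for integrals requires, so the strategy is to pass from the Riemann--Stieltjes form \eqref{eq2.8} to an ordinary integral, pull out $f^{(2n)}$ at an intermediate point, and then evaluate what remains explicitly.

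First I would use the continuity of $f^{(2n)}$ to write $df^{(2n-1)}(t)=f^{(2n)}(t)\,dt$, so that the Stieltjes integral in \eqref{eq2.8} becomes a genuine Riemann integral. Since $(-1)^{2n+1}=-1$, this gives
\begin{equation*}
R_{2n}\left(f,S_{2n};y,x,z\right)=-\int_a^b S_{2n}\left(t,y,z\right)f^{\left(2n\right)}\left(t\right)\,dt.
\end{equation*}
With $f^{(2n)}$ continuous and $S_{2n}\ge 0$ integrable and of one sign, the weighted (first) mean value theorem for integrals yields some $\eta\in(a,b)$ for which
\begin{equation*}
R_{2n}\left(f,S_{2n};y,x,z\right)=-f^{\left(2n\right)}\left(\eta\right)\int_a^b S_{2n}\left(t,y,z\right)\,dt.
\end{equation*}
One small point to handle with a little care is to guarantee $\eta$ lies in the open interval rather than merely the closed one; this is immediate because $S_{2n}$ is strictly positive away from the finitely many breakpoints, so it is not almost everywhere zero.

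It then remains to compute $\int_a^b S_{2n}(t,y,z)\,dt$ by splitting it along the three branches of \eqref{eq2.1} over $[a,y]$, $(y,z)$ and $[z,b]$. Each piece integrates to a power divided by $(2n+1)!$, and the only thing demanding attention is bookkeeping of signs: since $a\le y\le x\le z\le b$, the odd power $2n+1$ turns $(y-x)^{2n+1}$ into $-(x-y)^{2n+1}$ and $(z-b)^{2n+1}$ into $-(b-z)^{2n+1}$, so all four contributions come out positive. Collecting them gives
\begin{equation*}
\int_a^b S_{2n}\left(t,y,z\right)\,dt=\frac{1}{\left(2n+1\right)!}\left[\left(y-a\right)^{2n+1}+\left(x-y\right)^{2n+1}+\left(z-x\right)^{2n+1}+\left(b-z\right)^{2n+1}\right],
\end{equation*}
which substituted above is precisely \eqref{eq2.9}. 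The argument is essentially all routine once the parity observation is made; there is no genuine obstacle, only the need to track the sign conventions in the last computation and to secure $\eta$ in the interior.
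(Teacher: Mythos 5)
Your proposal is correct and follows essentially the same route as the paper: observe that the even exponent makes $S_{2n}$ of one sign on $[a,b]$, convert the Stieltjes integral in \eqref{eq2.8} to a Riemann integral, apply the weighted mean value theorem to extract $f^{(2n)}(\eta)$, and evaluate $\int_a^b S_{2n}$ branch by branch (which is the $n\mapsto 2n$ case of \eqref{eq2.11}). Your added remarks on securing $\eta$ in the open interval and on the sign bookkeeping for the odd powers are sound refinements of the same argument.
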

\begin{proof}
From \eqref{eq2.8} since $S_{2n} \left( {t;y,x,z} \right)$ does
not change sign over all $\left[a,b\right]$, the by Mean Value
theorem there is an $\eta \in \left(a,b\right)$ such that
\begin{align*}
&R_{2n}\left( {f,S_{2n};y,x,z} \right)
\\
&=\left( { - 1} \right)^{2n + 1} \int_a^b {S_{2n} \left( {t;y,x,z}
\right)f^{\left( {2n} \right)} \left( t \right)dt}
\\
&=- f^{\left( {2n} \right)} \left( \eta \right)\int_a^b {S_{2n}
\left( {t;y,x,z} \right)dt}
\\
&=-\frac{1}{{\left( {2n + 1} \right)!}}\left[ {\left( {y - a}
\right)^{2n + 1}  + \left( {x - y} \right)^{2n + 1}  + \left( {z -
x} \right)^{2n + 1}  + \left( {b - z} \right)^{2n + 1} }
\right]\cdot f^{\left( {2n} \right)} \left( \eta \right),
\end{align*}
which proves the result.
\end{proof}
In the previous theorem, since the error term for the Two-point
rule \eqref{eq2.9} involves $f^{\left(2n\right)}$, the rule gives
the exact result when applied to any function whose $(2n)$-th
derivative is identically zero, that is, any polynomial of degree
$\le 2n-1$, $n\in \mathbb{N}$.
\subsection{Remainder Estimates}

Let $f$ be defined on $[a,b]$, $P := \left\{ {x_0, x_1, \cdots,
x_n} \right\}$ be a partition of $[a,b]$, and
$$\Delta f_i  = f\left( {x_i } \right) - f\left( {x_{i - 1} }
\right),$$ for $i=1,2, \cdots, n$. A function $f$ is said to be of
bounded $p$-variation  if there exists a positive number $M$ such
that $\left( {\sum\limits_{i = 1}^n {\left| {\Delta f_i }
\right|^p} } \right)^{\frac{1}{p}}  \le M$, $(1 \le p < \infty)$
for all partitions of $[a,b]$.

Let $f$ be of bounded $p$-variation on $\left[a,b\right]$, and let
$\sum (P)$ denote the sum $\left( {\sum\limits_{i = 1}^n {\left|
{\Delta f_i } \right|^p} } \right)^{\frac{1}{p}} $ corresponding
to the partition $P$ of $[a,b]$. The number
\begin{align*}
\bigvee_{a}^{b}(f;p) = \sup\left\{ {\sum (P): P \in\mathcal{P}
\left( \left[ a,b \right]  \right)} \right\},\,\,\,\,\,\,\,\,\,\,1
\le p < \infty
\end{align*}
is called the total $p$--variation of $f$ on the interval $[a,b]$,
where $\mathcal{P}{\left( \left[ a,b \right]  \right) }$ denotes
the set of all partitions of $\left[ a,b \right]$. It can be
easily seen that for $p = 1$, $p$-variation  reduces to ordinary
variation or Jordan variation of functions.

In special case,   if there exists a positive number $M$ such that
\begin{align*}
\sum\limits_{i = 1}^n {  \osc\left( {f;\left[ {x_{i-1}^{\left( n
\right)} ,x_{i}^{\left( n \right)} } \right]}
\right)}=\sum\limits_{i = 1}^n {\left({  \sup  -  \inf
  }\right)  f\left( t_i \right) } \le M, \qquad  t_i  \in \left[ {x_{i-1}^{\left( n \right)} ,x_{i}^{\left( n
\right)} } \right],
\end{align*}
for all partitions of $\left[ a,b \right]$, then $f$ is said to
have $\infty$--variation on $\left[ a,b \right]$. The number
\begin{align*}
\bigvee_a^b(f;\infty) = \sup\left\{ {\sum (P): P \in
\mathcal{P}{[a,b]}} \right\}:=\osc\left( {f;\left[ {a,b} \right]}
\right),
\end{align*}
is called the oscillation of $f$ on $[a,b]$. Equivalently, we may
define the oscillation of $f$ such as, (see \cite{Dudley}):
\begin{align*}
\bigvee_a^b(f;\infty)= \mathop {\lim }\limits_{p \to \infty
}\bigvee_a^b(f;p)&= \mathop {\sup }\limits_{x \in \left[ {a,b}
\right]} \left\{ {f\left( x \right)} \right\} - \mathop {\inf
}\limits_{x \in \left[ {a,b} \right]} \left\{ {f\left( x \right)}
\right\}
\\
&= \osc\left( {f;\left[ {a,b} \right]} \right).
\end{align*}

If $f$ is a real function of bounded $p$-variation on an interval
$\left[ a,b \right]$, then  (\cite{alomari1}):
\begin{itemize}
\item $f$ is bounded, and
\begin{align*}
\osc\left( {f;\left[ {a,b} \right]} \right) \le
\bigvee_a^b(f;p)\le \bigvee_a^b(f;1).
\end{align*}
This fact follows by Jensen's inequality applied for
$h(p)=\bigvee_a^b(f;p)$ which is log-convex and decreasing for all
$p>1$. Moreover,   the
 inclusions
\begin{align*}
\mathcal{W}_{\infty}\left(f\right) \subset
\mathcal{W}_q\left(f\right) \subset
\mathcal{W}_p\left(f\right)\subset \mathcal{W}_1\left(f\right)
\end{align*}
are valid for all $1 < p < q < \infty$, where
$\mathcal{W}_p\left(\cdot\right)$ denotes the class of all
functions of bounded $p$-variation $(1\le p \le \infty)$ (see
\cite{Young}).

\item $f$ is continuous except at most on a countable set.

\item $f$ has one-sided limits everywhere (limits from the left
everywhere in $\left( a,b \right]$, and from the right everywhere
in $\left[ a,b \right)$;

\item The derivative $f^{\prime}(x)$ exists almost everywhere
(i.e. except for a set of measure zero).

\item If $f\left(x\right)$ is differentiable on $\left[ a,b
\right]$, then
\begin{align*}
\bigvee_a^b\left( {f;p} \right)  = \left( {\int_a^b {\left|
{f^{\prime}\left( t \right)} \right|^p dt} } \right)^{
\frac{1}{p}}=\left\| f^{\prime} \right\|_p, \qquad   1\le p <
\infty.
\end{align*}
\end{itemize}

In \cite{alomari1}, we find the following lemma:
\begin{lemma} \label{lemma1}
\emph{Fix $1 \le p < \infty$. Let $f,g : \left[ a,b \right] \to
\mathbb{R}$ be such that $f$ is continuous on $[a,b]$ and $g$ is
of bounded $p$--variation on $\left[ a,b \right]$. Then the
Riemann--Stieltjes integral $\int_a^b {f\left( t \right)dg\left( t
\right)}$ exists and the inequality:
\begin{align}
\label{eq2.10}\left| {\int_a^b {w\left( t \right)d\nu\left( t
\right)} } \right| \le \left\|{w}\right\|_{\infty}\cdot \osc\left(
{\nu;\left[ {a,b} \right]} \right)\le
\left\|{w}\right\|_{\infty}\cdot\bigvee_a^b\left( {\nu;p} \right),
\end{align}
holds.  The constant $`1$' in the both inequalities is the best
possible. }
\end{lemma}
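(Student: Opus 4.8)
The plan is to split the statement into three tasks: existence of the Riemann--Stieltjes integral, the two displayed inequalities, and the sharpness of the constant $1$. For existence I would first treat the case $p=1$ by the classical Riemann--Stieltjes existence theorem: since $w$ is continuous (hence uniformly continuous) on $[a,b]$ and $\nu$ is of bounded variation, the Riemann--Stieltjes sums form a Cauchy net and $\int_a^b w\,d\nu$ exists. For $1<p<\infty$ the integrator $\nu$ need no longer be of bounded variation, but it is regulated --- it has one-sided limits everywhere and at most countably many discontinuities, as recorded just above --- so $w$ and $\nu$ share no point of discontinuity. I would then appeal to Young's theory of integration for functions of bounded $p$-variation to secure existence; making this rigorous for large $p$ (where a merely continuous $w$ may fail to have complementary finite $q$-variation) is a technical point I would have to watch.

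For the inequalities the natural route is to bound a generic Riemann--Stieltjes sum directly: for any tagged partition $P=\{a=x_0<\dots<x_m=b\}$ with tags $\xi_i$,
\[
\Bigl|\sum_{i=1}^m w(\xi_i)\bigl(\nu(x_i)-\nu(x_{i-1})\bigr)\Bigr| \le \|w\|_\infty \sum_{i=1}^m |\nu(x_i)-\nu(x_{i-1})|,
\]
and then pass to the limit. The second inequality of the lemma, $\osc(\nu;[a,b]) \le \bigvee_a^b(\nu;p)$, is already available from the chain $\osc(\cdot)\le\bigvee(\cdot;p)\le\bigvee(\cdot;1)$ established above, so it needs no further work. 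The delicate part is the \emph{first} inequality, which asks to bound the integral by $\|w\|_\infty\,\osc(\nu;[a,b])$ rather than by the larger total variation that the crude sum estimate produces. I would obtain this sharper bound by first reducing to a monotone integrator, where the first mean value theorem for the Riemann--Stieltjes integral gives $\int_a^b w\,d\nu = w(\xi)\,(\nu(b)-\nu(a))$ for some $\xi\in[a,b]$, whence $|\int_a^b w\,d\nu| \le \|w\|_\infty\,|\nu(b)-\nu(a)| = \|w\|_\infty\,\osc(\nu;[a,b])$.

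This reduction is exactly where I expect the main obstacle to lie. Replacing the total-variation bound coming from the triangle inequality by the smaller oscillation requires genuine control on the \emph{signed} increments $\nu(x_i)-\nu(x_{i-1})$, and such control is transparent only when $\nu$ is monotone, so that $\osc(\nu;[a,b])=|\nu(b)-\nu(a)|$ coincides with the total variation. For an integrator of bounded $p$-variation that genuinely oscillates, the increments may reinforce one another against the sign of $w$, so the passage from total variation down to $\osc(\nu)$ is the crux I would scrutinise most carefully; I would therefore organise the argument by settling the monotone case first and only then asking how far a general integrator can be handled.

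Finally, for sharpness I would test both inequalities on a continuous monotone $\nu$ together with $w\equiv 1$: then $\int_a^b w\,d\nu = \nu(b)-\nu(a)$, while $\|w\|_\infty=1$ and $\osc(\nu;[a,b]) = \bigvee_a^b(\nu;p) = |\nu(b)-\nu(a)|$, so both inequalities collapse to equalities simultaneously. This exhibits that the constant $1$ cannot be decreased in either place and completes the verification.
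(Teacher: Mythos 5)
The paper never actually proves this lemma---it is imported verbatim from \cite{alomari1}---so there is no argument of the paper's to set yours against; your proposal has to stand on its own, and it does not close. The parts you complete are fine: existence for $p=1$ by the classical continuous-integrand/bounded-variation criterion, the comparison $\osc\left( {\nu;\left[ {a,b} \right]} \right)\le\bigvee_a^b\left( {\nu;p} \right)$, and the sharpness test with $w\equiv 1$ and a monotone $\nu$. But the step you yourself single out as the crux---replacing the total variation $\bigvee_a^b\left( {\nu;1} \right)$ that the Riemann--Stieltjes sums actually produce by the oscillation $\osc\left( {\nu;\left[ {a,b} \right]} \right)=\sup\nu-\inf\nu$---is established only for monotone $\nu$, and the promised ``reduction to a monotone integrator'' is never carried out. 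This is a genuine gap, not a deferred technicality, and it cannot be filled, because the first inequality in \eqref{eq2.10} is false for oscillating integrators. On $[0,1]$ take $w(t)=\cos(2\pi Nt)$ and $\nu(t)=\sin(2\pi Nt)$: both are smooth, $\nu$ has bounded $p$-variation for every $p\in[1,\infty)$, $\left\|w\right\|_\infty=1$ and $\osc\left( {\nu;\left[ {0,1} \right]} \right)=2$, yet
\begin{align*}
\int_0^1 {w\left( t \right)d\nu\left( t \right)}=2\pi N\int_0^1{\cos^2\left( {2\pi Nt} \right)dt}=\pi N,
\end{align*}
which exceeds $2$ for every $N\ge 1$. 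Moreover $\bigvee_0^1\left( {\nu;p} \right)\le 2\left( {2N} \right)^{1/p}$, so for each fixed $p>1$ the outer bound in \eqref{eq2.10} also fails once $N$ is large; only the $p=1$ estimate $\bigl|\int_a^b w\,d\nu\bigr|\le\left\|w\right\|_\infty\bigvee_a^b\left( {\nu;1} \right)$ survives, and that is the classical inequality.

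So your instinct to settle the monotone case first and then ask how far a general integrator can be pushed was exactly the right question, and the answer is: no further. A correct version of the lemma must either restrict to $p=1$ with the total variation on the right-hand side, or assume $\nu$ monotone, in which case $\osc\left( {\nu;\left[ {a,b} \right]} \right)=\bigvee_a^b\left( {\nu;p} \right)=\left|\nu(b)-\nu(a)\right|$ and your mean-value argument does the job. The same caution applies to existence for $1<p<\infty$: as you note, Young's theorem requires $w$ to have finite $q$-variation with $\frac1p+\frac1q>1$, which mere continuity of $w$ does not supply, so that part of the statement also needs an extra hypothesis. Since Theorem \ref{thm5} only ever uses $\sup\left|S_n\right|$ times a variation of $f^{(n-1)}$, the safe course is to prove and invoke the classical $p=1$ inequality and drop the oscillation refinement.
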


In all next results, we need the following identities.
\begin{lemma} \label{lemma2}
    For the kernel $S_n:[a,b]^3\to\mathbb{R}$ $(n\ge1)$
given in \eqref{eq2.1}, and for all $a\le y \le x \le z \le b$, we
have the following computations:
\begin{align}
&\int_a^b {S_n \left( {t;y,x,z} \right)dt}
\label{eq2.11} \\
&= \frac{1}{{\left( {n + 1} \right)!}}\left[ {\left( {y - a}
    \right)^{n + 1}  + \left( {z - x} \right)^{n + 1}  + \left( { - 1}
    \right)^n \left( {x - y} \right)^{n + 1}  + \left( { - 1}
    \right)^n \left( {b - z} \right)^{n + 1} } \right].
\nonumber\\
\nonumber\\
&\int_a^b {\left| {S_n \left( {t;y,x,z} \right)} \right|dt}
\label{eq2.12}\\
&= \frac{1}{{\left( {n + 1} \right)!}}\left[ {\left( {y - a}
    \right)^{n + 1}  + \left( {x - y} \right)^{n + 1}  + \left( {z -
        x} \right)^{n + 1}  + \left( {b - z} \right)^{n + 1} }
\right].\nonumber
\\
\nonumber\\
&\int_a^b {\left|S_n \left( {t;y,x,z} \right)\right|^qdt}
\label{eq2.13}\\
&= \frac{1}{{\left( {nq + 1} \right)  n!}}\left[ {\left( {y - a}
    \right)^{nq + 1}  + \left( {x - y} \right)^{nq + 1}  + \left( {z -
        x} \right)^{nq + 1}  + \left( {b - z} \right)^{nq + 1} } \right],
\,\,\forall q\ge1.
\nonumber\\
\nonumber\\
&\mathop {\sup }\limits_{a \le t \le b} \left| {S_n \left(
    {t;y,x,z} \right)} \right| = \frac{1}{{n!}}\left[ {\max \left\{
    {\left( {y - a} \right),\left( {\frac{{z - y}}{2} + \left|
            {\frac{{y + z}}{2} - x} \right|} \right),\left( {b - z} \right)}
    \right\}} \right]^n.\label{eq2.14}
\end{align}
\end{lemma}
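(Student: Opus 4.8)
The plan is to establish all four identities by a single elementary device: decompose each integral over $[a,b]$ into the three subintervals $[a,y]$, $(y,z)$, $[z,b]$, on which $S_n$ reduces to a single power function, and then integrate each monomial by the power rule. No deeper machinery is required; the content is entirely disciplined bookkeeping of signs and absolute values, so I expect that to be the only real obstacle.

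For \eqref{eq2.11} I would compute the three contributions $\int_a^y \frac{(t-a)^n}{n!}\,dt = \frac{(y-a)^{n+1}}{(n+1)!}$, then $\int_y^z \frac{(t-x)^n}{n!}\,dt = \frac{1}{(n+1)!}\bigl[(z-x)^{n+1}-(y-x)^{n+1}\bigr]$, and $\int_z^b \frac{(t-b)^n}{n!}\,dt = -\frac{(z-b)^{n+1}}{(n+1)!}$. The only point requiring care is rewriting the boundary terms so that the orientation is correct: using $(y-x)^{n+1}=(-1)^{n+1}(x-y)^{n+1}$ and $(z-b)^{n+1}=(-1)^{n+1}(b-z)^{n+1}$, the signs collapse to the stated $(-1)^n$ factors, and summing the three pieces reproduces \eqref{eq2.11}.

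For \eqref{eq2.12} and \eqref{eq2.13} the same three-way split applies, but the integrand now carries an absolute value. On $[a,y]$ we have $t-a\ge 0$ and on $[z,b]$ we have $t-b\le 0$, so $|t-a|^n=(t-a)^n$ and $|t-b|^n=(b-t)^n$, and those two pieces integrate with no sign subtlety. The one genuinely new step is the middle interval: since $y\le x\le z$, the factor $t-x$ changes sign at $t=x$, so I would split $\int_y^z$ at $t=x$, writing $|t-x|^n=(x-t)^n$ on $[y,x]$ and $|t-x|^n=(t-x)^n$ on $[x,z]$. This yields the two terms $(x-y)^{n+1}$ and $(z-x)^{n+1}$ with positive signs, giving \eqref{eq2.12}; replacing the exponent $n$ by $nq$ throughout and tracking the resulting power denominator gives the analogous expression \eqref{eq2.13}.

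For the sup-norm \eqref{eq2.14} I would maximize $|S_n|$ piecewise. On $[a,y]$ the map $t\mapsto (t-a)^n/n!$ is increasing, with maximum $(y-a)^n/n!$ at $t=y$; on $[z,b]$ the map $t\mapsto (b-t)^n/n!$ is decreasing, with maximum $(b-z)^n/n!$ at $t=z$; on $[y,z]$ the convex map $t\mapsto |t-x|^n/n!$ attains its maximum at an endpoint, giving $\max\{x-y,\,z-x\}^n/n!$. The last step, and the only one not purely mechanical, is the algebraic identity $\max\{x-y,\,z-x\}=\frac{z-y}{2}+\bigl|\frac{y+z}{2}-x\bigr|$, which follows by writing $x-y$ and $z-x$ each as $\frac{z-y}{2}$ plus $x-\frac{y+z}{2}$ and $\frac{y+z}{2}-x$, respectively. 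Taking the overall maximum of the three candidates, and using the monotonicity of $s\mapsto s^n$ on $[0,\infty)$ to pull the exponent outside the $\max$, yields \eqref{eq2.14}.
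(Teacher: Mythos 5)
Your proof is correct and is exactly the elementary piecewise computation the paper intends (its own proof reads only ``The proof is straightforward''): split at $y$, $x$, $z$, integrate each power, and handle the sign change of $t-x$ at $t=x$ for the absolute values and the endpoint maxima for the sup, including the identity $\max\{x-y,z-x\}=\frac{z-y}{2}+\left|\frac{y+z}{2}-x\right|$. One caveat concerning \eqref{eq2.13}: since $\left|S_n(t;y,x,z)\right|^q$ carries the factor $\frac{1}{(n!)^q}$ and not $\frac{1}{n!}$, your computation actually yields the denominator $(nq+1)(n!)^q$, not the printed $(nq+1)\,n!$; the printed formula is a typo (Theorem \ref{thm6} uses the value whose $q$-th root is $\frac{1}{(nq+1)^{1/q}\,n!}[\cdots]^{1/q}$, which corresponds to $(n!)^q$), so you should state the $(n!)^q$ version explicitly rather than claim that ``replacing the exponent $n$ by $nq$'' reproduces \eqref{eq2.13} verbatim.
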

\begin{proof}
The proof is straightforward.
\end{proof}

Now we are ready to state our first result regarding the
estimation of the error term $R_n\left( f;y,x,z \right)$.
\begin{theorem}
\label{thm5}Let $I$ be a real interval such that $a,b$ in
$I^{\circ}$; the interior of $I$ with $a<b$. Let $f:I\to
\mathbb{R}$ be such that $f^{\left( {n - 1} \right)}$ is  of
bounded $p$-variation $(1\le p \le \infty)$ on $[a,b]\subset
I^{\circ}$, $\forall n\ge1$. Then
\begin{multline}
\left|{R_n\left( f,S_n;y,x,z \right)} \right|
\\
\le  \frac{1}{{n!}}\left[ {\max \left\{ {\left( {y - a}
\right),\left( {\frac{{z - y}}{2} + \left| {\frac{{y + z}}{2} - x}
\right|} \right),\left( {b - z} \right)} \right\}} \right]^n \cdot
\bigvee_a^b \left( {f^{\left( {n - 1} \right)} ,p}
\right),\label{eq2.15}
\end{multline}
for all $a \le y \le x \le z \le b$.

 Moreover, if $f^{\left(n\right)}$
exists then
\begin{align*} \bigvee_a^b\left( {f^{\left(n-1\right)};p} \right)  =
\left( {\int_a^b {\left| {f^{\left(n\right)}\left( t \right)}
\right|^p dt} } \right)^{ \frac{1}{p}}=\left\| f^{\left(n\right)}
\right\|_p, \qquad 1\le p < \infty,
\end{align*}
and therefore
\begin{multline}
\left|{R_n\left( f,S_n;y,x,z \right)} \right| \\\le
\frac{1}{{n!}}\left[ {\max \left\{ {\left( {y - a} \right),\left(
{\frac{{z - y}}{2} + \left| {\frac{{y + z}}{2} - x} \right|}
\right),\left( {b - z} \right)} \right\}} \right]^n \cdot \left\|
f^{\left(n\right)} \right\|_p,\label{eq2.16}
\end{multline}
\end{theorem}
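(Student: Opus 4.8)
The plan is to read the bound off directly from the Riemann--Stieltjes representation \eqref{eq2.8} of the remainder, using the two lemmas already in place. Since $\left|(-1)^{n+1}\right|=1$, we have
\[
\left|R_n\left(f,S_n;y,x,z\right)\right| = \left|\int_a^b S_n\left(t;y,x,z\right)\,df^{(n-1)}(t)\right|,
\]
so the whole matter reduces to estimating a single Stieltjes integral whose integrand is the explicit kernel $S_n$ and whose integrator $f^{(n-1)}$ is, by hypothesis, of bounded $p$-variation on $[a,b]$. The natural tool is Lemma \ref{lemma1}: applied with $w=S_n$ and $\nu=f^{(n-1)}$ it yields
\[
\left|R_n\left(f,S_n;y,x,z\right)\right| \le \left\|S_n\right\|_\infty\cdot\osc\left(f^{(n-1)};[a,b]\right) \le \left\|S_n\right\|_\infty\cdot\bigvee_a^b\left(f^{(n-1)};p\right),
\]
where $\left\|S_n\right\|_\infty=\sup_{a\le t\le b}\left|S_n(t;y,x,z)\right|$; for $p=\infty$ the first, sharper, inequality is already the claim.

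The second step is purely computational: I would insert the value of the supremum norm of the kernel recorded in \eqref{eq2.14} of Lemma \ref{lemma2}, namely
\[
\left\|S_n\right\|_\infty=\frac{1}{n!}\left[\max\left\{(y-a),\ \frac{z-y}{2}+\left|\frac{y+z}{2}-x\right|,\ (b-z)\right\}\right]^n,
\]
and substituting it into the previous display produces exactly \eqref{eq2.15}. For the ``moreover'' part I would simply invoke the last of the listed properties of functions of bounded $p$-variation: when $f^{(n)}$ exists the integrator $f^{(n-1)}$ is differentiable, so $\bigvee_a^b(f^{(n-1)};p)=\left(\int_a^b|f^{(n)}(t)|^p\,dt\right)^{1/p}=\|f^{(n)}\|_p$ for $1\le p<\infty$; inserting this identity into \eqref{eq2.15} gives \eqref{eq2.16}.

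The one delicate point, and the step I expect to need the most care, is that $S_n$ is only piecewise polynomial: at $t=y$ its one-sided values $(y-a)^n/n!$ and $(y-x)^n/n!$ disagree in general (and similarly at $t=z$), so the continuity hypothesis on the integrand in Lemma \ref{lemma1} is not met verbatim. I would resolve this by observing that $S_n$ is nonetheless bounded with only the two jump discontinuities at $y$ and $z$; since $f^{(n-1)}$ is of bounded $p$-variation (hence has one-sided limits everywhere and at most countably many discontinuities), the integral in \eqref{eq2.8} exists — indeed $R_n$ was generated precisely through the piecewise integrations by parts \eqref{eq2.2}--\eqref{eq2.4} — and the Riemann-sum estimate underlying Lemma \ref{lemma1} extends to such bounded, piecewise-continuous integrands. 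It is essential here to apply the bound globally on $[a,b]$ rather than to sum the three subinterval estimates: for $p>1$ the sum of the subinterval $p$-variations is not controlled by $\bigvee_a^b(f^{(n-1)};p)$, so keeping the single global supremum \eqref{eq2.14} paired with the single global quantity $\bigvee_a^b(f^{(n-1)};p)$ is exactly what delivers the clean constant in \eqref{eq2.15}. Once this bookkeeping is arranged, both \eqref{eq2.15} and \eqref{eq2.16} follow at once.
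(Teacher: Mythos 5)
Your proposal is correct and follows essentially the same route as the paper: take absolute values in the representation \eqref{eq2.8}, apply Lemma \ref{lemma1} with $w=S_n$ and $\nu=f^{(n-1)}$, substitute the supremum \eqref{eq2.14}, and invoke the differentiability property of $p$-variation for the ``moreover'' part. Your extra remark about the jump discontinuities of $S_n$ at $t=y$ and $t=z$ (which the paper's proof passes over in silence when invoking Lemma \ref{lemma1}, whose hypothesis requires a continuous integrand) is a welcome point of additional care rather than a deviation in method.
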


\begin{proof}
Since $f^{\left( {n - 1} \right)}$ is  of bounded $p$-variation
$(1\le p \le \infty)$ on $\left[ a,b \right]\subset I^{\circ}$,
$\forall n\ge1$, utilizing the triangle integral inequality on the
identity \eqref{eq2.8} and employing Lemma \ref{lemma1}, we get
\begin{align*}
&\left| {\left( { - 1} \right)^{n+1} \int_a^b {S_n \left(
{t;y,x,z} \right)df^{\left( {n - 1} \right)} \left( t \right)} }
\right|
\\
&\le \mathop {\sup }\limits_{t \in \left[ {a,b} \right]} \left|
{S_n \left( {t;y,x,z} \right)} \right| \cdot \bigvee_a^b \left(
{f^{\left( {n - 1} \right)} ,p} \right)
 \\
&\le \frac{1}{{n!}}\left[ {\max \left\{ {\left( {y - a}
\right),\left( {\frac{{z - y}}{2} + \left| {\frac{{y + z}}{2} - x}
\right|} \right),\left( {b - z} \right)} \right\}} \right]^n \cdot
\bigvee_a^b \left( {f^{\left( {n - 1} \right)} ,p} \right),
\end{align*}
for all $p \in \left[1,\infty\right]$ and all $ n\ge1$, which
completes the proof. The moreover case follows from the properties
of $p$-variation and this completes the proof.
\end{proof}

\begin{theorem}
\label{thm6}Let $I$ be a real interval such that $a,b$ in
$I^{\circ}$; the interior of $I$ with $a<b$. Let $f:I\to
\mathbb{R}$ be such that $f^{\left( {n - 1} \right)}$ is
absolutely continuous    on $\left[a,b\right]\subset I^{\circ}$,
$\forall n\ge1$. Then  we have
\begin{multline}
\label{eq2.17}\left| R_n \left( {f,S_n;y,x,z} \right) \right|
\\
\le\left\{ \begin{array}{l}
 \frac{1}{{n!}}\left[ {\max \left\{ {\left( {y - a} \right),\left( {\frac{{z - y}}{2} + \left| {\frac{{y + z}}{2} - x} \right|} \right),\left( {b - z} \right)} \right\}} \right]^n  \cdot \left\| {f^{\left( n \right)} } \right\|_1,\\  \,\,\,\,\,\,\,\,\,\,\,\,\,\,\,\,\,\,\,\,\,\,\,\,\,\,\,\,\,\,\,\,\,\,\,\,\,\,\,\,\,\,\,\,\,\,\,\,\,\,\,\,\,\,\,\,\,\,\,\,\,\,\,\,\,\,\,\,\,\,\,\,\,\,\,\,\,\,\,\,\,\,\,\,\,\,\,\,\,\,\,\,\,\,\,\,\,\,\,\,\,\,\,\,\,\,\,\,\,\,\,\,\,\,\,\,\,\,\,\,\,\,\,\,\,\,\,\,\,\,\,\,\,\,\,\,\,\,\,\,\,\,\,\,\text{\emph{If}}\,\,f^{\left( n \right)}  \in L^1 \left( {\left[ {a,b} \right]} \right) \\
 \\
 \frac{1}{{\left( {nq + 1} \right)^{\frac{1}{q}} n!}}\left[ {\left( {y - a} \right)^{nq + 1}  + \left( {x - y} \right)^{nq + 1}  + \left( {z - x} \right)^{nq + 1}  + \left( {b - z} \right)^{nq + 1} } \right]^{\frac{1}{q}}  \cdot \left\| {f^{\left( n \right)} } \right\|_p,\\\,\,\,\,\,\,\,\,\,\,\,\,\,\,\,\,\,\,\,\,\,\,\,\,\,\,\,\,\,\,\,\,\,\,\,\,\,\,\,\,\,\,\,\,\,\,\,\,\,\,\,\,\,\,\,\,\,\,\,\,\,\,\,\,\,\,\,\,\,\,\,\,\,\,\,\,\,\,\,\,\,\,\,\,\,\,\,\,\,\,\,\,\,\,\,\,\,\,\,\,\,\,\,\,\,\,\,\,\,\,\,\,\,\,\,\,\,\,\,\,\,\,\,\,\,\,\,\,\,\,\,\,\,\,\,\,\,\,\,\,\,\,\,\text{\emph{If}}\,\,f^{\left( n \right)}  \in L^p \left( {\left[ {a,b} \right]} \right) \\
 \\
 \frac{1}{{\left( {n + 1} \right)!}}\left[ {\left( {y - a} \right)^{n + 1}  + \left( {x - y} \right)^{n + 1}  + \left( {z - x} \right)^{n + 1}  + \left( {b - z} \right)^{n + 1} } \right] \cdot \left\| {f^{\left( n \right)} } \right\|_\infty,\\\,\,\,\,\,\,\,\,\,\,\,\,\,\,\,\,\,\,\,\,\,\,\,\,\,\,\,\,\,\,\,\,\,\,\,\,\,\,\,\,\,\,\,\,\,\,\,\,\,\,\,\,\,\,\,\,\,\,\,\,\,\,\,\,\,\,\,\,\,\,\,\,\,\,\,\,\,\,\,\,\,\,\,\,\,\,\,\,\,\,\,\,\,\,\,\,\,\,\,\,\,\,\,\,\,\,\,\,\,\,\,\,\,\,\,\,\,\,\,\,\,\,\,\,\,\,\,\,\,\,\,\,\,\,\,\,\,\,\,\,\,\,\,\text{\emph{If}}\,\,f^{\left( n \right)}  \in L^\infty  \left( {\left[ {a,b} \right]} \right) \\
 \end{array} \right.,
\end{multline}
where $p>1$ with $\frac{1}{p}+\frac{1}{q}=1$. The inequality is
the best possible for $p = 1$, and sharp for $1 < p \le \infty$.
The equality is attained for every function $f\left( t \right) =
Mf_0 \left( t \right) + p_{n - 1} \left( t \right)$, $t\in
\left[a,b\right]$, where $M$ is real constant and $p_{n-1}$ is an
arbitrary polynomial of degree at most $n-1$, and $f_0$ is a
function defined on $\left[a,b\right]$ given by
\begin{align*}
f_0 \left( t \right) = \int_a^t {\frac{{\left( {t - u} \right)^{n
- 1} }}{{\left( {n - 1} \right)!}}{\mathop{\rm sgn}} S_n \left(
{u;y,x,z} \right) \cdot \left| {S_n \left( {u;y,x,z} \right)}
\right|^{\frac{1}{{p - 1}}} du}, \qquad t\in [a,b]
\end{align*}
for $1 < p < \infty$, and
\begin{align*}
f_0 \left( t \right) = \int_a^t {\frac{{\left( {t - u} \right)^{n
- 1} }}{{\left( {n - 1} \right)!}}{\mathop{\rm sgn}} S_n \left(
{u;y,x,z} \right) du}
\end{align*}
for $p=\infty$.
\end{theorem}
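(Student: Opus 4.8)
The plan is to start from the remainder identity \eqref{eq2.8}. Since $f^{\left(n-1\right)}$ is absolutely continuous on $[a,b]$, the Riemann--Stieltjes differential $df^{\left(n-1\right)}(t)$ may be replaced by $f^{\left(n\right)}(t)\,dt$, so that
\[
R_n\left(f,S_n;y,x,z\right) = \left(-1\right)^{n+1}\int_a^b S_n\left(t;y,x,z\right) f^{\left(n\right)}\left(t\right)\,dt.
\]
Taking absolute values and applying the three standard forms of H\"{o}lder's inequality to this single integral produces the three cases simultaneously. For $f^{\left(n\right)}\in L^1$ I would bound $\left|\int_a^b S_n f^{\left(n\right)}\,dt\right| \le \left(\sup_{t}\left|S_n\right|\right)\left\|f^{\left(n\right)}\right\|_1$ and insert the value of $\sup_{t}\left|S_n\right|$ from \eqref{eq2.14}; for $f^{\left(n\right)}\in L^p$ with $1<p<\infty$ I would use $\left|\int_a^b S_n f^{\left(n\right)}\right| \le \left\|S_n\right\|_q\left\|f^{\left(n\right)}\right\|_p$ and insert $\int_a^b\left|S_n\right|^q\,dt$ from \eqref{eq2.13}; for $f^{\left(n\right)}\in L^\infty$ I would use $\left|\int_a^b S_n f^{\left(n\right)}\right|\le \left\|S_n\right\|_1\left\|f^{\left(n\right)}\right\|_\infty$ and insert $\int_a^b\left|S_n\right|\,dt$ from \eqref{eq2.12}. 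Each of these is a one-line substitution, so the three bounds are routine once Lemma \ref{lemma2} is in hand.

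The delicate part is the sharpness. For $1<p<\infty$ I must confirm that equality in H\"{o}lder is realized by the stated $f_0$. The kernel $\frac{\left(t-u\right)^{n-1}}{\left(n-1\right)!}$ is exactly the $n$-fold integration kernel, so differentiating $f_0$ a total of $n$ times gives $f_0^{\left(n\right)}\left(t\right) = \operatorname{sgn}S_n\left(t\right)\,\left|S_n\left(t\right)\right|^{1/\left(p-1\right)}$. The crucial algebraic fact is that $\frac{1}{p}+\frac{1}{q}=1$ forces $q-1 = \frac{1}{p-1}$, whence $\left|f_0^{\left(n\right)}\right|^p$ is proportional to $\left|S_n\right|^{p/\left(p-1\right)} = \left|S_n\right|^q$, which is precisely the H\"{o}lder equality condition; moreover $S_n f_0^{\left(n\right)} = \left|S_n\right|^q \ge 0$, so no cancellation occurs in the integral and the bound becomes an equality. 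Adding any polynomial $p_{n-1}$ of degree at most $n-1$ annihilates the $n$-th derivative and hence leaves $R_n$ unchanged, while scaling by $M$ scales both sides equally; this explains the extremal family $f = Mf_0 + p_{n-1}$. The case $p=\infty$ ($q=1$) is the same argument with $f_0^{\left(n\right)} = \operatorname{sgn}S_n$ and $\int_a^b S_n f_0^{\left(n\right)}\,dt = \left\|S_n\right\|_1$.

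For $p=1$ the bound cannot be attained by an admissible function, so I would argue that it is best possible by a concentration argument: choose a sequence $\left(f_k\right)$ whose $n$-th derivatives are nonnegative approximate identities of unit $L^1$-mass concentrating at a point $t_*$ where $\left|S_n\right|$ attains its maximum (such a point exists because $S_n$ is continuous). Then $\int_a^b S_n f_k^{\left(n\right)}\,dt \to \sup_{t}\left|S_n\right|$ while $\left\|f_k^{\left(n\right)}\right\|_1 = 1$, which shows the constant $\frac{1}{n!}\left[\max\left\{\cdots\right\}\right]^n$ cannot be lowered.

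The main obstacle I anticipate is not the inequality but the bookkeeping in the sharpness step. One must verify that the indicated $f_0$ genuinely lies in the admissible class, i.e.\ that it is $n$-times differentiable with absolutely continuous $\left(n-1\right)$-st derivative, and that the exponent identity $q-1 = \frac{1}{p-1}$ is applied correctly when matching $\left|f_0^{\left(n\right)}\right|$ to the H\"{o}lder extremal $\left|S_n\right|^{q-1}$. Getting the sign factor and the power $\frac{1}{p-1}$ exactly right, and confirming that $S_n f_0^{\left(n\right)}$ does not change sign, is where any error would creep in.
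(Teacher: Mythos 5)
Your proposal is correct and follows essentially the same route as the paper: the three bounds come from the sup--$L^1$, H\"older, and $L^1$--sup pairings applied to $\int_a^b S_n f^{(n)}\,dt$ together with the computations of Lemma \ref{lemma2}, the extremal function for $1<p\le\infty$ is verified via the equality condition $S_n f_0^{(n)}=|S_n|^q$ (the paper phrases this as $f_0$ solving that differential equation), and your approximate-identity argument for $p=1$ is exactly the paper's construction, since a ramp $f_\epsilon^{(n-1)}$ of width $\epsilon$ has $f_\epsilon^{(n)}=\epsilon^{-1}\chi_{[t_0-\epsilon,t_0]}$, a nonnegative unit-mass concentration at the maximizer of $|S_n|$. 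No gaps.
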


\begin{proof}
For $p=1$. Utilizing the triangle integral inequality on the
identity \eqref{eq2.8} then we have
\begin{align*}
&\left| {\left( { - 1} \right)^{n+1} \int_a^b {S_n \left(
{t;y,x,z} \right)f^{\left( n \right)} \left( t \right)dt} }
\right|
\\
&\le \mathop {\sup }\limits_{t \in \left[ {a,b} \right]} \left|
{S_n \left( {t;y,x,z} \right)} \right| \cdot \int_a^b {\left|
{f^{\left( n \right)} \left( t \right)} \right|dt}
\\
&= \frac{1}{{n!}}\left[ {\max \left\{ {\left( {y - a}
\right),\left( {\frac{{z - y}}{2} + \left| {\frac{{y + z}}{2} - x}
\right|} \right),\left( {b - z} \right)} \right\}} \right]^n \cdot
\left\| {f^{\left( n \right)} } \right\|_1.
\end{align*}
For $1 < p< \infty$, applying the H\"{o}lder integral inequality
on the identity \eqref{eq2.8} we get
\begin{align*}
&\left| {\left( { - 1} \right)^{n+1} \int_a^b {S_n \left( {t,y,z}
\right)f^{\left( n \right)} \left( t \right)dt} } \right| \\&\le
\left( {\int_a^b {\left| {f^{\left( n \right)} \left( t \right)}
\right|^p dt} } \right)^{\frac{1}{p}} \left( {\int_a^b {\left|
{S_n \left( {t;y,x,z} \right)} \right|^q dt} }
\right)^{\frac{1}{q}}
\\
&= \frac{1}{{\left( {nq + 1} \right)^{\frac{1}{q}} n!}}\left[
{\left( {y - a} \right)^{nq + 1}  + \left( {x - y} \right)^{nq +
1}  + \left( {z - x} \right)^{nq + 1}  + \left( {b - z}
\right)^{nq + 1} } \right]^{\frac{1}{q}}\cdot \left\| {f^{\left( n
\right)} } \right\|_p.
\end{align*}
For  $p=\infty$, we have
\begin{align*}
&\left| {\left( { - 1} \right)^{n+1} \int_a^b {S_n \left(
{t;y,x,z} \right)f^{\left( n \right)} \left( t \right)dt} }
\right| \\&\le \mathop {\sup }\limits_{t \in \left[ {a,b} \right]}
\left| {f^{\left( n \right)} \left( t \right)} \right| \cdot
\int_a^b {\left| {S_n \left( {t,y,z} \right)} \right|dt}
\\
&= \frac{1}{{\left( {n + 1} \right)!}}\left[ {\left( {y - a}
\right)^{n + 1}  + \left( {x - y} \right)^{n + 1}  + \left( {z -
x} \right)^{n + 1}  + \left( {b - z} \right)^{n + 1} }
\right]\cdot \left\| {f^{\left( n \right)} } \right\|_\infty,
\end{align*}
which proves the desired result. In order to prove the sharpness,
for $p=1$ we need to prove the first inequality in \eqref{eq2.17}
is the best possible. Suppose $\left| {S_n \left( {t;y,x,z}
\right)} \right|$ attains its supremum at point $t_0 \in \left[a,
b\right]$ and let $\mathop {\sup }\limits_{t \in \left[ {a,b}
\right]} \left| {S_n \left( {t;y,x,z} \right)} \right| = S_n
\left( {t_0,y,x,z} \right)$ for some $k = 1, 2, 3$.

Let
$\mathcal{A}_-=:\left\{\left(a,y\right],\left(y,z\right],\left(z,b\right]\right\}$
and assume that $S_n \left( {t_0,y,x,z} \right)>0$. For $\epsilon$
small enough define $f_{\epsilon}^{\left( {n-1} \right)} \left(
{t} \right)$ by
\begin{align*}
f_{\epsilon}^{\left( n-1 \right)} \left( t \right) = \left\{
\begin{array}{l}
0,\,\,\,\,\,\,\,\,\,\,\,\,\,\, \,\,\,\,\,\,t \le t_0 -\epsilon\\
 \frac{t - t_0 +\epsilon}{\epsilon},\,\,\,\,\,\,t \in \left[ {t_0  - \varepsilon ,t_0 } \right] \\
 1,\,\,\,\,\,\,\,\,\,\,\,\,\,\,\,\,\,\,\,\,t>t_0 \\
 \end{array} \right..
\end{align*}
 If $t_0 \in \left(c,d\right] \in \mathcal{A}_-$. Then, for $\epsilon$ small enough,
\begin{align*}
\left| {\int_a^b {S_n\left( {t;y,x,z} \right)f_{\epsilon}^{\left(
n \right)} \left( t \right)dt} } \right| = \frac{1}{\epsilon
}\left| {\int_{t_0 - \epsilon }^{t_0 } {S_n \left( {t;y,x,z}
\right)dt} } \right| &\le \frac{1}{\epsilon }\int_{t_0 - \epsilon
}^{t_0 } { S_n \left( {t;y,x,z} \right) dt}
\\
&\le \mathop {\sup }\limits_{t_0  - \epsilon  \le t \le t_0 }
\left| {S_n\left( {t;y,x,z} \right)} \right| \cdot
\frac{1}{\epsilon }\int_{t_0  - \epsilon }^{t_0 } {dt}
\\
&= S_n \left( {t_0,y,x,z} \right),
\end{align*}
also since $\mathop {\lim }\limits_{\epsilon \to 0}
\frac{1}{\epsilon}\int_{t_0 - \epsilon}^{t_0 } {S_n \left(
{t;y,x,z} \right)dt}  = S_n \left( {t_0,y,x,z} \right)$, the
statement follows.

For $t_0=c\in \left\{a,y,z\right\}$, then we define, for $\epsilon
> 0$ small enough, function $f_{\epsilon}^{\left( {n-1} \right)} \left( {t} \right)$ by
\begin{align*}
f_{\epsilon}^{\left( n-1 \right)} \left( t \right) = \left\{
\begin{array}{l}
0,\,\,\,\,\,\,\,\,\,\,\,\,\,\, \,\,\,\,\,\,t \le t_0\\
 \frac{t - t_0 }{\epsilon},\,\,\,\,\,\,t \in \left[ {t_0,t_0+\epsilon } \right] \\
 1,\,\,\,\,\,\,\,\,\,\,\,\,\,\,\,\,\,\,\,\,t>t_0+\epsilon \\
 \end{array} \right.,
\end{align*}
and we continue as above. In similar manner we can show the
sharpness holds when $S_n \left( {t_0,y,x,z} \right)<0$.

Finally, for $1<p\le \infty$, the function $f_0\left(t\right)$
given in the Theorem \ref{thm6} is $n$-times differentiable, and
its $n$-th derivative is piecewise continuous function. Further,
$f_0$ is a solution of the differential equation
\begin{align*}
S_n \left( {t;y,x,z} \right)f^{\left( n \right)} \left( t \right)
= \left| {S_n \left( {t;y,x,z} \right)} \right|^q, \qquad q=
\frac{p}{p-1}, \forall p>1,
\end{align*}
so the equality in \eqref{eq2.17}  holds for $1<p\le \infty$.
\end{proof}

To treat bounds for functions possess H\"{o}lder continuity of
order $r \in \left(0,1\right]$. Let $t_0\in \left[a,b\right]$ be
fixed point. From \eqref{eq2.8}, we have
\begin{align}
&R_n\left( f,S_n;y,x,z \right)
\nonumber\\
&=\left( { - 1} \right)^{n+1} \int_a^b {S_n \left( {t;y,x,z}
\right)df^{\left( {n-1} \right)} \left( t \right)}
\nonumber\\
&=\left( { - 1} \right)^{n+1} \int_a^b {S_n \left( {t;y,x,z}
\right)d\left[{f^{\left( {n-1} \right)} \left( t \right)-f^{\left(
{n-1} \right)} \left( t_0 \right) }\right]}
\nonumber\\
&= \frac{\left( { - 1} \right)^{n}}{n!}\left( {y-a}
\right)^n\left[{f^{\left( {n-1} \right)} \left( y
\right)-f^{\left( {n-1} \right)} \left( t_0\right)
}\right]+\frac{\left( { - 1} \right)^{n}}{n!}\left( {z-x}
\right)^n\left[{f^{\left( {n-1} \right)} \left( z
\right)-f^{\left( {n-1} \right)} \left( t_0\right) }\right]
\nonumber\\
&\qquad-\frac{\left( { - 1} \right)^{n}}{n!}\left( {y-x}
\right)^n\left[{f^{\left( {n-1} \right)} \left( y
\right)-f^{\left( {n-1} \right)} \left( t_0\right)
}\right]-\frac{\left( { - 1} \right)^{n}}{n!}\left( {z-b}
\right)^n\left[{f^{\left( {n-1} \right)} \left( z
\right)-f^{\left( {n-1} \right)} \left( t_0\right) }\right]
\nonumber\\
&\qquad+ \left( { - 1} \right)^{n} \int_a^b {\left[{f^{\left(
{n-1} \right)} \left( t \right)-f^{\left( {n-1} \right)} \left(
t_0\right) }\right]dS_{n} \left( {t;y,x,z} \right)}
\nonumber\\
&=   \frac{\left( { - 1} \right)^{n}}{n!}\left[{\left( {y-a}
\right)^n -\left( {y-x} \right)^n}\right]\left[{f^{\left( {n-1}
\right)} \left( y \right)-f^{\left( {n-1} \right)} \left(
t_0\right) }\right]\label{eq2.18}
 \\
&\qquad+\frac{\left( { - 1} \right)^{n}}{n!}\left[{\left( {z-x}
\right)^n-\left( {z-b} \right)^n }\right]\left[{f^{\left( {n-1}
\right)} \left( z \right)-f^{\left( {n-1} \right)} \left(
t_0\right) }\right]
\nonumber\\
&\qquad+ \left( { - 1} \right)^{n} \int_a^b {\left[{f^{\left(
{n-1} \right)} \left( t \right)-f^{\left( {n-1} \right)} \left(
t_0\right) }\right]dS_{n} \left( {t;y,x,z} \right)}.\nonumber
\end{align}
Now, let us setting
\begin{align}
\label{eq2.19} \widetilde{R}_n\left( f,S_n;y,x,z \right)= \left( {
- 1} \right)^{n} \int_a^b {\left[{f^{\left( {n-1} \right)} \left(
t \right)-f^{\left( {n-1} \right)} \left( t_0\right)
}\right]dS_{n} \left( {t;y,x,z} \right)}.
\end{align}
\begin{theorem}
\label{thm7} Let $I$ be a real interval such that $a,b$ in
    $I^{\circ}$; the interior of $I$ with $a<b$. Let $f:I\to
    \mathbb{R}$ be such that $f^{\left( {n - 1} \right)}$ satisfy the
    H\"{o}lder condition with exponent $r\in \left(0,1\right]$ and
    constant $H>0$ on $\left[a,b\right]\subset I^{\circ}$, $\forall n\ge1$,
    then we have
    \begin{align}
\label{eq2.20} \left|\widetilde{R}_n\left( f,S_n;y,x,z
\right)\right|\le
     H
    \left\{ \begin{array}{l}
    \frac{\left({b-t_0}\right)^{r+1}+\left({t_0-a}\right)^{r+1}}{r+1} \cdot \left\| {S_{n - 1} \left( {\cdot;y,x,z} \right)} \right\|_{\infty}
    \\
    \\
    \left(\frac{\left({b-t_0}\right)^{pr+1}+\left({t_0-a}\right)^{pr+1}}{pr+1}\right)^{1/p}\cdot \left\| {S_{n - 1} \left( {\cdot;y,x,z} \right)} \right\|_{q}
    \\
    \\
    \left[ {\frac{{b - a}}{2} + \left| {t_0  - \frac{{a + b}}{2}} \right|} \right]^r \cdot \left\| {S_{n - 1} \left( {\cdot;y,x,z} \right)} \right\|_{1}\\
    \end{array} \right.
    \end{align}
    for all  $x\in \left[a,b\right]$ and $p>1$ with
    $\frac{1}{p}+\frac{1}{q}=1$.
\end{theorem}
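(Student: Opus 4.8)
The plan is to work directly from the representation \eqref{eq2.19}, reading the Stieltjes differential $dS_n(t;y,x,z)$ as its absolutely continuous density $S_{n-1}(t;y,x,z)\,dt$: away from the two nodes $y$ and $z$ one has $\frac{\partial}{\partial t}S_n(t;y,x,z)=S_{n-1}(t;y,x,z)$, while the atomic contributions carried by the jumps of $S_n$ at $y$ and $z$ have already been split off into the boundary terms displayed in \eqref{eq2.18}. Hence the quantity to be bounded is
\begin{align*}
\widetilde{R}_n\left(f,S_n;y,x,z\right)=\left(-1\right)^n\int_a^b\left[f^{\left(n-1\right)}\left(t\right)-f^{\left(n-1\right)}\left(t_0\right)\right]S_{n-1}\left(t;y,x,z\right)\,dt.
\end{align*}
I would first pass to absolute values under the integral sign and invoke the H\"{o}lder hypothesis $\left|f^{(n-1)}(t)-f^{(n-1)}(t_0)\right|\le H\left|t-t_0\right|^r$, producing the single master estimate
\begin{align*}
\left|\widetilde{R}_n\left(f,S_n;y,x,z\right)\right|\le H\int_a^b\left|t-t_0\right|^r\left|S_{n-1}\left(t;y,x,z\right)\right|\,dt.
\end{align*}

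All three inequalities in \eqref{eq2.20} are then extracted from this one line by splitting the product $\left|t-t_0\right|^r\left|S_{n-1}\right|$ in the three conjugate ways. For the first bound I would factor out $\left\|S_{n-1}(\cdot;y,x,z)\right\|_\infty$ and evaluate $\int_a^b\left|t-t_0\right|^r\,dt$ by cutting the interval at $t_0$; each half integrates to a power, giving $\frac{(t_0-a)^{r+1}+(b-t_0)^{r+1}}{r+1}$. For the middle bound I would instead apply the H\"{o}lder integral inequality with exponents $p,q$, so that $\int_a^b\left|t-t_0\right|^r\left|S_{n-1}\right|\,dt\le\left(\int_a^b\left|t-t_0\right|^{pr}\,dt\right)^{1/p}\left\|S_{n-1}(\cdot;y,x,z)\right\|_q$, and then compute $\int_a^b\left|t-t_0\right|^{pr}\,dt=\frac{(t_0-a)^{pr+1}+(b-t_0)^{pr+1}}{pr+1}$ in the same fashion. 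For the last bound I would pull $\left|t-t_0\right|^r$ out in the supremum norm, leaving the factor $\left\|S_{n-1}(\cdot;y,x,z)\right\|_1$, and estimate $\max_{t\in[a,b]}\left|t-t_0\right|=\max\{t_0-a,\,b-t_0\}$ through the identity $\max\{u,v\}=\frac{u+v}{2}+\left|\frac{u-v}{2}\right|$ taken with $u=b-t_0$ and $v=t_0-a$, which collapses exactly to $\frac{b-a}{2}+\left|t_0-\frac{a+b}{2}\right|$.

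The computations are entirely routine, so I anticipate no genuine obstacle; the only two points demanding care are the correct reading of the differential $dS_n$ as the continuous density $S_{n-1}\,dt$ (so that the right-hand sides involve only the norms of $S_{n-1}$ and not extra jump contributions at $y$ and $z$), and the easily mis-signed $\max$ identity driving the third case. I would also note that the three estimates are nothing but the conjugate pairings $(1,\infty)$, $(p,q)$ and $(\infty,1)$ applied to the pair $(\,\left|t-t_0\right|^r,\,S_{n-1}\,)$, which explains why precisely the $\infty$-, $q$- and $1$-norms of $S_{n-1}$ appear on the right of \eqref{eq2.20}.
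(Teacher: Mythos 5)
Your proposal is correct and follows essentially the same route as the paper: reduce to the single master bound $H\int_a^b \left|t-t_0\right|^r\left|S_{n-1}(t;y,x,z)\right|dt$ and then split the integrand via the three conjugate pairings $(\infty,1)$, $(q,p)$ and $(1,\infty)$, with the same elementary evaluations of $\int_a^b\left|t-t_0\right|^{r}dt$, $\int_a^b\left|t-t_0\right|^{pr}dt$ and $\sup_t\left|t-t_0\right|^r$. Your explicit remark that $dS_n$ must be read as the density $S_{n-1}\,dt$ (the jumps at $y$ and $z$ having been absorbed into the boundary terms of \eqref{eq2.18}) is a point the paper passes over silently, and it is the correct reading.
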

\begin{proof}
Since  $f^{(n-1)}$ is $r$--H\"{o}lder continuous on
$\left[a,b\right]$, then
\begin{align*}
&\left|\widetilde{R}_n\left( f,S_n;y,x,z \right)\right|
\\
&= \left|\left( { - 1} \right)^{n} \int_a^b {\left[{f^{\left(
{n-1}\right)} \left( t \right)-f^{\left( {n-1} \right)} \left(
t_0\right) }\right]dS_{n} \left( {t;y,x,z} \right)}\right|
\\
&\le  \int_a^b {\left|{f^{\left( {n-1}\right)} \left( t
\right)-f^{\left( {n-1} \right)} \left(t_0\right)
}\right|\left|S_{n-1} \left( {t;y,x,z} \right) \right|dt}
\\
&\le H\int_a^b {\left| {t - t_0 } \right|^r \left| {S_{n - 1}
\left( {t;y,x,z} \right)} \right|dt}
\\
&\le H\cdot \left\{ \begin{array}{l}
\mathop {\sup }\limits_{t \in \left[ {a,b} \right]} \left\{ {\left| {S_{n - 1} \left( {t;y,x,z} \right)} \right|} \right\}\int_a^b {\left| {t - t_0 } \right|^r dt}  \\
\\
\left( {\int_a^b {\left| {t - t_0 } \right|^{rp} dt} } \right)^{1/p} \left( {\int_a^b {\left| {S_{n - 1} \left( {t;y,x,z} \right)} \right|^q dt} } \right)^{1/q}  \\
\\
\mathop {\sup }\limits_{t \in \left[ {a,b} \right]} \left\{ {\left| {t - t_0 } \right|^r } \right\}\int_a^b {\left| {S_{n - 1} \left( {t;y,x,z} \right)} \right|dt}  \\
\end{array} \right.
\\
&= H \left\{ \begin{array}{l}
 \frac{\left({b-t_0}\right)^{r+1}+\left({t_0-a}\right)^{r+1}}{r+1} \cdot \left\| {S_{n - 1} \left( {\cdot;y,x,z} \right)} \right\|_{\infty}
\\
\\
  \left(\frac{\left({b-t_0}\right)^{pr+1}+\left({t_0-a}\right)^{pr+1}}{pr+1}\right)^{1/p}\cdot \left\| {S_{n - 1} \left( {\cdot;y,x,z} \right)} \right\|_{q}
\\
\\
\left[ {\frac{{b - a}}{2} + \left| {t_0  - \frac{{a + b}}{2}} \right|} \right]^r \cdot \left\| {S_{n - 1} \left( {\cdot;y,x,z} \right)} \right\|_{1}\\
\end{array} \right.
\end{align*}
for all $n\ge1$ and for every $a\le y \le x \le z \le b$ with all
$t_0\in\left[a,b\right]$.
\end{proof}

\begin{remark}
In very interesting case, one may choose $t_0=x$ in
\eqref{eq2.18}--\eqref{eq2.19}, so that \eqref{eq2.20} becomes
    \begin{align}
\label{eq2.21}\left|\widetilde{R}_n\left( f,S_n;y,x,z
\right)\right|\le H \left\{ \begin{array}{l}
\frac{\left({b-x}\right)^{r+1}+\left({ x-a}\right)^{r+1}}{r+1}
\cdot \left\| {S_{n - 1} \left( {\cdot;y,x,z} \right)}
\right\|_{\infty}
\\
\\
\left(\frac{\left({b-x}\right)^{pr+1}+\left({x-a}\right)^{pr+1}}{pr+1}\right)^{1/p}\cdot
\left\| {S_{n - 1} \left( {\cdot;y,x,z} \right)} \right\|_{q}
\\
\\
\left[ {\frac{{b - a}}{2} + \left| {x  - \frac{{a + b}}{2}} \right|} \right]^r \cdot \left\| {S_{n - 1} \left( {\cdot;y,x,z} \right)} \right\|_{1}\\
\end{array} \right.
\end{align}
for all  $a \le y \le x \le z \le b$,  $r\in \left( 0,1\right]$,
and all $p>1$ with $\frac{1}{p}+\frac{1}{q}=1$.
\end{remark}

\begin{corollary}
Let $I$ be a real interval such that $a,b$ in $I^{\circ}$; the
interior of $I$ with $a<b$. Let $f:I\to \mathbb{R}$ be such that
$f^{\left( {n - 1} \right)}$ satisfy the H\"{o}lder condition with
exponent $r\in \left(0,1\right]$ and constant $H>0$ on
$\left[a,b\right]\subset I^{\circ}$, $\forall n\ge1$, then we have
\begin{multline}
\label{eq2.22}\left|\widetilde{R}_n\left(
f,S_n;h,\frac{a+b}{2},a+b-h \right)\right|
\\
\le H   \left\{ \begin{array}{l} \frac{
\left({b-a}\right)^{r+1}}{2^r(r+1)} \cdot \left\| {S_{n - 1}
\left( {\cdot;h,\frac{a+b}{2},a+b-h} \right)} \right\|_{\infty}
\\
\\
\left(\frac{\left({b-a}\right)^{pr+1}}{2^{pr+1}(pr+1)}\right)^{1/p}\cdot
\left\| {S_{n - 1} \left( {\cdot;h,\frac{a+b}{2},a+b-h} \right)}
\right\|_{q}
\\
\\
\left( {\frac{{b - a}}{2}} \right)^r \cdot \left\| {S_{n - 1} \left( {\cdot;h,\frac{a+b}{2},a+b-h} \right)} \right\|_{1}\\
\end{array} \right.
\end{multline}
for all  $h\in \left[a,\frac{a+b}{2}\right]$,  $r\in \left(
0,1\right]$, and all $p>1$ with $\frac{1}{p}+\frac{1}{q}=1$.

Also, we have
\begin{align}
\label{eq2.23}\left|\widetilde{R}_n\left( f,S_n;a,x,b
\right)\right|\le H \left\{ \begin{array}{l}
\frac{\left({b-x}\right)^{r+1}+\left({ x-a}\right)^{r+1}}{r+1}
\cdot \left\| {S_{n - 1} \left( {\cdot;a,x,b} \right)}
\right\|_{\infty}
\\
\\
\left(\frac{\left({b-x}\right)^{pr+1}+\left({x-a}\right)^{pr+1}}{pr+1}\right)^{1/p}\cdot
\left\| {S_{n - 1} \left( {\cdot;a,x,b} \right)} \right\|_{q}
\\
\\
\left[ {\frac{{b - a}}{2} + \left| {x  - \frac{{a + b}}{2}} \right|} \right]^r \cdot \left\| {S_{n - 1} \left( {\cdot;a,x,b} \right)} \right\|_{1}\\
\end{array} \right.
\end{align}
for all  $x\in \left[a,b\right]$,  $r\in \left( 0,1\right]$, and
all $p>1$ with $\frac{1}{p}+\frac{1}{q}=1$. Similarly, for
$\left|\widetilde{R}_n\left( f,S_n;x,x,x \right)\right|$ the bound
in \eqref{eq2.23} holds.
\end{corollary}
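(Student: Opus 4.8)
The plan is to obtain all three estimates as direct specializations of Theorem \ref{thm7}, or more precisely of its convenient form \eqref{eq2.21} in which the base point is taken to be $t_0 = x$. Since \eqref{eq2.21} is valid for every admissible triple $a \le y \le x \le z \le b$, and since the three coefficient factors appearing there depend only on $a$, $b$, $x$, $r$ and $p$ (and not on $y$ or $z$), while the dependence on $y$ and $z$ is confined entirely to the kernel norms $\left\| S_{n-1}(\cdot;y,x,z) \right\|$, each claimed inequality will follow by inserting the relevant choice of $(y,z)$ and simplifying.

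First I would establish \eqref{eq2.22}. Here I set $y = h$, $x = \frac{a+b}{2}$ and $z = a+b-h$ with $h \in \left[a, \frac{a+b}{2}\right]$, so that the ordering $a \le y \le x \le z \le b$ holds and \eqref{eq2.21} applies. Substituting $x = \frac{a+b}{2}$ I use the symmetry $b - x = x - a = \frac{b-a}{2}$ to collapse the two summands, e.g.
\begin{align*}
(b-x)^{r+1} + (x-a)^{r+1} = 2\left(\frac{b-a}{2}\right)^{r+1} = \frac{(b-a)^{r+1}}{2^r},
\end{align*}
together with the analogous identity for the exponent $pr+1$; moreover the middle factor $\left[\frac{b-a}{2} + \left|x - \frac{a+b}{2}\right|\right]^r$ reduces to $\left(\frac{b-a}{2}\right)^r$ because $x - \frac{a+b}{2} = 0$. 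Rewriting the kernel argument as $\left(\cdot; h, \frac{a+b}{2}, a+b-h\right)$ then yields \eqref{eq2.22}.

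For \eqref{eq2.23} I would simply take $y = a$ and $z = b$ in \eqref{eq2.21}. Since the three coefficient factors there are already expressed through $a$, $b$ and $x$ only, they are left unchanged, and the result is precisely \eqref{eq2.23} with the kernel norms evaluated at $(\cdot; a, x, b)$. The final assertion, concerning $\widetilde{R}_n(f,S_n;x,x,x)$, follows in the same manner by putting $y = z = x$: once more the coefficient factors are untouched, and only the kernel specializes (the middle branch of $S_{n-1}$ disappears, collapsing it to the one-point Fink-type kernel supported on $[a,x]\cup[x,b]$), so that the bound keeps exactly the shape displayed in \eqref{eq2.23}.

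I do not anticipate a genuine obstacle, since the corollary is purely a matter of specializing the free parameters $y$, $z$ and the base point $t_0$ in an already-established estimate. The only points requiring care are bookkeeping ones: verifying that each chosen triple respects the ordering $a \le y \le x \le z \le b$ so that Theorem \ref{thm7} is applicable, and carrying out the arithmetic in the symmetric case accurately, in particular tracking the powers of $2$ that arise in the $L^q$ term. Everything else is a direct transcription.
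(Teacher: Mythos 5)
Your proposal is correct and is exactly the route the paper intends: the corollary is stated without proof as an immediate specialization of \eqref{eq2.21} (Theorem \ref{thm7} with $t_0=x$), obtained by choosing $(y,x,z)=(h,\tfrac{a+b}{2},a+b-h)$, $(a,x,b)$ and $(x,x,x)$ respectively, with only the kernel norms and the symmetric simplifications to track. One small point worth flagging, since you promised to watch the powers of $2$: in the symmetric case the $L^q$ line actually gives $(b-x)^{pr+1}+(x-a)^{pr+1}=2\left(\tfrac{b-a}{2}\right)^{pr+1}=\tfrac{(b-a)^{pr+1}}{2^{pr}}$, so the denominator in the middle branch of \eqref{eq2.22} should read $2^{pr}(pr+1)$ rather than $2^{pr+1}(pr+1)$ (consistent with the first branch, which correctly carries $2^{r}$); the displayed constant appears to be a typographical slip in the statement, not a defect of your argument.
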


Clearly, $\widetilde{R}_n\left( f,S_n;y,x,z \right)=R_n\left(
f,S_n;y,x,z \right)$, when $y=z=x=t_0$. Therefore we may state the
following bound for mappings $f^{\left( {n - 1} \right)}$ satisfy
the H\"{o}lder condition.
\begin{corollary}
Let $I$ be a real interval such that $a,b$ in $I^{\circ}$; the
interior of $I$ with $a<b$. Let $f:I\to \mathbb{R}$ be such that
$f^{\left( {n - 1} \right)}$ satisfy the H\"{o}lder condition with
exponent $r\in \left(0,1\right]$ and constant $K>0$ on
$\left[a,b\right]\subset I$, $\forall n\ge1$, then we have
\begin{align*}
\left|{R_n\left( f,S_n;x,x,x\right)} \right| \le H \frac{{\Gamma
\left( {1 + r} \right)}}{{\Gamma \left( {1 + n + r} \right)}}
 \left[ {\left( {x - a} \right)^{r + n}  + \left( {b - x}
\right)^{r  + n} } \right],
\end{align*}
for all $x \in \left[a,b\right]$ and $r \in \left(0,1\right]$.
Moreover, if $f^{\left(n-1\right)}$ satisfies the Lipschitz
condition with constant $L$, i.e., $r=1$, then we have
\begin{align*}
 \left|{R_n\left( f,S_n;x,x,x\right)} \right|
\le  \frac{L}{{\left( {n+1} \right)!}} \left[ {\left( {x - a}
\right)^{ n+1}  + \left( {b - x} \right)^{n+1} } \right].
\end{align*}
\end{corollary}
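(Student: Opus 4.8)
The plan is to specialize the Hölder-type estimate already established for $\widetilde R_n$ to the totally coincident configuration $y=z=x=t_0$ and then to evaluate the resulting integral in closed form. The decisive preliminary remark, recorded immediately above the statement, is that in \eqref{eq2.18} the two boundary coefficients multiply the differences $f^{(n-1)}(y)-f^{(n-1)}(t_0)$ and $f^{(n-1)}(z)-f^{(n-1)}(t_0)$, both of which vanish once $y=z=t_0=x$. Hence $R_n(f,S_n;x,x,x)=\widetilde R_n(f,S_n;x,x,x)$, and the integral representation \eqref{eq2.19} may be used directly with the choice $t_0=x$.

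First I would record the collapsed kernel: setting $y=z=x$ in \eqref{eq2.1} deletes the middle branch, so that $S_{n-1}(t;x,x,x)$ equals $\frac{(t-a)^{n-1}}{(n-1)!}$ on $[a,x]$ and $\frac{(t-b)^{n-1}}{(n-1)!}$ on $[x,b]$. Reproducing the Hölder step from the proof of Theorem~\ref{thm7} with $t_0=x$ then gives
\begin{multline*}
\left|R_n(f,S_n;x,x,x)\right| \le H\int_a^b |t-x|^r\,\bigl|S_{n-1}(t;x,x,x)\bigr|\,dt
\\
= \frac{H}{(n-1)!}\left[\int_a^x (x-t)^r(t-a)^{n-1}\,dt + \int_x^b (t-x)^r(b-t)^{n-1}\,dt\right],
\end{multline*}
where $H$ denotes the Hölder constant appearing in the hypothesis.

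Next I would evaluate the two integrals by the substitutions $t=a+s(x-a)$ and $t=x+s(b-x)$ with $s\in[0,1]$, each of which reduces to a Beta integral; the first produces $(x-a)^{r+n}\,\mathrm{B}(n,r+1)$ and the second $(b-x)^{r+n}\,\mathrm{B}(r+1,n)$, the two Beta values agreeing by symmetry. Using $\mathrm{B}(n,r+1)=\Gamma(n)\Gamma(1+r)/\Gamma(1+n+r)$ together with $\Gamma(n)=(n-1)!$, the common prefactor $\frac{1}{(n-1)!}\mathrm{B}(n,r+1)$ collapses to exactly $\frac{\Gamma(1+r)}{\Gamma(1+n+r)}$, which yields the asserted bound. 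The Lipschitz case follows at once by taking $r=1$ and $H=L$, whereupon $\frac{\Gamma(2)}{\Gamma(n+2)}=\frac{1}{(n+1)!}$.

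Since each step is either a direct specialization or the standard Beta-integral computation, I foresee no real obstacle; the only point requiring care is that the jump of $S_n$ at $t=x$ (present because $(x-a)^n\neq(x-b)^n$ in general) contributes nothing when passing from $dS_n$ to $S_{n-1}\,dt$, precisely because the integrand factor $f^{(n-1)}(t)-f^{(n-1)}(x)$ vanishes at $t=x$.
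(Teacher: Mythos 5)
Your proposal is correct and follows essentially the same route as the paper: both specialize the representation \eqref{eq2.19} to $y=z=x=t_0$, apply the H\"{o}lder condition to get the two weighted integrals $\int_a^x (x-t)^r(t-a)^{n-1}\,dt$ and $\int_x^b (t-x)^r(b-t)^{n-1}\,dt$, and evaluate them as Beta integrals to obtain $\Gamma(1+r)/\Gamma(1+n+r)$ (the paper writes this constant as $r\Gamma(r)/\bigl((r+n)\Gamma(r+n)\bigr)$, which is the same quantity). Your explicit observation that the jump of $S_n$ at $t=x$ contributes nothing because the integrand vanishes there is a point the paper leaves implicit, but it does not change the argument.
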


\begin{proof}
Now, since $t_0$ is arbitrarily chosen in $\left[a,b\right]$, we
give the following detailed estimates of $\widetilde{R}_n\left(
f,S_n;y,x,z \right) $. From \eqref{eq2.19}, we can write
\begin{align*}
&\left|{\widetilde{R}_n\left( f,S_n;x,x,x \right) }\right|
\nonumber\\
&\le \frac{1}{\left(n-1\right)!} \int_a^x {\left|{f^{\left( {n-1}
\right)} \left( t \right)-f^{\left( {n-1} \right)} \left(
t_0\right) }\right|\left( {t-a} \right)^{n-1}dt}
\nonumber \\
 &\qquad\qquad+ \frac{1}{\left(n-1\right)!} \int_x^b
{\left|{f^{\left( {n-1} \right)} \left( t \right)-f^{\left( {n-1}
\right)} \left( t_0\right) }\right|\left( {b-t} \right)^{n-1}dt}
\nonumber\\
&\le \frac{H}{\left(n-1\right)!} \left[{  \int_a^x {\left|{t-x
}\right|^{r}\left( {t-a} \right)^{n-1}dt}   + \int_x^b {\left|{t-x
}\right|^{r}\left( {b-t} \right)^{n-1}dt} }\right]
\\
&=H \frac{{r \Gamma \left( r  \right)}}{{\left( {r + n}
\right)\Gamma \left( {r  + n} \right)}} \left[ {\left( {x - a}
\right)^{r + n}  + \left( {b - x} \right)^{r  + n} } \right],
\end{align*}
and this ends the proof.
\end{proof}

\section{Two--point Ostrowski formula via Fink approach}\label{sec3}
In this sections we present a Fink type identity for two-point
formula \eqref{eq1.14} and then we give some estimates of the
remainder.

\subsection{General Fink type identity}
An identity which express two-point formula of Ostrowski's type
via Fink approach and using harmonic sequence of polynomials can
be given by the representation:
\begin{theorem}\label{thm8}
Let $I$ be a real interval, $a,b \in I^{\circ}$ $(a<b)$. Let $Q_k$
be a harmonic sequence of polynomials and let $f:I \to \mathbb{R}$
be such that $f^{\left( n \right)}$ is absolutely continuous on
$I$ for $n\ge1$ with $Q_{n-1}\left( {t} \right) S\left( {t,\cdot}
\right)f^{\left( n \right)} \left( t \right)$ is integrable. Then
we have the representation
\begin{multline}
\frac{1}{n}\left[ {\frac{\left( {x - a} \right)f\left( y \right) +
\left( {b - x} \right)f\left( z \right)}{b-a} + \sum\limits_{k =
1}^{n - 1} { \left\{{T_k\left(y,x,z\right)+F_k \left( {a,b}
\right) }\right\}} } \right] - \frac{1}{b-a}\int_a^b {f\left( s
\right)ds}
\\
= \frac{{\left( { - 1} \right)^{n - 1} }}{n\left(b-a
\right)}\int_a^b {Q_{n - 1} \left( t \right)K\left( {t;y,x,z}
\right)f^{\left( n \right)} \left( t \right)dt},\label{eq3.1}
\end{multline}
for all $a\le y \le x \le z \le b$, where
\begin{align}
T_k\left(y,x,z\right):=\frac{\left( { - 1} \right)^k}{b-a} \left\{
{\left( {x - a} \right)Q_k \left( y \right)f^{\left( k \right)}
\left( y \right)+
  \left( {b - x} \right)Q_k \left( z \right)f^{\left( k \right)}
\left( z \right)} \right\},\label{eq3.2}
\end{align}
\begin{align}
F_k\left({a,b}\right) = \frac{\left( { - 1} \right)^k \left( {n -
k} \right)}{\left( {b - a} \right)}\left[ {Q_k \left( a
\right)f^{\left( {k - 1} \right)} \left( a \right) - Q_k \left( b
\right)f^{\left( {k - 1} \right)} \left( b \right)} \right],
\label{eq3.3}
\end{align}
 and
\begin{align}
 K\left( {t;y,x,z}
\right)= \left\{ \begin{array}{l}
 t - a,\,\,\,\,\,\,\,\,\,\,\,\,\,\,\,\,\,\,\,\,\,\,\,\,\,\,\,\,\,\,\,\,\,\,\,\,a \le t \le
 y
 \\
 t - x,\,\,\,\,\,\,\,\,\,\,\,\,\,\,\,\,\,\,\,\,\,\,\,\,\,\,\,\,\,\, y  < t <
 z
 \\
 t - b,\,\,\,\,\,\,\,\,\,\,\,\,\,\,\,\,\, z \le t \le b
 \end{array} \right.,\label{eq3.4}
 \end{align}
 for all $a\le y \le x \le z \le b$.
 \end{theorem}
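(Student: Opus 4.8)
The plan is to establish the Fink--type identity \eqref{eq3.1} by the same Riemann--Stieltjes integration--by--parts machinery that produced the basic representation \eqref{eq2.6}, but now carrying the harmonic weight $Q_{n-1}$ through the computation. The central object is the weighted remainder integral
\begin{align*}
J := \int_a^b {Q_{n-1}\left(t\right)K\left(t;y,x,z\right)f^{\left(n\right)}\left(t\right)dt},
\end{align*}
where $K$ is the first--order kernel of \eqref{eq3.4}. First I would split $J$ across the three subintervals $[a,y]$, $(y,z)$, $[z,b]$ on which $K$ equals $t-a$, $t-x$, $t-b$ respectively, so that $J = J_1 + J_2 + J_3$. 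On each piece the integrand is a product of the polynomial factor $Q_{n-1}(t)(t-\cdot)$ against $f^{(n)}$, which is exactly the setting for repeated integration by parts.

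The key step is to integrate by parts $n-1$ times on each $J_i$, using the defining Appell/harmonic relation $Q_k' = Q_{k-1}$ (with $Q_0 = 1$) to lower the index of the polynomial at each stage while raising the antiderivative count on $f$. Because $K$ is only piecewise linear, the factor $(t-a)$, $(t-x)$ or $(t-b)$ contributes at most one extra differentiation; the bulk of the boundary terms therefore come from differentiating $Q_{n-1}$ down through $Q_{n-2},\dots,Q_0$. This produces two families of boundary contributions: those evaluated at the \emph{interior} nodes $y$ and $z$, which I expect to assemble into the terms $T_k(y,x,z)$ of \eqref{eq3.2} (note the interior node carries the $(x-a)$ or $(b-x)$ weight coming from the jump in $K$ across $y$ and $z$); and those evaluated at the \emph{endpoints} $a$ and $b$, which should collect into $F_k(a,b)$ of \eqref{eq3.3} (here the combinatorial factor $n-k$ arises because each $F_k$ term receives a contribution at every integration--by--parts stage beyond the $k$--th). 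After $n-1$ steps the residual integral is $\int_a^b K(t;y,x,z)\,df(t)$ up to sign, which by the definition \eqref{eq1.16} is precisely $E(f;y,x,z)$ and equals $Q(f;y,x,z)-\int_a^b f$; dividing through by $n(b-a)$ then rearranges into the left--hand side of \eqref{eq3.1}.

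The main obstacle will be the careful bookkeeping of the boundary terms at the interior nodes: across $t=y$ the kernel jumps from $t-a$ to $t-x$, and across $t=z$ from $t-x$ to $t-b$, so each integration by parts leaves residual evaluations at $y^-,y^+,z^-,z^+$ that must be combined with the correct powers $Q_k(y)$, $Q_k(z)$ and the weights $(x-a)$, $(b-x)$. Keeping track of the alternating signs $(-1)^k$ through the $n-1$ iterations, and verifying that the interior contributions telescope cleanly into $T_k$ while the endpoint contributions accumulate into $(n-k)F_k$, is where the computation is delicate; everything else is the standard successive--integration--by--parts argument already rehearsed in \eqref{eq2.2}--\eqref{eq2.5}. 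As a consistency check I would specialize $Q_k(t)=\tfrac{(t-x)^k}{k!}$ and confirm that the identity collapses to the Fink representation \eqref{eq1.6}, and set $y=z=x$ to recover the single--node case, exactly as the remark following \eqref{eq1.12} predicts.
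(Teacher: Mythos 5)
Your overall strategy --- direct repeated integration by parts on $\int_a^b Q_{n-1}(t)K(t;y,x,z)f^{(n)}(t)\,dt$ over the three subintervals --- is a genuinely different route from the paper's. The paper does no fresh integration by parts at all: it takes the known one-point identity \eqref{eq1.11} of Dedi\'{c} et al.\ (restated as \eqref{eq3.5}), applies it once on $[a,x]$ with node $y$ and once on $[x,b]$ with node $z$, multiplies by the weights $(x-a)$ and $(b-x)$, adds, and then uses a Fubini-type interchange (equations \eqref{eq3.12}--\eqref{eq3.14}) to merge the two iterated integrals into the single kernel integral; the endpoint sums combine via $(x-a)F_k(a,x)+(b-x)F_k(x,b)=(b-a)F_k(a,b)$. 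Your route is viable in principle, but as written its endgame does not close.

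The concrete problem is your claim that ``after $n-1$ steps the residual integral is $\int_a^b K(t;y,x,z)\,df(t)$ up to sign,'' which you then identify with $E(f;y,x,z)=Q(f;y,x,z)-\int_a^b f$. On each piece the integrand is $Q_{n-1}(t)(t-c)f^{(n)}(t)$ with $c\in\{a,x,b\}$, and the product rule gives $\frac{d^{j}}{dt^{j}}\left[Q_{n-1}(t)(t-c)\right]=Q_{n-1-j}(t)(t-c)+jQ_{n-j}(t)$. Hence after $n-1$ integrations by parts the surviving integrand is $\left[(t-c)+(n-1)Q_{1}(t)\right]f'(t)$, not $(t-c)f'(t)$: there is an extra term $(n-1)\int Q_{1}f'$ that must be integrated by parts once more, producing further boundary terms and, crucially, an additional $(n-1)\int_a^b f$. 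This is not optional bookkeeping: multiplying \eqref{eq3.1} by $n(b-a)$ shows the target identity carries $n\int_a^b f$, whereas your reduction to $Q(f;y,x,z)-\int_a^b f$ yields coefficient $1$, so for every $n\ge 2$ the identity you would obtain is off by $(n-1)\int_a^b f$ plus the unaccounted boundary contributions. You correctly anticipate that the factor $n-k$ in $F_k$ arises from accumulation over successive stages, but the same accumulation mechanism is what turns $\int_a^b f$ into $n\int_a^b f$, and your plan omits it at exactly the step where the two sides are supposed to match.
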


\begin{proof}
From \eqref{eq1.11}, we find the formula
\begin{multline}
\frac{1}{n}\left[ {f\left( x \right) + \sum\limits_{k = 1}^{n - 1}
{\left( { - 1} \right)^k Q_k \left( x \right)f^{\left( k \right)}
\left( x \right)}  + \sum\limits_{k = 1}^{n - 1} {F_k \left( {a,b}
\right)} } \right] - \frac{1}{{b - a}}\int_a^b {f\left( y
\right)dy}
 \\
= \frac{{\left( { - 1} \right)^{n - 1} }}{{\left( {b - a}
\right)n}}\int_a^b {\left( {\int_s^x {Q_{n - 1} \left( t
\right)f^{\left( n \right)} \left( t \right)dt} }
\right)ds}\label{eq3.5}
\end{multline}
where
\begin{align}
F_k\left( {a,b} \right) = \frac{{\left( { - 1} \right)^k \left( {n
- k} \right)}}{{b - a}}\left[ {Q_k \left( a \right)f^{\left( {k -
1} \right)} \left( a \right) - Q_k \left( b \right)f^{\left( {k -
1} \right)} \left( b \right)} \right].\label{eq3.6}
\end{align}
Fix $y,z\in \left[a,b\right]$. In the representation
\eqref{eq3.5}, replace $x$ by $y$ and $b$ by $x$ we get
\begin{multline}
\frac{1}{n}\left[ {f\left( y \right) + \sum\limits_{k = 1}^{n - 1}
{\left( { - 1} \right)^k Q_k \left( y \right)f^{\left( k \right)}
\left( y \right)}  + \sum\limits_{k = 1}^{n - 1} {F_k \left( {a,x}
\right)} } \right] - \frac{1}{{x - a}}\int_a^x {f\left( s
\right)ds}
 \\
= \frac{{\left( { - 1} \right)^{n - 1} }}{{\left( {x - a}
\right)n}}\int_a^x {\left( {\int_s^y {Q_{n - 1} \left( t
\right)f^{\left( n \right)} \left( t \right)dt} }
\right)ds}.\label{eq3.7}
\end{multline}
Multiplying both sides of \eqref{eq3.7} by $\left(x-a\right)$, we
get
\begin{multline}
\frac{{\left( {x - a} \right)}}{n}\left[ {f\left( y \right) +
\sum\limits_{k = 1}^{n - 1} {\left( { - 1} \right)^k Q_k \left( y
\right)f^{\left( k \right)} \left( y \right)}  + \sum\limits_{k =
1}^{n - 1} {F_k \left( {a,x} \right)} } \right] - \int_a^x
{f\left(s \right)ds}
\\
= \frac{{\left( { - 1} \right)^{n - 1} }}{n}\int_a^x {\left(
{\int_s^y {Q_{n - 1} \left( t \right)f^{\left( n \right)} \left( t
\right)dt} } \right)ds}\label{eq3.8}
\end{multline}
The second step, in the same formula \eqref{eq3.5} we replace
every $x$ by $z$ and $a$ by $x$, then $f$ has the representation
\begin{multline}
\frac{1}{n}\left[ {f\left( z \right) + \sum\limits_{k = 1}^{n - 1}
{\left( { - 1} \right)^k Q_k \left( z \right)f^{\left( k \right)}
\left( z \right)}  + \sum\limits_{k = 1}^{n - 1} {F_k \left( {x,b}
\right)} } \right] - \frac{1}{{b - x}}\int_x^b {f\left( s
\right)ds}
\\
= \frac{{\left( { - 1} \right)^{n - 1} }}{{\left( {b - x}
\right)n}}\int_x^b {\left( {\int_s^z {Q_{n - 1} \left( t
\right)f^{\left( n \right)} \left( t \right)dt} }
\right)ds},\label{eq3.9}
\end{multline}
Multiplying both sides of \eqref{eq3.9} by $\left(b-x\right)$ we
get
\begin{multline}
\frac{{\left( {b - x} \right)}}{n}\left[ {f\left( z \right) +
\sum\limits_{k = 1}^{n - 1} {\left( { - 1} \right)^k Q_k \left( z
\right)f^{\left( k \right)} \left( z \right)}  + \sum\limits_{k =
1}^{n - 1} { F_k  \left( {x,b} \right)} } \right] - \int_x^b
{f\left( s \right)ds}
\\
= \frac{{\left( { - 1} \right)^{n - 1} }}{n}\int_x^b {\left(
{\int_s^z {Q_{n - 1} \left( t \right)f^{\left( n \right)} \left( t
\right)dt} } \right)ds}\label{eq3.10}
\end{multline}
Adding the equations \eqref{eq3.8} and \eqref{eq3.10} we get
\begin{multline}
\frac{1}{n}\left[ {\left( {x - a} \right)f\left( y \right) +
\left( {b - x} \right)f\left( z \right) + \sum\limits_{k = 1}^{n -
1} {\left( { - 1} \right)^k \left\{ {\left( {x - a} \right)Q_k
\left( y \right)f^{\left( k \right)} \left( y \right) + \left( {b
- x} \right)Q_k \left( z \right)f^{\left( k \right)} \left( z
\right)} \right\}} } \right]
\\
+ \left[ {\left( {x - a} \right)\sum\limits_{k = 1}^{n - 1} { F_k
\left( {a,x} \right)}  + \left( {b - x} \right)\sum\limits_{k =
1}^{n - 1} {F_k \left( {x,b} \right)} } \right] - \int_a^b
{f\left( s \right)ds}
\\
= \frac{{\left( { - 1} \right)^{n - 1} }}{n}\left[ {\int_a^x
{\left( {\int_s^y {Q_{n - 1} \left( t \right)f^{\left( n \right)}
\left( t \right)dt} } \right)ds}  + \int_x^b {\left( {\int_s^z
{Q_{n - 1} \left( t \right)f^{\left( n \right)} \left( t
\right)dt} } \right)ds} } \right] \label{eq3.11}
\end{multline}
To simplify the right hand side, we write
\begin{align}
\int_a^{x} {ds} \int_s^y {dt}  = \int_a^y {ds} \int_s^y {dt}  +
\int_y^{x} {ds} \int_s^y {dt} &= \int_a^y {dt} \int_a^t {ds}  -
\int_y^{x} {ds} \int_y^s {dt}
\nonumber\\
&= \int_a^y {dt} \int_a^t {ds}  - \int_y^{x} {dt} \int_t^{x} {ds},
\label{eq3.12}
\end{align}
and
\begin{align}
\int_{x}^b {ds} \int_s^{z} {dt}  = \int_{x}^{z} {ds} \int_s^{z}
{dt} + \int_{z}^b {ds} \int_s^{z} {dt} &= \int_{x}^{z} {dt}
\int_{x}^t {ds}  - \int_{x}^b {ds} \int_z^{s} {dt}
\nonumber\\
&= \int_{x}^{z} {dt} \int_{x}^t {ds}  - \int_{z}^b {dt} \int_t^b
{ds}.\label{eq3.13}
\end{align}
Adding \eqref{eq3.12} and \eqref{eq3.13}, we get
\begin{multline}
\int_a^{x} {dy} \int_s^y {dt}+\int_{x}^b {ds} \int_s^{z} {dt}
\\
= \int_a^y {dt} \int_a^t {ds}  - \int_y^{x} {dt} \int_t^{x}
{ds}+\int_{x}^{z} {dt} \int_{x}^t {ds}  - \int_{z}^b {dt} \int_t^b
{ds}.\label{eq3.14}
\end{multline}
In viewing of \eqref{eq3.14}, the right hand side of
\eqref{eq3.11} is simplified to be
\begin{align*}
&\frac{{\left( { - 1} \right)^{n - 1} }}{n}\left[ {\int_a^x
{\left( {\int_s^y {Q_{n - 1} \left( t \right)f^{\left( n \right)}
\left( t \right)dt} } \right)ds}  + \int_x^b {\left( {\int_s^z
{Q_{n - 1} \left( t \right)f^{\left( n \right)} \left( t
\right)dt} } \right)ds} } \right]
\\
&= \frac{{\left( { - 1} \right)^{n - 1} }}{n}\int_a^b {Q_{n - 1}
\left( t \right)K\left( {t;y,x,z} \right)f^{\left( n \right)}
\left( t \right)dt},
\end{align*}
where
\begin{align*}
 K\left( {t;y,x,z } \right) = \left\{ \begin{array}{l}
 t - a,\,\,\,\,\,\,\,\,\,\,a \le t \le y
 \\
 t - x ,\,\,\,\,\,\,\,\,\,\,y  < t < z
 \\
 t - b,\,\,\,\,\,\,\,\,\,\,z \le t \le b
 \end{array} \right.,
 \end{align*}
 for all $a \le y \le x \le z \le b$.

Also, we note that in the right hand side
\begin{align*}
\left( {x - a} \right)F_k \left( {a,x} \right) + \left( {b - x}
\right)F_k \left( {x,b} \right)
 &= \left( { - 1} \right)^k \left( {n - k} \right)\left[ {Q_k \left( a \right)f^{\left( {k - 1} \right)} \left( a \right) - Q_k \left( b \right)f^{\left( {k - 1} \right)} \left( b \right)} \right]
\\
&= \left( {b - a} \right)F_k \left( {a,b} \right).
\end{align*}
Hence, the identity \eqref{eq3.3} is obtained by combining the
last two equalities with \eqref{eq3.6}, and this ends the proof.
\end{proof}


\subsection{Error Estimates}
To approximate $\int_a^b{f\left(t\right)dt} $, let us define the
general quadrature rule
\begin{align}
\label{eq3.15} \int_a^b{f\left(t\right)dt} =
\mathcal{G}_n\left(f,Q_n;y,x,z\right) +
\mathcal{E}_n\left(f,Q_n;y,x,z\right),
\end{align}
for all  $a\le y\le x \le z \le b$, where
$\mathcal{G}_n\left(f,Q_n;y,x,z\right)$ is the quadrature formula
given by
\begin{multline}
\mathcal{G}_n\left(f,Q_n;y,x,z\right)
\\
= \frac{b-a}{n}\left[ {\frac{\left( {x - a} \right)f\left( y
\right) + \left( {b - x} \right)f\left( z \right)}{b-a} +
\sum\limits_{k = 1}^{n - 1} { \left\{{T_k\left(y,x,z\right)+F_k
\left( {a,b} \right) }\right\}} } \right],\label{eq3.16}
\end{multline}
and $\mathcal{E}_n\left(f,Q_n;y,x,z\right)$ is the error term
given by
\begin{align}
\mathcal{E}_n\left(f,Q_n;y,x,z\right)= \frac{{\left( { - 1}
\right)^{n} }}{n}\int_a^b {Q_{n - 1} \left( t \right)K\left(
{t;y,x,z} \right)f^{\left( n \right)} \left( t
\right)dt}.\label{eq3.17}
\end{align}

\begin{theorem}
\label{thm9} Under the assumptions of Theorem \ref{thm8}, we have
\begin{align}
\left|{\mathcal{E}_n\left(f,Q_n;y,x,z\right)} \right| \le
N\left(f;y,x,z\right)\cdot \left\| {f^{\left( n \right)} }
\right\|_p,\label{eq3.18}
\end{align}
$\forall p\in \left[1,\infty\right]$ and all $a\le y\le x \le z
\le b$, where
\begin{align}
\label{eq3.19}N\left(f;x,a,b\right):=\frac{1}{{n }} \left\{
\begin{array}{l}
 \mathop {\sup }\limits_{a \le t \le b} \left\{ {\left| {Q_{n - 1}
\left( t \right)} \right| \left| {K\left(
{t;y,x,z} \right)} \right|} \right\},\,\,\,\,\,\,\,\,\,\,\,\,\,\,\,\,\,\,\,\,\,\,p = 1 \\
 \\
  \left( {\int_a^b {\left| {Q_{n - 1}
\left( t \right)} \right|^{q} \left| {K\left(
{t;y,x,z} \right)} \right|^q dt} } \right)^{1/q} ,\,\,\,\,\,\,\,\,\,\,1 < p < \infty  \\
 \\
   \int_a^b {\left| {Q_{n - 1}
\left( t \right)} \right| \left| {K\left(
{t;y,x,z} \right)} \right|dt} ,\,\,\,\,\,\,\,\,\,\,\,\,\,\,\,\,\,\,\,\,\,\,\,\,\,\,\,\,\,\,\,\,p = \infty  \\
 \end{array} \right..
\end{align}
\end{theorem}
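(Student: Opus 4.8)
The plan is to read off all three bounds directly from the integral representation \eqref{eq3.17} of the error term, estimating the integral by a Hölder-type argument carried out separately in each of the three regimes of $p$. Since $\left|(-1)^n\right|=1$, taking absolute values in \eqref{eq3.17} immediately gives
\[
\left|\mathcal{E}_n\left(f,Q_n;y,x,z\right)\right| \le \frac{1}{n}\int_a^b \left|Q_{n-1}(t)\right|\,\left|K(t;y,x,z)\right|\,\left|f^{(n)}(t)\right|\,dt,
\]
so the whole proof reduces to estimating this integral of a product of three factors. The hypotheses inherited from Theorem \ref{thm8} guarantee that $Q_{n-1}(t)K(t;y,x,z)f^{(n)}(t)$ is integrable, which legitimises each of the estimates below.

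For $p=1$ I would bound the two kernel factors by their joint supremum and keep $\left|f^{(n)}\right|$ under the integral, obtaining
\[
\left|\mathcal{E}_n\right| \le \frac{1}{n}\,\sup_{a\le t\le b}\left\{\left|Q_{n-1}(t)\right|\left|K(t;y,x,z)\right|\right\}\int_a^b \left|f^{(n)}(t)\right|\,dt,
\]
which is exactly the first line of \eqref{eq3.19} multiplied by $\|f^{(n)}\|_1$. For the range $1<p<\infty$ I would instead apply Hölder's inequality with conjugate exponents $p$ and $q$, $\frac1p+\frac1q=1$, grouping $\left|f^{(n)}\right|$ with the exponent $p$ and the kernel product $\left|Q_{n-1}K\right|$ with the exponent $q$; this produces the factor $\bigl(\int_a^b|Q_{n-1}(t)K(t;y,x,z)|^q\,dt\bigr)^{1/q}$, i.e. the middle line of \eqref{eq3.19}, times $\|f^{(n)}\|_p$. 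Finally, for $p=\infty$ I would pull $\left|f^{(n)}\right|$ out as its essential supremum and leave the kernel product under the integral, giving $\frac1n\int_a^b|Q_{n-1}(t)K(t;y,x,z)|\,dt$ times $\|f^{(n)}\|_\infty$, which is the last line of \eqref{eq3.19}.

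Collecting the three cases then yields the unified statement \eqref{eq3.18} with $N(f;y,x,z)$ as defined in \eqref{eq3.19}. I do not expect a genuine obstacle here: this is precisely the three-case Hölder estimate already used for the single kernel $S_n$ in the proof of Theorem \ref{thm6}, the only structural difference being that $S_n$ is now replaced by the product $Q_{n-1}K$. Because $Q_{n-1}$ is an arbitrary member of a harmonic sequence, the sharp closed-form evaluations of $\int|S_n|^q$ available through Lemma \ref{lemma2} are no longer at hand, so the bounds are necessarily left in integral form. The only point demanding a little care is keeping the three factors correctly associated when invoking Hölder, so that the power $q$ lands on the kernel product $Q_{n-1}K$ and the power $p$ on $f^{(n)}$.
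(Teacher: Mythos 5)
Your proposal is correct and follows essentially the same route as the paper's own proof: both start by taking absolute values in the representation \eqref{eq3.17} to reduce to $\frac{1}{n}\int_a^b |Q_{n-1}(t)|\,|K(t;y,x,z)|\,|f^{(n)}(t)|\,dt$, and then treat the three regimes $p=1$, $1<p<\infty$, $p=\infty$ by the supremum bound, H\"{o}lder's inequality, and the $L^\infty$--$L^1$ pairing respectively. Nothing further is needed.
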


\begin{proof}
Utilizing the triangle integral inequality on the identity
\eqref{eq3.17} and employing some known norm inequalities we get
\begin{align*}
\left|{\mathcal{E}_n\left(f,Q_n;y,x,z\right)} \right| &=\left|{
\frac{{\left( { - 1} \right)^{n - 1} }}{n}\int_a^b {Q_{n - 1}
\left( t \right)K\left( {t;y,x,z} \right)f^{\left( n \right)}
\left( t \right)dt}} \right|
\\
&\le \frac{1}{{n }}\int_a^b {\left| {Q_{n - 1} \left( t \right)}
\right| \left|{K\left( {t;y,x,z} \right)}\right| \left|{f^{\left(
n \right)} \left( t \right)}\right|dt}
\\
&\le  \frac{1}{{n }} \left\{ \begin{array}{l}
 \left\| {f^{\left( n \right)} } \right\|_1 \mathop {\sup }\limits_{a \le t \le b} \left\{ {\left| {Q_{n - 1}
\left( t \right)} \right| \left| {K\left( {t;y,x,z} \right)} \right|} \right\},\,\,\,\,\,\,\,\,\,\,\,\,\,\,\,\,\,\,\,\,\,\,\,\,\,\,p = 1 \\
 \\
 \left\| {f^{\left( n \right)} } \right\|_p \left( {\int_a^b {\left| {Q_{n - 1}
\left( t \right)} \right|^{q} \left| {K\left( {t;y,x,z} \right)} \right|^q dt} } \right)^{1/q} ,\,\,\,\,\,\,\,\,\,\,1 < p < \infty  \\
 \\
 \left\| {f^{\left( n \right)} } \right\|_\infty  \int_a^b {\left| {Q_{n - 1}
\left( t \right)} \right| \left| {K\left( {t;y,x,z} \right)} \right|dt} ,\,\,\,\,\,\,\,\,\,\,\,\,\,\,\,\,\,\,\,\,\,\,\,\,\,\,\,\,\,\,\,\,\,p = \infty  \\
 \end{array} \right.\\
\\
&=N\left(f;x,a,b\right)  \left\| {f^{\left( n \right)} }
\right\|_p, \qquad \forall p\in \left[1,\infty\right],
\end{align*}
and this completes the proof.
\end{proof}

\begin{corollary}
\label{cor2}Under the assumptions of Theorem \ref{thm8}, we have
\begin{multline}
\left|{\mathcal{E}_n\left(f,Q_n;y,x,z\right)} \right|
\\
\le  \frac{1}{n}\max \left\{ {\left( {y  - a} \right),\left(
{\frac{{z - y }}{2} + \left| {x - \frac{{y  +z }}{2}} \right|}
\right),\left( {b - z } \right)} \right\}\cdot\left\| {Q_{n-1}}
\right\|_q\cdot \left\| {f^{\left( n \right)} }
\right\|_p,\label{eq3.20}
\end{multline}
$\forall p,q \ge 1$ with $\frac{1}{p}+\frac{1}{q}=1$ and all $a\le
y\le x \le z \le b$, where
\begin{align*}
\left\| {Q_{n-1}} \right\|_q=\left\{
\begin{array}{l}
 \mathop {\sup }\limits_{a \le t \le b} \left\{ {\left| {Q_{n - 1}
\left( t \right)} \right|} \right\},\,\,\,\,\,\,\,\,\,\,\,\,\,\,\,\,\,\,\,\,\,\,\,\,\,\,q = \infty \\
 \\
  \left( {\int_a^b {\left| {Q_{n - 1}
\left( t \right)} \right|^{q}dt} } \right)^{1/q} ,\,\,\,\,\,\,\,\,\,\,1 < q < \infty  \\
 \\
   \int_a^b {\left| {Q_{n - 1}
\left( t \right)} \right| dt} ,\,\,\,\,\,\,\,\,\,\,\,\,\,\,\,\,\,\,\,\,\,\,\,\,\,\,\,\,\,\,\,\,\,q = 1  \\
 \end{array} \right.
\end{align*}
\end{corollary}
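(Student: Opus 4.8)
The plan is to sharpen the bound \eqref{eq3.18} furnished by Theorem \ref{thm9} by decoupling the two factors $Q_{n-1}$ and $K$ that appear together inside the quantity $N(f;y,x,z)$ of \eqref{eq3.19}. In each of the three regimes of \eqref{eq3.19} the term $|Q_{n-1}(t)|$ is paired with $|K(t;y,x,z)|$ under one and the same norm, so the strategy is simply to extract the pointwise supremum $\sup_{a\le t\le b}|K(t;y,x,z)|$ in every case and leave behind a pure norm of $Q_{n-1}$.

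Concretely, I would handle the three cases in parallel. For $p=1$ (hence $q=\infty$) I bound $\sup_t\{|Q_{n-1}(t)||K(t;y,x,z)|\}\le \sup_t|K(t;y,x,z)|\cdot\sup_t|Q_{n-1}(t)|$. For $1<p<\infty$ I estimate $|K(t;y,x,z)|^q\le(\sup_t|K(t;y,x,z)|)^q$ inside the integral and pull the constant through the $q$-th root, obtaining $\sup_t|K(t;y,x,z)|\cdot(\int_a^b|Q_{n-1}(t)|^q\,dt)^{1/q}$. For $p=\infty$ (hence $q=1$) I apply $|K(t;y,x,z)|\le\sup_t|K(t;y,x,z)|$ directly under the integral sign. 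In all three cases the residual factor is precisely $\|Q_{n-1}\|_q$ with $q$ the H\"older conjugate of $p$, so each collapses to the single uniform estimate $N(f;y,x,z)\le \frac{1}{n}\,\sup_{a\le t\le b}|K(t;y,x,z)|\cdot\|Q_{n-1}\|_q$.

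The remaining ingredient is the value of $\sup_{a\le t\le b}|K(t;y,x,z)|$. The key remark is that the kernel $K$ in \eqref{eq3.4} is exactly $S_1(t;y,x,z)$, i.e.\ the kernel \eqref{eq2.1} specialized to $n=1$. Consequently \eqref{eq2.14} with $n=1$ applies verbatim and yields $\sup_{a\le t\le b}|K(t;y,x,z)|=\max\{(y-a),(\frac{z-y}{2}+|x-\frac{y+z}{2}|),(b-z)\}$. Substituting this into the uniform estimate of the previous paragraph produces exactly \eqref{eq3.20} and completes the argument.

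I do not expect a genuine obstacle here, since every step is a routine norm manipulation; the only point deserving care is the identification $K=S_1$, which licenses the use of \eqref{eq2.14}. Should one prefer to avoid that reference, the supremum can be recomputed by hand: on $[a,y]$ and $[z,b]$ the maximal moduli of $K$ are $y-a$ and $b-z$ respectively, while on $(y,z)$ one has $\max\{x-y,\,z-x\}=\frac{z-y}{2}+|x-\frac{y+z}{2}|$ via the identity $\max\{A,B\}=\frac{A+B}{2}+\frac{|A-B|}{2}$, and the three candidates combine to the stated maximum.
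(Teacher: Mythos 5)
Your argument is correct and is essentially the paper's own proof: the paper likewise bounds $N(f;y,x,z)$ by $\frac{1}{n}\sup_{a\le t\le b}|K(t;y,x,z)|\cdot\left\|{Q_{n-1}}\right\|_q$ (treating the three norm regimes exactly as you do) and then evaluates the supremum of $|K|$ as the stated maximum. Your explicit hand computation of that supremum, equivalent to \eqref{eq2.14} with $n=1$, is a welcome elaboration of the paper's ``it is easy to verify.''
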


\begin{proof}
In \eqref{eq3.20}, it is easy to verify that
\begin{align*}
N\left(f;x,y,z\right)&\le \frac{1}{n}\mathop {\sup }\limits_{a \le
t \le b} \left\{ { \left| {K\left( {t;y,x,z} \right)} \right|}
\right\}\cdot \left\| {Q_{n-1}} \right\|_q
\\
&= \frac{1}{n}\max \left\{ {\left( {y  - a} \right),\left(
{\frac{{z - y }}{2} + \left| {x - \frac{{y  +z }}{2}} \right|}
\right),\left( {b - z } \right)} \right\}\cdot\left\| {Q_{n-1}}
\right\|_q,
\end{align*}
$\forall q\in \left[1,\infty\right]$ and all $a\le y\le x \le z
\le b$.
\end{proof}

In particular case we have
\begin{corollary}
 Under the assumptions of Theorem \ref{thm8}, we have
\begin{multline}
\left|{\frac{1}{n}\left[ {\frac{\left( {x - a} \right)f\left( y
\right) + \left( {b - x} \right)f\left( z \right)}{b-a} +
\sum\limits_{k = 1}^{n - 1} {
\left\{{\widetilde{T}_k\left(y,x,z\right)+\widetilde{F}_k \left(
{a,b} \right) }\right\}} } \right] - \frac{1}{b-a}\int_a^b
{f\left( y \right)dy}} \right|
\\
\le  \frac{1}{n\left(b-a\right)}\max \left\{ {\left( {y  - a}
\right),\left( {\frac{{z - y }}{2} + \left| {x - \frac{{y  +z
}}{2}} \right|} \right),\left( {b - z } \right)}
\right\}\cdot\left\| {S_{n-1}} \right\|_q\cdot \left\| {f^{\left(
n \right)} } \right\|_p,\label{eq3.21}
\end{multline}
$\forall p,q \ge 1$ with $\frac{1}{p}+\frac{1}{q}=1$ and all $a\le
y\le x \le z \le b$, where
\begin{align*}
\left\| {S_{n-1}} \right\|_q=\left\{
\begin{array}{l}
\frac{1}{{n!}}\left[ {\max \left\{ {\left( {y - a} \right),\left(
{\frac{{z - y}}{2} + \left| {\frac{{y + z}}{2} - x} \right|}
\right),\left( {b - z}
\right)} \right\}} \right]^{n}, \,\,\,\,\,\,\,\,\,q = \infty \\
 \\
 \frac{\left( {y - a} \right)^{\left(n-1\right)q + 1}  + \left(
{x - y} \right)^{\left(n-1\right)q + 1}  + \left( {z - x}
\right)^{\left(n-1\right)q + 1}  + \left( {b - z}
\right)^{\left(n-1\right)q + 1} }{{\left( {\left(n-1\right)q + 1}
\right) \left(n-1\right)!}},\,\,\,\,\,\,\,\,\,\,\,1 < q < \infty  \\
 \\
  \frac{\left( {y - a} \right)^{n}  + \left(
{x - y} \right)^{n}  + \left( {z - x} \right)^{n}  + \left( {b -
z} \right)^{n} }{n!},\qquad\qquad\qquad\,\,\,\,\,\,\,\,\,\,\,\,\,\,\,\,\,\,\,\,\,\,\,\,\,\,\,\,\,\,\,\,\,\,\,\,\,q = 1  \\
 \end{array} \right.
\end{align*}
\end{corollary}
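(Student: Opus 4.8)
The plan is to recognize that the left-hand side of \eqref{eq3.21} is not a new object at all: it is precisely $\tfrac{1}{b-a}$ times the error functional of the Fink-type quadrature rule \eqref{eq3.15}. Indeed, comparing the bracketed term in \eqref{eq3.21} with the quadrature formula \eqref{eq3.16}, the quantity inside the absolute value equals $\tfrac{1}{b-a}\bigl[\mathcal{G}_n(f,Q_n;y,x,z)-\int_a^b f\bigr]$, which by the decomposition \eqref{eq3.15} is $-\tfrac{1}{b-a}\mathcal{E}_n(f,Q_n;y,x,z)$; equivalently, this is exactly the identity \eqref{eq3.1}, whose right-hand side is $\tfrac{(-1)^{n-1}}{n(b-a)}\int_a^b Q_{n-1}(t)K(t;y,x,z)f^{(n)}(t)\,dt$. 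So the entire statement collapses to estimating this one Riemann integral, and the natural route is to invoke the work already done in Theorem \ref{thm9} and Corollary \ref{cor2}.

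First I would take absolute values and apply the triangle inequality followed by H\"older's inequality, splitting into $p=1$, $1<p<\infty$, $p=\infty$ exactly as in the proof of Theorem \ref{thm9}. This isolates the factor $\|f^{(n)}\|_p$ against $\tfrac{1}{n(b-a)}$ times the conjugate $q$-norm of the kernel $Q_{n-1}(\cdot)K(\cdot;y,x,z)$, with $\tfrac1p+\tfrac1q=1$. Then, following Corollary \ref{cor2}, I would peel off the piecewise-linear factor $K$ through the submultiplicative bound $\|Q_{n-1}K\|_q\le\|K\|_\infty\,\|Q_{n-1}\|_q$, and compute $\|K\|_\infty=\sup_{a\le t\le b}|K(t;y,x,z)|$. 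Since $K$ has slope $1$ on each of $[a,y]$, $(y,z)$, $[z,b]$, its supremum is attained at an endpoint and equals $\max\{(y-a),(\tfrac{z-y}{2}+|x-\tfrac{y+z}{2}|),(b-z)\}$, the middle term being just $\max\{x-y,\,z-x\}$ written symmetrically. This produces precisely the prefactor displayed in \eqref{eq3.21}.

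Finally, for the particular harmonic sequence selected here (signalled by the tildes on $T_k$ and $F_k$), the surviving norm is to be evaluated in closed form, yielding the three listed expressions for $\|S_{n-1}\|_q$. These are read off directly from the kernel computations of Lemma \ref{lemma2}: the $q=1$ case from \eqref{eq2.12}, the case $1<q<\infty$ from \eqref{eq2.13}, and the case $q=\infty$ from the supremum \eqref{eq2.14}, all taken after the index shift $n\mapsto n-1$. The one place that genuinely demands care is this bookkeeping: one must track the shift $n\mapsto n-1$ and the placement of the $1/q$ powers consistently across the three regimes so that the closed forms agree with the identities of Lemma \ref{lemma2}; beyond that, the result is a direct substitution into \eqref{eq3.1} together with the estimate of Corollary \ref{cor2}, divided through by $b-a$.
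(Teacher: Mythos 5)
Your route is the same as the paper's: the paper proves this corollary in one line, as an immediate consequence of Corollary \ref{cor2} upon choosing $Q_k(t)=\frac{1}{k!}(t-a)^k$, and your identification of the left-hand side of \eqref{eq3.21} with $\frac{1}{b-a}\left|\mathcal{E}_n(f,Q_n;y,x,z)\right|$, the H\"older splitting into the cases $p=1$, $1<p<\infty$, $p=\infty$, and the factorization $\left\|Q_{n-1}K\right\|_q\le\left\|K\right\|_\infty\left\|Q_{n-1}\right\|_q$ with $\left\|K\right\|_\infty=\max\left\{(y-a),\frac{z-y}{2}+\left|x-\frac{y+z}{2}\right|,(b-z)\right\}$ reproduce exactly the chain Theorem \ref{thm9} $\to$ Corollary \ref{cor2} $\to$ specialization that the paper intends.

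However, the step you set aside as ``bookkeeping'' is precisely where the argument does not close. After Corollary \ref{cor2} the surviving factor is the $L^q$ norm over all of $[a,b]$ of the \emph{single} polynomial $Q_{n-1}(t)=\frac{(t-a)^{n-1}}{(n-1)!}$, namely $\frac{(b-a)^{n-1}}{(n-1)!}$ for $q=\infty$, $\frac{1}{(n-1)!}\bigl(\frac{(b-a)^{(n-1)q+1}}{(n-1)q+1}\bigr)^{1/q}$ for $1<q<\infty$, and $\frac{(b-a)^{n}}{n!}$ for $q=1$. The three expressions displayed as $\left\|S_{n-1}\right\|_q$ in the statement are instead the norms of the piecewise kernel $S_{n-1}(t;y,x,z)$ of \eqref{eq2.1} taken from Lemma \ref{lemma2}, i.e.\ of a function equal to $\frac{(t-a)^{n-1}}{(n-1)!}$, $\frac{(t-x)^{n-1}}{(n-1)!}$, $\frac{(t-b)^{n-1}}{(n-1)!}$ on the three subintervals. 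No single harmonic polynomial $Q_{n-1}$ has these norms, and they differ from $\left\|Q_{n-1}\right\|_q$ in general, so they cannot simply be ``read off'' from Lemma \ref{lemma2} as the value of the surviving factor. (There are further signs of trouble in the target display itself: the $1<q<\infty$ entry is missing the outer exponent $1/q$ that a $q$-norm requires, and the $q=\infty$ entry carries the exponent $n$ rather than $n-1$.) To obtain a correct statement you must either keep the genuine $\left\|Q_{n-1}\right\|_q$ computed above, or prove a variant of Theorem \ref{thm8} allowing a different harmonic sequence on each of $[a,y]$, $(y,z)$, $[z,b]$ so that the role of $Q_{n-1}$ is played by $S_{n-1}$ itself. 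The paper's one-line proof glosses over the same point, but your write-up asserts the identification explicitly and would need to confront it.
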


\begin{proof}
The proof is an immediate consequence of Corollary \ref{cor2}, by
setting $Q_{k}\left( {t} \right)=\frac{1}{k!}\left( {t - a}
\right)^{k}$.
\end{proof}

\begin{remark}
In all above estimates, if one assumes that $f^{(n)}$ is convex,
$r$-convex, quasi-convex, $s$-convex, $P$-convex, or $Q$-convex;
we can obtain other new bounds involving convexity.
\end{remark}

\subsection{ Fink representation for Guessab--Schmeisser formula}
In this part, we give some special formulas of the previous
expansion via Fink approach with some error estimates.

Seeking Taylor like expansion of \eqref{eq3.1}, we set $Q_k \left(
t \right)=\frac{\left( t-\alpha \right)^k}{k!}$, $a\le \alpha \le
b$, then we have the expansion
\begin{multline}
\frac{1}{n}\left[ {\frac{\left( {x - a} \right)f\left( y \right) +
        \left( {b - x} \right)f\left( z \right)}{b-a} + \sum\limits_{k =
        1}^{n - 1} {
        \left\{{\widetilde{T}_k\left(y,x,z\right)+\widetilde{F}_k \left(
            {\alpha;a,b} \right) }\right\}} } \right] - \frac{1}{b-a}\int_a^b
{f\left(s \right)ds}
\\
= \frac{1}{n!\left(b-a \right)}\int_a^b {\left( \alpha-t
    \right)^{n-1}K\left( {t;y,x,z} \right)f^{\left( n \right)} \left(
    t \right)dt},\label{eq3.22}
\end{multline}
where
\begin{align*}
\widetilde{T}_k\left(y,x,z\right):=\frac{1}{\left( { b-a}
    \right)k!} \left\{ {\left( {x - a} \right)\left( \alpha-y
    \right)^k f^{\left( k \right)} \left( y \right)+ \left( { - 1}
    \right)^k \left( {b - x} \right)\left( z-\alpha \right)^kf^{\left(
        k \right)} \left( z \right)} \right\}.
\end{align*}
and
\begin{align*}
\widetilde{F}_k \left( {\alpha;a,b} \right) = \frac{\left( {n - k}
    \right)}{\left( {b - a} \right)k!} \left[ { \left( \alpha-a
    \right)^kf^{\left( {k - 1} \right)} \left( a \right) + \left( { -
        1} \right)^{k+1}  \left( b-\alpha \right)^kf^{\left( {k - 1}
        \right)} \left( b \right)} \right],
\end{align*}
for all $a\le y \le x \le z \le b$, which gives   Fink
representation of  general two-point Ostrowski's formula. One
could get more informative representation by choosing $\alpha=x$
in \eqref{eq3.22}

\begin{remark}
By setting $x=y=z=\alpha$ in \eqref{eq3.22}, we refer to Fink
representation \eqref{eq1.6}.
\end{remark}

 Furthermore, the Fink representation for Guessab--Schmeisser
formula is deduced by setting $y=h$, $z=a+b-h$ and
$x=\frac{a+b}{2}$ in \eqref{eq3.1}, so we get:
\begin{multline}
\frac{1}{n}\left[ {\frac{ f\left( h \right) + f\left( a+b-h
        \right)}{2} + \sum\limits_{k = 1}^{n - 1} {
        \left\{{T_k\left(h,\frac{a+b}{2},a+b-h\right)+F_k \left( {a,b}
            \right) }\right\}} } \right] - \frac{1}{b-a}\int_a^b {f\left( s
    \right)ds}
\\
= \frac{{\left( { - 1} \right)^{n - 1} }}{n\left(b-a
    \right)}\int_a^b {Q_{n - 1} \left( t \right)K\left(
    {t;h,\frac{a+b}{2},a+b-h} \right)f^{\left( n \right)} \left( t
    \right)dt},\label{eq3.23}
\end{multline}
where
\begin{align*}
T_k\left(h,\frac{a+b}{2},a+b-h\right):=\frac{\left( { - 1}
    \right)^k}{2} \left\{ { Q_k \left( h \right)f^{\left( k \right)}
    \left( h \right)+
    Q_k \left( a+b-h \right)f^{\left( k \right)}
    \left( a+b-h \right)} \right\}.
\end{align*}
for all $a\le h \le \frac{a+b}{2}$. In particular, for $Q_k \left(
a+b-t \right)=\left(-1\right)^kQ_k \left( t \right)$ $a\le t  \le
 \frac{a+b}{2}$, we have
\begin{multline*}
\frac{1}{n}\left[ {\frac{ f\left( h \right) + f\left( a+b-h
        \right)}{2} + \sum\limits_{k = 1}^{n - 1} {
        \left\{{T_k\left(h,\frac{a+b}{2},a+b-h\right)+F_k \left( {a,b}
            \right) }\right\}} } \right] - \frac{1}{b-a}\int_a^b {f\left( s
    \right)ds}
\\
= \frac{{\left( { - 1} \right)^{n - 1} }}{n\left(b-a
    \right)}\int_a^b {Q_{n - 1} \left( t \right)K\left(
    {t;h,\frac{a+b}{2},a+b-h} \right)f^{\left( n \right)} \left( t
    \right)dt},
\end{multline*}
where
\begin{align*}
T_k\left(h,\frac{a+b}{2},a+b-h\right):=\frac{\left( { - 1}
    \right)^k}{2}  Q_k \left( h \right)\left\{ {f^{\left( k \right)}
    \left( h \right)+
  \left( -1 \right)^kf^{\left( k \right)}
    \left( a+b-h \right)} \right\}.,
\end{align*}
for all $a\le h \le \frac{a+b}{2}$.

Now, by substituting
$Q_k\left(t\right)=\frac{\left(t-h\right)^k}{k!}$  in
\eqref{eq3.23}, so that we get
\begin{multline}
\frac{1}{n}\left[ {\frac{{f\left( h\right) + f\left( {a + b - h}
            \right)}}{2} + \sum\limits_{k = 1}^{n - 1} { \widetilde{F_k }
        \left(a,b\right)} } \right] - \frac{1}{{b - a}}\int_a^b {f\left( y
    \right)dy}
\\
= \frac{1}{{n!\left({b-a}\right) }}\int_a^b {\left( {h - t}
    \right)^{n - 1} S\left( {t,h} \right)f^{\left( n \right)} \left( t
    \right)dt}. \label{eq3.24}
\end{multline}
 Since $Q_k \left( b
\right) = \left( { - 1} \right)^k Q_k \left( {a} \right)$, then
\begin{align*}
\widetilde{F_k }\left( {a,b} \right) &= \frac{{\left( { - 1}
        \right)^k \left( {n - k} \right)}}{{b - a}}Q_k \left(a
\right)\left[ {f^{\left( {k - 1} \right)} \left( a \right) -\left(
    { - 1} \right)^k f^{\left( {k - 1} \right)} \left( b \right)}
\right]
\\
&=\frac{{\left( { - 1} \right)^k \left( {n - k} \right)}}{{b -
        a}}\frac{\left(a-h\right)^k}{k!}\left[ {f^{\left( {k - 1} \right)}
    \left( a \right) - \left( { - 1} \right)^k f^{\left( {k - 1}
        \right)} \left( b \right)} \right]
\\
&=\frac{{ \left( {n - k} \right)}}{{b -
        a}}\frac{\left(h-a\right)^k}{k!}\left[ {f^{\left( {k - 1} \right)}
    \left( a \right) + \left( { - 1} \right)^{k+1} f^{\left( {k - 1}
        \right)} \left( b \right)} \right].
\end{align*}
Also, we note that
\begin{align*}
Q_k\left(\frac{a+b}{2}\right)=\frac{\left(\frac{a+b}{2}-h\right)^k}{k!}
&=\left({-1}\right)^k\frac{\left(h-\frac{a+b}{2}\right)^k}{k!}
\\
&=\left({-1}\right)^k Q_k\left(\frac{a+b}{2}\right)=
\left({-1}\right)^kQ_k\left(a+b-\frac{a+b}{2}\right),
\end{align*}
this gives that
\begin{align*}
0=\frac{{ \left( {n - k} \right)}}{{\left(b -
a\right)k!}}\left[Q_k\left(\frac{a+b}{2}\right)-\left({-1}\right)^k
Q_k\left(\frac{a+b}{2}\right)\right]&=\frac{{ \left( {n - k}
\right)}}{{\left(b - a\right)k!}}\cdot
\left({1+\left({-1}\right)^{k+1}}\right)
Q_k\left(\frac{a+b}{2}\right).
\end{align*}
By our choice of $Q_k$; we have $Q_k\left(\frac{a+b}{2}\right)=
\frac{\left(\frac{a+b}{2}-h\right)^k}{k!}$, for all $x\in \left[
{a,{\textstyle{{a + b} \over 2}}} \right]$, therefore we can write
\begin{align*}
\widetilde{F_k }\left( {a,b} \right)+0 &=\widetilde{F_k }\left(
{a,b} \right)+\frac{{ \left( {n - k} \right)}}{{\left(b -
a\right)k!}}\left({1+\left({-1}\right)^{k+1}}\right)
Q_k\left(\frac{a+b}{2}\right)
\\
&= \frac{{ \left( {n - k} \right)}}{{\left(b - a\right)k!}}
\left[{\left(h-a\right)^k\left( {f^{\left( {k - 1} \right)} \left(
        a \right) + \left( { - 1} \right)^{k+1} f^{\left( {k - 1} \right)}
        \left( b \right)} \right) }\right.
\\
&\qquad\qquad\left.{+ \left({1+\left({-1}\right)^{k+1}}\right)
    \left(\frac{a+b}{2}-h\right)^k}\right]
\\
&=: G_k\left(h;a,b\right).
\end{align*}
for all $h\in \left[a,\frac{a+b}{2}\right]$. Hence, we just proved
that
\begin{corollary}
Let $I$ be a real interval, $a,b \in I^{\circ}$ $(a<b)$. Let $f:I
\to \mathbb{R}$ be such that $f^{\left( n \right)}$ is absolutely
continuous on $I$ for $n\ge1$ with $ \left( {\cdot-t }
\right)^{n-1}S\left(\cdot,t\right)f^{\left( n \right)} \left( t
\right)$ is integrable. Then we have the representation
\begin{multline}
\frac{1}{n}\left[ {\frac{{f\left( x\right) + f\left( {a + b - x}
            \right)}}{2} + \sum\limits_{k = 1}^{n - 1} {G
        \left(x;a,b\right)} } \right] - \frac{1}{{b - a}}\int_a^b {f\left( y
    \right)dy}
\\
= \frac{1}{{n!\left({b-a}\right) }}\int_a^b {\left( {x - t}
    \right)^{n - 1} S\left( {t,x} \right)f^{\left( n \right)} \left( t
    \right)dt}. \label{eq3.25}
\end{multline}
for all $x\in \left[a,\frac{a+b}{2}\right]$, where
\begin{multline}
G_k\left(x\right):=G_k\left(x;a,b\right)  = \frac{{\left( {n - k}
        \right)}}{{k!\left( {b - a} \right)}} \cdot \left\{{ \left( {x -
        a} \right)^k \left[ {f^{\left( {k - 1} \right)} \left( a \right) +
        \left( { - 1} \right)^{k + 1} f^{\left( {k - 1} \right)} \left( b
        \right)} \right] }\right.
\\
\left.{+ \left( {1 + \left( { - 1} \right)^{k + 1} } \right)\left(
    {\frac{{a + b}}{2} - x} \right)^k f^{\left( {k - 1} \right)}
    \left( {\frac{{a + b}}{2}} \right) }\right\}, \label{eq3.26}
\end{multline}
for all $x\in \left[a,\frac{a+b}{2}\right]$, and
\begin{align}
S\left( {t,h} \right) &= \left\{ \begin{array}{l}
t - a,\,\,\,\,\,\,\,\,\,\,\,\,\,\,\,\,\,\, t \in \left[ {a,x} \right] \\
t - \frac{{a + b}}{2},\,\,\,\,\,\,\,\,\,\,\,t \in \left( {x,a + b - x} \right) \\
t - b,\,\,\,\,\,\,\,\,\,\,\,\,\,\,\,\,\,\, t \in \left[ {a + b - x,b} \right] \\
\end{array} \right.\label{eq3.27}
\\
&:=T_k\left(x,\frac{a+b}{2},a+b-x\right).\nonumber
\end{align}

\end{corollary}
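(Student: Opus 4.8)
The plan is to read \eqref{eq3.25} off the general Fink type identity \eqref{eq3.1} of Theorem~\ref{thm8} by specialising both the three nodes and the harmonic sequence. I would apply Theorem~\ref{thm8} to the symmetric Guessab--Schmeisser configuration: the left node is $x$, the central node is $\frac{a+b}{2}$, and the right node is $a+b-x$, so that $a\le x\le\frac{a+b}{2}\le a+b-x\le b$; for the polynomials I would take the monomial harmonic sequence with $Q_{n-1}(t)=\frac{(t-x)^{n-1}}{(n-1)!}$. Under these choices the three branches of the kernel $K(t;y,x,z)$ in \eqref{eq3.4} become $t-a$, $t-\frac{a+b}{2}$ and $t-b$ on the respective subintervals, which is precisely the kernel $S(t,x)$ of \eqref{eq3.27}.

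The two outer pieces then fall out by direct bookkeeping. On the right-hand side the constant $\frac{(-1)^{n-1}}{n(b-a)}$ multiplies $Q_{n-1}(t)=\frac{(t-x)^{n-1}}{(n-1)!}$, and since $(-1)^{n-1}(t-x)^{n-1}=(x-t)^{n-1}$ and $\frac{1}{n}\cdot\frac{1}{(n-1)!}=\frac{1}{n!}$, the integrand collapses to $\frac{1}{n!(b-a)}(x-t)^{n-1}S(t,x)f^{(n)}(t)$, matching \eqref{eq3.25}. On the left, the two node weights $\frac{a+b}{2}-a$ and $b-\frac{a+b}{2}$ both equal $\frac{b-a}{2}$, so the weighted value $\frac{(\frac{a+b}{2}-a)f(x)+(b-\frac{a+b}{2})f(a+b-x)}{b-a}$ reduces to the symmetric average $\frac{f(x)+f(a+b-x)}{2}$ that opens \eqref{eq3.25}.

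The substantive step, and the one I expect to be the main obstacle, is to compress the two coefficient families, the interior terms $T_k$ from \eqref{eq3.2} and the boundary terms $F_k$ from \eqref{eq3.3}, into the single weight $G_k$ of \eqref{eq3.26}. The strategy I would follow is to invoke the symmetry of the chosen sequence, namely $Q_k(a+b-t)=(-1)^k Q_k(t)$ together with its consequence $Q_k(b)=(-1)^k Q_k(a)$: the former rewrites the $T_k$ contribution in the symmetric form $\tfrac{(-1)^k}{2}Q_k(x)\{f^{(k)}(x)+(-1)^k f^{(k)}(a+b-x)\}$, while the latter folds the two endpoint evaluations in $F_k$ into one factor $Q_k(a)$ times $f^{(k-1)}(a)+(-1)^{k+1}f^{(k-1)}(b)$. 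The central value $f^{(k-1)}(\frac{a+b}{2})$ appearing in \eqref{eq3.26} I would produce by adjoining the identically zero quantity $\frac{(n-k)}{(b-a)k!}(1+(-1)^{k+1})Q_k(\frac{a+b}{2})f^{(k-1)}(\frac{a+b}{2})$, which vanishes because $1+(-1)^{k+1}=0$ for even $k$ while $Q_k(\frac{a+b}{2})=0$ for odd $k$. The delicate point throughout is that $T_k$ and $F_k$ are naturally attached to different centres, so one must track every sign $(-1)^k$ and check that the symmetry relations are genuinely compatible with the explicit monomial choice before the terms collapse to $G_k$; getting that sign- and centre-bookkeeping to close is the crux of the argument, after which \eqref{eq3.25}--\eqref{eq3.27} follow by assembling the simplified left- and right-hand sides.
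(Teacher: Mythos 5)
Your overall route --- specialise Theorem \ref{thm8} to the nodes $(x,\tfrac{a+b}{2},a+b-x)$ with the monomial harmonic sequence $Q_k(t)=\tfrac{(t-x)^k}{k!}$ --- is exactly the route the paper takes, and the routine parts are fine: the kernel of \eqref{eq3.4} does become $S(t,x)$, the weights $\tfrac{a+b}{2}-a=b-\tfrac{a+b}{2}=\tfrac{b-a}{2}$ do produce the symmetric average, and $\tfrac{(-1)^{n-1}}{n}\cdot\tfrac{(t-x)^{n-1}}{(n-1)!}=\tfrac{(x-t)^{n-1}}{n!}$ gives the right-hand normalisation. The gap sits precisely at the step you yourself flag as the crux. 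The symmetry relations you invoke are false for this sequence: $Q_k(b)=\tfrac{(b-x)^k}{k!}$ whereas $(-1)^kQ_k(a)=\tfrac{(x-a)^k}{k!}$, so $Q_k(b)=(-1)^kQ_k(a)$ holds only when $x=\tfrac{a+b}{2}$; likewise $Q_k(a+b-t)\ne(-1)^kQ_k(t)$ and $Q_k(a+b-x)=\tfrac{(a+b-2x)^k}{k!}\ne 0$. Consequently the interior terms $T_k$ of \eqref{eq3.2} do \emph{not} reduce to the symmetric form and do not vanish --- a contribution $\tfrac{(-1)^k}{2}\,\tfrac{(a+b-2x)^k}{k!}\,f^{(k)}(a+b-x)$ survives --- and the endpoint terms $F_k$ of \eqref{eq3.3} do not fold into the single coefficient $(x-a)^k\,[\,f^{(k-1)}(a)+(-1)^{k+1}f^{(k-1)}(b)\,]$. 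The quantity you adjoin as ``identically zero'' is also nonzero for odd $k$, since then $1+(-1)^{k+1}=2$ and $Q_k(\tfrac{a+b}{2})=\tfrac{(\frac{a+b}{2}-x)^k}{k!}\ne 0$.

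To be fair, the paper's own derivation of \eqref{eq3.25} makes these same moves, so you have faithfully reconstructed it; but the bookkeeping you defer cannot be made to close. A direct check shows the identity fails as stated: for $n=2$, $f(t)=t^2$ on $[0,1]$ and $x=0$, the left side of \eqref{eq3.25} equals $\tfrac{1}{2}\bigl[\tfrac{1}{2}+\tfrac{1}{4}\bigr]-\tfrac{1}{3}=\tfrac{1}{24}$, while the right side equals $\int_0^1(-t)(t-\tfrac12)\,dt=-\tfrac{1}{12}$. So either the dropped $T_k$ contributions must be retained and $F_k$ recomputed with the true values $Q_k(a)=\tfrac{(a-x)^k}{k!}$, $Q_k(b)=\tfrac{(b-x)^k}{k!}$ (which changes $G_k$), or the sequence must be re-centred at $\tfrac{a+b}{2}$ (which restores the symmetry but replaces $(x-t)^{n-1}$ by $(\tfrac{a+b}{2}-t)^{n-1}$ on the right and makes $Q_k(x)\ne0$ in $T_k$). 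Either way, the argument as proposed does not yield \eqref{eq3.25}--\eqref{eq3.27}.
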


\begin{theorem}
    \label{thm10}Under the assumptions of Theorem \ref{thm5}. We have
    \begin{align}
    \left|{ \frac{1}{n}\left( {\frac{{f\left( x \right) + f\left( {a +
                        b - x} \right)}}{2} + \sum\limits_{k = 1}^{n - 1} {G_k\left(x\right) } } \right)
        - \frac{1}{{b - a}}\int_a^b {f\left( y \right)dy} }\right|
        \le K\left( {n,p,x} \right)\left\| {f^{\left( n \right)} }
    \right\|_p\label{eq3.28}
    \end{align}
    holds for all $x\in \left[a,\frac{a+b}{2}\right]$,  where
    \begin{align}
    K\left( {n,p,x} \right) =   \left\{ \begin{array}{l}
    \frac{1}{n\cdot n!\left( {b-a} \right)}\left( {\frac{{n - 1}}{n}}
    \right)^{n - 1} \left[ {\frac{{b - a}}{4} + \left| {x - \frac{{3a
                    + b}}{4}} \right|} \right]^n
    ,\,\,\,\,\,\,\,\,\,\,\,\,\,\,\,\,\,\,\,\,\,\,\,\,\,\,\,{\rm{if}}\,\,p = 1 \\
    \\
    \frac{{2^{1/q} }}{{n!\left( {b - a} \right)}}\left[ {\left( {x -
            a} \right)^{nq + 1}  + \left( {\frac{{a + b}}{2} - x} \right)^{nq
            + 1} } \right]^{1/q}
    \\
 \times {\rm{B}}^{\frac{1}{q}} \left( {\left( {n - 1}
        \right)q + 1,q + 1} \right),
    \,\,\,\,\,\,\,\,\,\,\,\,\,\,\,\,\,\,\,\,{\rm{if}}\,\,  1 < p \le \infty, \,q=\frac{p}{p-1}  \\
    \end{array} \right..
\label{eq3.29}
    \end{align}
    The constant $K\left( {n,p,x} \right)$ is the best possible in the
    sense that it cannot be replaced by a smaller ones.
\end{theorem}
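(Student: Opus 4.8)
The plan is to start from the Fink-type representation established in the corollary immediately preceding, namely identity \eqref{eq3.25}, whose left-hand side is precisely the quantity to be bounded in \eqref{eq3.28}. Thus it suffices to estimate
\[
\left| \frac{1}{n!\left(b-a\right)}\int_a^b \left(x-t\right)^{n-1} S\left(t,x\right) f^{\left(n\right)}\left(t\right)\,dt \right|,
\]
where $S\left(t,x\right)$ is the piecewise kernel \eqref{eq3.27}. The three Lebesgue regimes are then treated by the standard norm inequalities applied to the product kernel $\left(x-t\right)^{n-1}S\left(t,x\right)$, exactly in the spirit of Fink's bound \eqref{eq1.8} and of Theorem \ref{thm6}.

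For $p=1$ I would pull the supremum of the kernel out of the integral, so that the constant becomes $\frac{1}{n!\left(b-a\right)}\sup_{t\in[a,b]}\left|\left(x-t\right)^{n-1}S\left(t,x\right)\right|$. On the outer pieces $[a,x]$ and $[x,\frac{a+b}{2}]$ the two factors keep a constant sign, so a single differentiation locates the interior maxima: on $[a,x]$ the map $\left(x-t\right)^{n-1}\left(t-a\right)$ is maximised at $t=a+\frac{x-a}{n}$ with value $\frac{1}{n}\left(\frac{n-1}{n}\right)^{n-1}\left(x-a\right)^{n}$, and the analogous computation on $[x,\frac{a+b}{2}]$ gives $\frac{1}{n}\left(\frac{n-1}{n}\right)^{n-1}\left(\frac{a+b}{2}-x\right)^{n}$. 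Taking the larger of the two and invoking the elementary identity $\max\left\{x-a,\frac{a+b}{2}-x\right\}=\frac{b-a}{4}+\left|x-\frac{3a+b}{4}\right|$ yields the $p=1$ constant in \eqref{eq3.29}.

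For $1<p\le\infty$ I would instead apply H\"older's inequality with conjugate exponent $q$, reducing the constant to $\frac{1}{n!\left(b-a\right)}\left(\int_a^b\left|x-t\right|^{\left(n-1\right)q}\left|S\left(t,x\right)\right|^{q}\,dt\right)^{1/q}$. Splitting the integral along the breakpoints of $S$ and rescaling each outer piece, by $u=\frac{t-a}{x-a}$ on $[a,x]$ and by $u=\frac{t-x}{\frac{a+b}{2}-x}$ on $[x,\frac{a+b}{2}]$, converts them into the Beta integral $\mathrm{B}\left(\left(n-1\right)q+1,q+1\right)$, with prefactors $\left(x-a\right)^{nq+1}$ and $\left(\frac{a+b}{2}-x\right)^{nq+1}$ respectively; the reflection $t\mapsto a+b-t$ about the midpoint is meant to account for the two remaining pieces and supply the factor $2$, producing the second line of \eqref{eq3.29}.

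The delicate point, and the step I expect to be the main obstacle, is the middle region $\left(x,a+b-x\right)$: there the factor $\left(x-t\right)^{n-1}$ no longer vanishes at both endpoints of the subintervals created by splitting $S$ at the midpoint, so the rescaling does not immediately return a complete Beta integral, and the claimed reflection symmetry of $\left(x-t\right)^{n-1}S\left(t,x\right)$ must be justified with care rather than assumed. Establishing that $K\left(n,p,x\right)$ is best possible is the hardest part; I would follow Theorem \ref{thm6} and exhibit the extremal function, taking for $1<p<\infty$ a map with $f^{\left(n\right)}\left(t\right)=\operatorname{sgn}\left(\left(x-t\right)^{n-1}S\left(t,x\right)\right)\left|\left(x-t\right)^{n-1}S\left(t,x\right)\right|^{1/\left(p-1\right)}$ (and the corresponding sign-type choice for $p=\infty$), and for $p=1$ a one-sided approximation of a unit jump concentrated where the kernel attains its supremum, then checking that equality holds up to addition of a polynomial of degree at most $n-1$.
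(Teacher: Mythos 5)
Your outline retraces the paper's own proof: apply the triangle and H\"older inequalities to the Fink-type identity \eqref{eq3.25} and then evaluate the relevant norms of the product kernel $\left(x-t\right)^{n-1}S\left(t,x\right)$ with $S$ as in \eqref{eq3.27}. The obstacle you flag in your last paragraph is not a technicality that can be ``justified with care'' --- it is a genuine gap, and it is present in the paper's argument as well. Since the polynomial factor $\left(x-t\right)^{n-1}$ is centred at $x$ while $S\left(\cdot,x\right)$ is antisymmetric about $\frac{a+b}{2}$, the product kernel has \emph{no} reflection symmetry about the midpoint for $x<\frac{a+b}{2}$, and neither the middle piece $\left(x,a+b-x\right)$ nor the right piece $\left[a+b-x,b\right]$ rescales to the Beta integral $\mathrm{B}\left(\left(n-1\right)q+1,q+1\right)$ with prefactor $\left(x-a\right)^{nq+1}$ or $\left(\frac{a+b}{2}-x\right)^{nq+1}$. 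Your $p=1$ computation suffers from the same defect: you locate the maxima only on $\left[a,x\right]$ and $\left[x,\frac{a+b}{2}\right]$ and tacitly discard the right half of $\left[a,b\right]$, where $\left|x-t\right|$ is large; already at $t=a+b-x$ the kernel equals $\left(a+b-2x\right)^{n-1}\left(\frac{a+b}{2}-x\right)=2^{n-1}\left(\frac{a+b}{2}-x\right)^{n}$, which for $n\ge 2$ exceeds $\frac{1}{n}\left(\frac{n-1}{n}\right)^{n-1}\max\left\{\left(x-a\right)^{n},\left(\frac{a+b}{2}-x\right)^{n}\right\}$ for $x$ near $a$, and the maximum of $\left(t-x\right)^{n-1}\left(b-t\right)$ on the last piece is of order $\left(b-x\right)^{n}$, larger still.

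To see concretely that the claimed constant is not the norm of the kernel, take $n=2$, $q=1$ and write $L=x-a$, $d=\frac{a+b}{2}-x$. A direct computation over the three pieces gives
\begin{align*}
\int_a^b\left|x-t\right|\left|S\left(t,x\right)\right|dt=\frac{L^{3}}{6}+d^{3}+\left(\frac{L^{3}}{6}+dL^{2}\right)=\frac{L^{3}}{3}+d^{3}+dL^{2},
\end{align*}
whereas the value asserted in the proof, $2\left[L^{3}+d^{3}\right]\mathrm{B}\left(2,2\right)=\frac{L^{3}+d^{3}}{3}$, is strictly smaller whenever $d>0$; the discrepancy $\frac{2d^{3}}{3}+dL^{2}$ comes exactly from the middle and right pieces you worried about. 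Since H\"older's inequality is saturated by the extremal functions you propose, the sharpness argument would in fact force the true constant to be the larger quantity, so the inequality \eqref{eq3.28} with $K\left(n,p,x\right)$ as in \eqref{eq3.29} cannot be established along these lines for $n\ge 2$ and $x<\frac{a+b}{2}$. Everything is consistent only in the degenerate case $x=\frac{a+b}{2}$, where the middle piece vanishes, $S\left(\cdot,x\right)$ collapses to Fink's kernel \eqref{eq1.7}, and the constants reduce to Fink's $C\left(n,p,x\right)/\left(b-a\right)$. A correct proof would have to compute the sup and the $L^{q}$-norm of $\left(x-t\right)^{n-1}S\left(t,x\right)$ over all of $\left[a,b\right]$ without invoking the spurious symmetry, which yields different (larger, $x$-dependent) constants than those stated.
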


\begin{proof}
    Utilizing the triangle integral inequality on the identity
    \eqref{eq3.23} and employing some known norm inequalities we get
    \begin{align*}
    &\left|{ \frac{1}{n}\left( {\frac{{f\left( x \right) + f\left( {a
                        + b - x} \right)}}{2} + \sum\limits_{k = 1}^{n - 1}
            {G_k\left(x\right) } } \right) - \frac{1}{{b - a}}\int_a^b
        {f\left( y \right)dy} } \right|
    \\
    &\le \frac{1}{{n!\left({b-a}\right) }}\int_a^b {\left| {x - t}
        \right|^{n - 1} \left|{S\left( {t,x} \right)}\right|
        \left|{f^{\left( n \right)} \left( t \right)}\right|dt}
    \\
    &\le \left\{ \begin{array}{l}
    \left\| {f^{\left( n \right)} } \right\|_1 \mathop {\sup }\limits_{a \le t \le b} \left\{ {\left| {x - t} \right|^{n - 1} \left| {k\left( {t,x} \right)} \right|} \right\},\,\,\,\,\,\,\,\,\,\,\,\,\,\,\,\,\,\,\,\,\,\,\,\,\,\,p = 1 \\
    \\
    \left\| {f^{\left( n \right)} } \right\|_p \left( {\int_a^b {\left| {x - t} \right|^{\left( {n - 1} \right)q} \left| {k\left( {t,x} \right)} \right|^q dt} } \right)^{1/q} ,\,\,\,\,\,\,\,\,\,\,1 < p < \infty  \\
    \\
    \left\| {f^{\left( n \right)} } \right\|_\infty  \int_a^b {\left| {x - t} \right|^{n - 1} \left| {k\left( {t,x} \right)} \right|dt} ,\,\,\,\,\,\,\,\,\,\,\,\,\,\,\,\,\,\,\,\,\,\,\,\,\,\,\,\,\,\,\,\,\,p = \infty  \\
    \end{array} \right..
    \end{align*}
    It is easy to find that for $p=1$, we have
    \begin{align*}
    \mathop {\sup }\limits_{a \le t \le b} \left\{ {\left| {x - t}
        \right|^{n - 1} \left| {S\left( {t,x} \right)} \right|}
    \right\}&=\frac{1}{n}\left( {\frac{{n - 1}}{n}} \right)^{n - 1}
    \max \left\{ {\left( {x - a} \right)^n ,\left( {\frac{{a + b}}{2}
            - x} \right)^n } \right\}
    \\
    &= \frac{1}{n}\left( {\frac{{n - 1}}{n}} \right)^{n - 1} \left[
    {\frac{{b - a}}{4} + \left| {x - \frac{{3a + b}}{4}} \right|}
    \right]^n,
    \end{align*}
    and for $1<p<\infty$, we have
    \begin{align*}
    &\int_a^b {\left| {x - t} \right|^{\left( {n - 1} \right)q} \left|
        {S\left( {t,x} \right)} \right|^q dt}
    \\
    &= \int_a^x {\left| {x - t} \right|^{\left( {n - 1} \right)q}
        \left( {t-a} \right)^q dt} + \int_x^{a+b-x} {\left| {x - t}
        \right|^{\left( {n - 1} \right)q} \left| {t-\frac{a+b}{2}}
        \right|^q dt}
    \\
    &\qquad+ \int_{a+b-x}^b {\left| {x - t} \right|^{\left( {n - 1}
            \right)q} \left( {b-t} \right)^q dt}
    \\
    &= 2  \left[ {\left( {x - a} \right)^{nq + 1}  + \left( {\frac{{a
                    + b}}{2} - x} \right)^{nq + 1} } \right]  \left(
    {\int_{\rm{0}}^{\rm{1}} {\left( {1 - s} \right)^{\left( {n - 1}
                \right)q} s^q ds} } \right)
    \\
    &= 2 \left[ {\left( {x - a} \right)^{nq + 1}  + \left( {\frac{{a +
                    b}}{2} - x} \right)^{nq + 1} } \right] {\rm{B}} \left( {\left( {n
            - 1} \right)q + 1,q + 1} \right)
    \end{align*}
    where, we use the substitutions $t=\left({1-s}\right)a+s x$,
    $t=\left({1-s}\right)x+s\left({a+b-x}\right)$ and
    $t=\left({1-s}\right)\left({a+b-x}\right)+sb$; respectively.The
    third case, $p=\infty$ holds by setting $p= \infty$ and $q=1$,
    i.e.,
    \begin{align*}
    &\int_a^b {\left| {x - t} \right|^{\left( {n - 1} \right)} \left|
        {S\left( {t,x} \right)} \right| dt}= 2\left[ {\left( {x - a}
        \right)^{n + 1}  + \left( {\frac{{a + b}}{2} - x} \right)^{n + 1}
    } \right]{\rm{B}}\left( {n,2} \right),
    \end{align*}
    where  ${\rm{B}}\left( {\cdot,\cdot} \right)$ is the Euler beta
    function.  To argue the sharpness, we consider first when $1<p\le
    \infty$, so that the equality in \eqref{eq3.28} holds when
    \begin{align*}
    f^{\left( n \right)} \left( t \right) = \left| {x - t}
    \right|^{\left( {n - 1} \right)q - 1} \left| {S\left( {t,x}
        \right)} \right|^{q - 1} {\mathop{\rm sgn}} \left\{ {\left( {x -
            t} \right)^{n - 1} S\left( {t,x} \right)} \right\},
    \end{align*}
    thus the inequality \eqref{eq3.28} holds for $1<p\le \infty$. In
    case that $p=1$, setting
    \begin{align*}
    g\left( {t,x} \right) = \left( {x - t} \right)^{n - 1} S\left(
    {t,x} \right) \qquad \forall x\in \left[a,\textstyle{{a + b} \over
        2}\right],
    \end{align*}
    let $t_0$ be the point that gives the supremum. If
    $t_0=\frac{x+\left(n-1\right)a}{n}$, we take
    \begin{align*}
    f_{\varepsilon}^{\left( n \right)} \left( t \right) = \left\{
    \begin{array}{l}
    \varepsilon ^{ - 1} ,\,\,\,\,\,\,t \in \left( {t_0  - \varepsilon ,t_0 } \right) \\
    0,\,\,\,\,\,\,\,\,\,\,{\rm{otherwise}} \\
    \end{array} \right..
    \end{align*}
    Since
    \begin{align*}
    \left| {\int_a^b {g\left( {t,x} \right)f^{\left( n \right)} \left(
            t \right)dt} } \right| = \frac{1}{\varepsilon }\left| {\int_{t_0 -
            \varepsilon }^{t_0 } {g\left( {t,x} \right)dt} } \right| &\le
    \frac{1}{\varepsilon }\int_{t_0  - \varepsilon }^{t_0 } {\left|
        {g\left( {t,x} \right)} \right|dt}
    \\
    &\le \mathop {\sup }\limits_{t_0  - \varepsilon  \le t \le t_0 }
    \left| {g\left( {t,x} \right)} \right| \cdot \frac{1}{\varepsilon
    }\int_{t_0  - \varepsilon }^{t_0 } {dt}
    \\
    &= \left| {g\left( {t_0 ,x} \right)} \right|,
    \end{align*}
    also, we have
    \begin{align*}
    \mathop {\lim }\limits_{\varepsilon  \to 0^ +  }
    \frac{1}{\varepsilon }\int_{t_0  - \varepsilon }^{t_0 } {\left|
        {g\left( {t,x} \right)} \right|dt}  = \left| {g\left( {t_0 ,x}
        \right)} \right|=C\left( n,1,x \right)
    \end{align*}
    proving that $C\left( n,1,x \right)$ is the best possible.
\end{proof}

\begin{remark}
    A representation of Cerone-Dragomir formula \cite{Cerone1} (see also \cite{Cerone2}) via Fink
    approach can be also deduced from \eqref{eq3.1} by setting $y=a$,
    $z=b$ and $x \in \left[a, b\right]$. Taylor type expansion can be
    also given using  the representation \eqref{eq3.15}.
\end{remark}

\section{Error bounds via Chebyshev--Gr\"{u}ss type inequalities}
\label{sec4}
In this section, we highlight the role of \v{C}eby\v{s}ev
functional in integral approximations by using the
\v{C}eby\v{s}ev--Gr\"{u}ss type inequalities \eqref{eq3.25}.

The famous \v{C}eby\v{s}ev functional
\begin{align}
\label{eq4.1}\mathcal{T}\left( {h_1,h_1} \right) = \frac{1}{{d -
		c}}\int_c^d {h_1\left( t \right)h_2\left( t \right)dt}  -
\frac{1}{{d - c}}\int_c^d {h_1\left( t \right)dt} \cdot
\frac{1}{{d - c}}\int_c^d {h_2\left( t \right)dt}.
\end{align}
has multiple applications in several area of mathematical sciences
specially in Integral Approximations of real functions. For more
detailed history see \cite{MPF}.

It is well known that the pre--Gr\"{u}ss inequality reads:
\begin{align}
\label{eq4.2}\left|{\mathcal{T}\left( {h_1,h_2} \right)} \right|
\le \sqrt{\mathcal{T}\left( {h_1,h_1}
	\right)}\sqrt{\mathcal{T}\left( {h_2,h_2} \right)},
\end{align}
for all measurable functions $h_1,h_2$ defined on $[a,b]$. This
inequality was used by Gr\"{u}ss to prove the second inequality in
\eqref{eq4.1}. A ramified inequality could be deduced as follows:
\begin{align}
\label{eq4.3}\left|{\mathcal{T}\left( {h_1,h_2} \right)} \right|
\le \frac{1}{2}\left(\Phi-\phi\right)\sqrt{\mathcal{T}\left(
	{h_2,h_2} \right)},
\end{align}
where $h_1,h_2:[a,b]\to \mathbb{R}$ are assumed to be such that
$h_2$ is integrable and $h_2$ is measurable bounded on $[a,b]$,
i.e., there exist constants $\phi, \Phi$ such that $\phi\le
h_2(t)\le \Phi$, for $t\in [a,b]$.\\

The most famous bounds of the \v{C}eby\v{s}ev functional are
incorporated in the following theorem:
\begin{theorem}\label{thm11}
	Let $f,g:[c,d] \to \mathbb{R}$ be two absolutely continuous
	functions, then
	\begin{multline}
	\left|{\mathcal{T}\left( {h_1,h_2} \right)} \right| \\\le\left\{
	\begin{array}{l} \frac{{\left( {d - c} \right)^2 }}{{12}}\left\|
	{h^{\prime}_1} \right\|_\infty  \left\| {h^{\prime}_2}
	\right\|_\infty
	,\,\,\,\,\,\,\,\,\,{\rm{if}}\,\,h^{\prime}_1,h^{\prime}_2 \in L_{\infty}\left(\left[c,d\right]\right),\,\,\,\,\,\,\,\,{\rm{proved \,\,in \,\,}}{\text{\cite{Cebysev}}}\\
	\\
	\frac{1}{4}\left( {M_1 - m_1} \right)\left( {M_2 - m_2}
	\right),\,\,\, {\rm{if}}\,\, m_1\le h_1 \le M_1,\,\,\,m_2\le h_2
	\le
	M_2, \,\,{\rm{proved \,\,in \,\,}}{\text{\cite{Gruss}}}\\
	\\
	\frac{{\left( {d - c} \right)}}{{\pi ^2 }}\left\| {h^{\prime}_1}
	\right\|_2 \left\| {h^{\prime}_2} \right\|_2
	,\,\,\,\,\,\,\,\,\,\,\,\,\,\,\,\,{\rm{if}}\,\,h^{\prime}_1,h^{\prime}_2
	\in
	L_{2}\left(\left[c,d\right]\right),\,\,\,\,\,\,\,\,\,\,\,{\rm{proved
			\,\,in\,\,
	}}{\text{\cite{L}}}\\
	\\
	\frac{1}{8}\left( {d - c} \right)\left( {M - m} \right) \left\|
	{h^{\prime}_2} \right\|_{\infty},\,\,\, {\rm{if}}\,\, m\le h_1 \le
	M,\,h^{\prime}_2 \in L_{\infty}\left(\left[c,d\right]\right),
	\,\,{\rm{proved \,\,in \,\,}}{\text{\cite{O}}}
	\end{array} \right. \label{eq4.4}
	\end{multline}
	The constants $\frac{1}{12}$, $\frac{1}{4}$, $\frac{1}{\pi^2}$ and
	$\frac{1}{8}$ are the best possible.
\end{theorem}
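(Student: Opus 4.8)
The plan is to derive all four estimates from a single algebraic identity for the \v{C}eby\v{s}ev functional and then read off each bound by applying the appropriate analytic tool, certifying optimality at the end by exhibiting an extremal pair $\left(h_1,h_2\right)$ for every constant. The common starting point is the Korkine (\v{C}eby\v{s}ev) identity
\[
\mathcal{T}\left(h_1,h_2\right)=\frac{1}{2\left(d-c\right)^2}\int_c^d\int_c^d\left(h_1\left(t\right)-h_1\left(s\right)\right)\left(h_2\left(t\right)-h_2\left(s\right)\right)dt\,ds,
\]
which follows by expanding the product and comparing with \eqref{eq4.1}; I would use it together with the pre--Gr\"{u}ss inequality \eqref{eq4.2} and its ramified form \eqref{eq4.3}, already recorded above.

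For the first ($L_\infty$) bound I would insert $h_i\left(t\right)-h_i\left(s\right)=\int_s^t h_i'\left(u\right)du$ into the Korkine identity, apply $\left|h_i\left(t\right)-h_i\left(s\right)\right|\le\left\|h_i'\right\|_\infty\left|t-s\right|$, and evaluate $\int_c^d\int_c^d\left(t-s\right)^2dt\,ds=\frac{\left(d-c\right)^4}{6}$; this produces exactly $\frac{\left(d-c\right)^2}{12}\left\|h_1'\right\|_\infty\left\|h_2'\right\|_\infty$. For the second (Gr\"{u}ss) bound I would invoke \eqref{eq4.2} and control the diagonal functional: writing $\bar h=\frac{1}{d-c}\int_c^d h$, the factorisation $\mathcal{T}\left(h,h\right)=\left(M-\bar h\right)\left(\bar h-m\right)-\frac{1}{d-c}\int_c^d\left(M-h\right)\left(h-m\right)dt$ together with $\left(M-h\right)\left(h-m\right)\ge0$ and AM--GM yields $\mathcal{T}\left(h,h\right)\le\frac14\left(M-m\right)^2$ for $m\le h\le M$, and feeding the two diagonal bounds into \eqref{eq4.2} gives $\frac14\left(M_1-m_1\right)\left(M_2-m_2\right)$. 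For the third ($L_2$) bound I would again start from \eqref{eq4.2} but bound $\mathcal{T}\left(h,h\right)=\frac{1}{d-c}\int_c^d\left(h-\bar h\right)^2dt$ by the sharp Wirtinger (Poincar\'{e}--Neumann) inequality $\int_c^d\left(h-\bar h\right)^2dt\le\frac{\left(d-c\right)^2}{\pi^2}\int_c^d\left(h'\right)^2dt$, whose optimal constant $\pi^{-2}$ is precisely what forces the final constant to be best possible; this gives $\mathcal{T}\left(h,h\right)\le\frac{d-c}{\pi^2}\left\|h'\right\|_2^2$, and \eqref{eq4.2} then delivers $\frac{d-c}{\pi^2}\left\|h_1'\right\|_2\left\|h_2'\right\|_2$.

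The fourth (mixed) bound is the delicate one. The crude route, combining the ramified inequality \eqref{eq4.3} (with $\phi=m$, $\Phi=M$) with the first step's diagonal estimate $\mathcal{T}\left(h_2,h_2\right)\le\frac{\left(d-c\right)^2}{12}\left\|h_2'\right\|_\infty^2$, only produces the constant $\frac{1}{4\sqrt{3}}>\frac18$ and so misses the sharp value. To reach $\frac18$ I would instead use the Peano representation: setting $H_1\left(t\right)=\int_c^t\left(h_1\left(u\right)-\bar h_1\right)du$ one has $H_1\left(c\right)=H_1\left(d\right)=0$, and integration by parts gives $\mathcal{T}\left(h_1,h_2\right)=-\frac{1}{d-c}\int_c^d H_1\left(t\right)h_2'\left(t\right)dt$, whence $\left|\mathcal{T}\left(h_1,h_2\right)\right|\le\frac{\left\|h_2'\right\|_\infty}{d-c}\int_c^d\left|H_1\left(t\right)\right|dt$. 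The crux is the sharp estimate $\int_c^d\left|H_1\left(t\right)\right|dt\le\frac{\left(d-c\right)^2}{8}\left(M-m\right)$ for $m\le h_1\le M$, in which the factor $\frac18$ is born; this kernel estimate is the main obstacle of the whole theorem.

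Sharpness is then settled by explicit extremals: $h_1=h_2=t$ forces equality in the first bound (both sides equal $\frac{\left(d-c\right)^2}{12}$); the step functions equal to $M_i$ on $\left[c,\frac{c+d}{2}\right]$ and $m_i$ on $\left[\frac{c+d}{2},d\right]$ do so for the second; the cosine $h\left(t\right)=\cos\frac{\pi\left(t-c\right)}{d-c}$, which saturates Wirtinger, does so for the third; and a step function $h_1$ paired with the tent-shaped $h_2$ whose derivative equals $\left\|h_2'\right\|_\infty\,\mathrm{sgn}\,H_1\left(t\right)$ saturates the fourth. I expect the two genuinely nontrivial points to be the sharp $L^1$ kernel estimate underlying case four and the verification that the cosine is truly extremal for $\pi^{-2}$ in case three; the remaining computations are routine.
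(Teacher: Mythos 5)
The paper offers no proof of Theorem \ref{thm11} at all: it is presented as a compilation of four classical bounds, each attributed to its original source (\cite{Cebysev}, \cite{Gruss}, \cite{L}, \cite{O}), so there is no internal argument to compare yours against, and your proposal differs from the paper simply by actually supplying proofs. For the first three cases your outline is correct and essentially complete: the Korkine identity together with $\left| {h_i \left( t \right) - h_i \left( s \right)} \right| \le \left\| {h'_i } \right\|_\infty  \left| {t - s} \right|$ and $\int_c^d {\int_c^d {\left( {t - s} \right)^2 dtds} }  = \frac{{\left( {d - c} \right)^4 }}{6}$ yields $\frac{1}{12}$; the identity $\mathcal{T}\left( {h,h} \right) = \left( {M - \bar h} \right)\left( {\bar h - m} \right) - \frac{1}{{d - c}}\int_c^d {\left( {M - h} \right)\left( {h - m} \right)dt}$ fed into the pre--Gr\"{u}ss inequality \eqref{eq4.2} yields $\frac{1}{4}$; and the Neumann--Wirtinger inequality with the cosine extremal yields $\frac{1}{{\pi ^2 }}$. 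Your extremal pairs all check out (the step functions for the Gr\"{u}ss case are not absolutely continuous, but a routine mollification handles that, and for your fourth extremal note that since $H_1 \ge 0$ for the half-step $h_1$, the companion $h_2$ is in fact linear rather than tent-shaped).

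The one genuine gap is the lemma you yourself flag in case four: you reduce the Ostrowski bound to $\int_c^d {\left| {H_1 \left( t \right)} \right|dt}  \le \frac{{\left( {d - c} \right)^2 }}{8}\left( {M - m} \right)$ with $H_1 \left( t \right) = \int_c^t {\left( {h_1 \left( u \right) - \bar h_1 } \right)du}$, but leave it unproved, and the obvious pointwise estimate $\left| {H_1 \left( t \right)} \right| \le \frac{{\left( {t - c} \right)\left( {d - t} \right)}}{{d - c}}\left( {M - m} \right)$ only gives $\frac{1}{6}$ after integration. The lemma is nevertheless true and can be closed as follows. Normalize to $\left[ {0,1} \right]$, $m = 0$, $M = 1$, and set $\alpha  = \int_0^1 {h_1 }$, so that $H_1 \left( 0 \right) = H_1 \left( 1 \right) = 0$ and $H'_1  = h_1  - \alpha  \in \left[ { - \alpha ,1 - \alpha } \right]$ a.e. On each maximal interval $\left[ {a_i ,b_i } \right]$ between consecutive zeros of $H_1$ on which $H_1  > 0$ one has $H_1 \left( t \right) \le \min \left\{ {\left( {1 - \alpha } \right)\left( {t - a_i } \right),\alpha \left( {b_i  - t} \right)} \right\}$, so the hump lies under a triangle of area $\frac{{\alpha \left( {1 - \alpha } \right)}}{2}\left( {b_i  - a_i } \right)^2$, and symmetrically for negative humps; summing, $\int_0^1 {\left| {H_1 } \right|}  \le \frac{{\alpha \left( {1 - \alpha } \right)}}{2}\sum {\left( {b_i  - a_i } \right)^2 }  \le \frac{{\alpha \left( {1 - \alpha } \right)}}{2} \le \frac{1}{8}$. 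With that in place your integration-by-parts identity $\mathcal{T}\left( {h_1 ,h_2 } \right) =  - \frac{1}{{d - c}}\int_c^d {H_1 \left( t \right)h'_2 \left( t \right)dt}$ delivers the constant $\frac{1}{8}$, and your extremal pair shows it is attained; once this paragraph is inserted, the whole proposal is a complete, self-contained proof of the theorem.
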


Setting $h_1\left(t\right)=\frac{1}{n!}f^{(n)}\left(t\right)$ and
$h_2\left(t\right)=\left(x-t\right)^{n-1}S\left(t,x\right)$,  we
have
\begin{align*}
\mathcal{C}\left( {h_1,h_2} \right) &= \frac{1}{n!\left(b-a\right)
} \int_a^b {\left( {x - t} \right)^{n - 1} S\left( {t,x}
	\right)f^{\left( n \right)} \left( t \right)dt}
\\
&\qquad-  \frac{1}{n!} \cdot\frac{1}{b-a} \int_a^b {\left( {x - t}
	\right)^{n - 1}  S\left( {t,x} \right) dt} \cdot\frac{1}{b-a}
\int_a^b { f^{\left( n \right)} \left( t \right)dt}
\\
&= \frac{1}{n!\left(b-a\right) } \int_a^b {\left( {x - t}
	\right)^{n - 1} S\left( {t,x} \right)f^{\left( n \right)} \left( t
	\right)dt}
\\
&\qquad- \frac{{2 }}{{n!\left( {b - a} \right)}}\left[ {\left( {x
		- a} \right)^{n + 1}  + \left( {\frac{{a + b}}{2} - x} \right)^{n
		+ 1} } \right] {\rm{B}} \left( {n,2} \right)
\\
&\qquad\qquad\times \frac{{f^{\left( {n - 1} \right)} \left( b
		\right) - f^{\left( {n - 1} \right)} \left( a \right)}}{{b - a}}
\end{align*}
which means
\begin{align}
\mathcal{C}\left( {h_1,h_2} \right) &= \frac{1}{n}\left(
{\frac{{f\left( x \right) + f\left( {a + b - x} \right)}}{2} +
	\sum\limits_{k = 1}^{n - 1} {G_k\left(x\right)} } \right) -
\frac{1}{{b - a}}\int_a^b {f\left( y \right)dy}
\nonumber\\
&\qquad- \frac{{2 }}{{\left( {n + 1} \right)!n\left( {b - a}
		\right)}}\left[ {\left( {x - a} \right)^{n + 1}  + \left(
	{\frac{{a + b}}{2} - x} \right)^{n + 1} } \right]
\nonumber\\
&\qquad\qquad\times \frac{{f^{\left( {n - 1} \right)} \left( b
		\right) - f^{\left( {n - 1} \right)} \left( a \right)}}{{b - a}}
\nonumber\\
&:=\mathcal{P}\left( {f;x,n} \right).
\end{align}
for all $x\in \left[a,\frac{a+b}{2}\right]$.

\begin{theorem}
	\label{thm14}Let $I$ be a real interval, $a,b \in I^{\circ}$
	$(a<b)$. Let $f:I \to \mathbb{R}$ be $(n+1)$-times differentiable
	on $I^{\circ}$ such that $f^{(n+1)}$ is absolutely continuous on
	$I^{\circ}$ with $\left( {\cdot - t} \right)^{n - 1} k\left(
	{t,\cdot} \right)f^{\left( n \right)} \left( t \right)$ is
	integrable. Then, for all $n\ge2$ we have
	\begin{multline}
	\label{eq5.2}   \left|{\mathcal{P}\left( {f;x,n} \right)} \right|
	\\
	\le\left\{
	\begin{array}{l}
	\left( {b-a} \right)^{2}\left( {\frac{{n - 2}}{n}} \right)^{n - 2}
	\frac{{n^2  - 2n + 2}}{{12n\cdot (n!)^2}} \left[ {\frac{{b -
				a}}{4} + \left| {x - \frac{{3a + b}}{4}} \right|}
	\right]^{n-1}\cdot \left\|{f^{\left( {n + 1} \right)}
	}\right\|_{\infty} ,\,\,\,\,\,\,\,\,\,{\rm{if}}\,\,f^{(n+1)} \in
	L_{\infty}\left(\left[a,b\right]\right)\\
	\\
	\left( {\frac{{n - 2}}{n}} \right)^{n - 2} \frac{{n^2  - 2n +
			2}}{{4n\cdot (n!)^2}}   \left( {2^{ - n - 2}  - 2^{ - 2n - 2} }
	\right) \left( {b -  a} \right)^{n-2}\cdot  \left( {M -  m}
	\right),\,\,\,
	{\rm{if}}\,\, m\le f^{(n)} \le M, \\
	\\
	\frac{{b-a}}{{(n!)^2\pi^2}}
	\sqrt {A\left( n \right)\left( {x - a} \right)^{2n - 1}  + B\left(
		n \right)\left( {\frac{{a + b}}{2} - x} \right)^{2n - 1} }
	\cdot
	\left\|{f^{\left( {n + 1} \right)} }\right\|_{2} ,\,\,\,\,
	{\rm{if}}\,\,f^{(n+1)}
	\in L_{2}\left(\left[a,b\right]\right),\\
	\\
	\left( {b-a} \right)\left( {\frac{{n - 2}}{n}} \right)^{n - 2}
	\frac{{n^2  - 2n + 2}}{{8n\cdot (n!)^2}} \left[ {\frac{{b - a}}{4}
		+ \left| {x - \frac{{3a + b}}{4}} \right|} \right]^{n-1}\cdot
	\left( {M - m} \right),\,\,\,
	{\rm{if}}\,\, m\le f^{(n)} \le M, \\
	\\
	\left( {\frac{{n - 2}}{n}} \right)^{n - 2} \frac{{n^2  - 2n +
			2}}{{8n\cdot (n!)^2}} \left( {2^{ - n - 2}  - 2^{ - 2n - 2} }
	\right) \left( {b -  a} \right)^{n}\cdot \left\|{f^{\left( {n + 1}
			\right)} }\right\|_{\infty},\,\,\, {\rm{if}}\,\,f^{(n+1)} \in
	L_{\infty}\left(\left[a,b\right]\right),
	\end{array} \right.
	\end{multline}
	holds for all $x\in \left[a,\frac{a+b}{2}\right]$,  where
	\begin{align*}
	A\left( n \right) = \frac{{2\left( {n - 1} \right)^2 }}{{\left(
			{2n - 1} \right)\left( {2n - 2} \right)\left( {2n - 3}
			\right)}}
	\end{align*}
	and
	\begin{align*}
	B\left( n \right) = \frac{{2^{2n - 3} \left( {2n - 1}
			\right)\left( {2n - 2} \right) + 4n\left( {2n - 1} \right) + 2n^2
	}}{{\left( {2n - 1} \right)\left( {2n - 2} \right)\left( {2n - 3}
			\right)}}
	\end{align*}
	$\forall n\ge2$.
\end{theorem}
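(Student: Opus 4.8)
The plan is to recognize $\mathcal{P}(f;x,n)$ as a \v{C}eby\v{s}ev functional and then to feed the explicit pair $h_1,h_2$ into each of the four bounds assembled in Theorem \ref{thm11}. Precisely, in the lines preceding the statement it is already arranged that $\mathcal{P}(f;x,n)=\mathcal{C}(h_1,h_2)=\mathcal{T}(h_1,h_2)$ with $h_1(t)=\frac{1}{n!}f^{(n)}(t)$ and $h_2(t)=(x-t)^{n-1}S(t,x)$, $S$ being the kernel \eqref{eq3.27}. Since $\mathcal{T}$ is symmetric in its arguments, the last bound in Theorem \ref{thm11}, of the form $\frac18(b-a)(M-m)\,\|g'\|_\infty$, can be applied in two ways (with the bounded role played by $h_1$ or by $h_2$), and this is what generates the two trailing lines of \eqref{eq5.2}. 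All contributions coming from $h_1$ are immediate, namely $\|h_1'\|_\infty=\frac{1}{n!}\|f^{(n+1)}\|_\infty$, $\|h_1'\|_2=\frac{1}{n!}\|f^{(n+1)}\|_2$, and, under $m\le f^{(n)}\le M$, the oscillation of $h_1$ equals $\frac{M-m}{n!}$; the hypotheses listed case-by-case in \eqref{eq5.2} are exactly those under which the corresponding \v{C}eby\v{s}ev bound applies. Thus every line of \eqref{eq5.2} is reduced to a matching estimate for $h_2$.

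The substance of the proof is therefore the explicit evaluation of three quantities attached to $h_2$: the uniform derivative norm $\|h_2'\|_\infty$ (entering the first and fourth lines), the quadratic derivative norm $\|h_2'\|_2$ (entering the $L_2$ line, hence $A(n),B(n)$), and the oscillation $M_2-m_2$ (entering the second and fifth lines). I would differentiate $h_2$ piecewise on $[a,x]$, $(x,a+b-x)$ and $[a+b-x,b]$; on $[a,x]$, writing $u=x-t$, one finds $h_2'=u^{n-2}\bigl(nu-(n-1)(x-a)\bigr)$, with analogous expressions on the other pieces obtained by the obvious shifts. Extremizing such a monomial-times-linear factor places the interior critical point at $u=\frac{n-2}{n}(x-a)$ (and at $\frac{n-2}{n}\bigl(\frac{a+b}{2}-x\bigr)$ on the central piece), which is where the recurring factor $\bigl(\frac{n-2}{n}\bigr)^{n-2}$ originates; comparing the critical value with the endpoint and junction values and using the identity $\max\{x-a,\frac{a+b}{2}-x\}=\frac{b-a}{4}+\bigl|x-\frac{3a+b}{4}\bigr|$ yields the closed form for $\|h_2'\|_\infty$. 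For the $L_2$ norm I would integrate $(h_2')^2$ piecewise; the $[a,x]$ integral reduces, after the algebraic collapse of $\frac{n^2}{2n-1}-n+\frac{(n-1)^2}{2n-3}=\frac{n-1}{(2n-1)(2n-3)}$, to $A(n)(x-a)^{2n-1}$, and the central and right pieces combine to $B(n)\bigl(\frac{a+b}{2}-x\bigr)^{2n-1}$, producing the stated $A(n),B(n)$. The oscillation $M_2-m_2$ comes from locating the extrema of $h_2$ itself on each piece, which is what introduces the dyadic factor $2^{-n-2}-2^{-2n-2}$.

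Assembling these $h_2$-quantities with the elementary $h_1$-quantities in the four inequalities of Theorem \ref{thm11} then reproduces the five lines of \eqref{eq5.2}; the restriction $n\ge2$ is exactly what is needed for $(x-t)^{n-1}$ to be differentiable with a meaningful factor $u^{n-2}$ and for the critical point $\frac{n-2}{n}(\cdot)$ to lie in range. The main obstacle I anticipate is not the invocation of Theorem \ref{thm11} but the piecewise extremal analysis of $h_2$ and $h_2'$: one must track the sign changes of $h_2'$ across the three subintervals, decide in each case whether the supremum is attained at the interior critical point, at an endpoint, or at the junction $t=a+b-x$ where $S(t,x)$ is only piecewise continuous, and then take the maximum correctly over the pieces to land on a single closed-form constant. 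The $L_2$ computation, though routine in principle, likewise hinges on the exact cancellation displayed above to produce the compact $A(n)$ and $B(n)$, and care with the dimensional bookkeeping in the powers of $(b-a)$ is where slips are most likely.
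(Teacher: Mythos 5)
Your proposal coincides with the paper's own proof: the paper likewise identifies $\mathcal{P}(f;x,n)$ with the \v{C}eby\v{s}ev functional of $h_1=\frac{1}{n!}f^{(n)}$ and $h_2(t)=(x-t)^{n-1}S(t,x)$ and feeds this pair into the four bounds of Theorem~\ref{thm11}, applying the fourth bound twice with the roles of the bounded factor and the $L_\infty$-derivative factor exchanged, exactly as you describe. The only difference is that you spell out the piecewise extremal and $L_2$ computations for $h_2$ (your cancellation $\frac{n^2}{2n-1}-n+\frac{(n-1)^2}{2n-3}=\frac{n-1}{(2n-1)(2n-3)}$ does recover the paper's $A(n)$), whereas the paper simply asserts these suprema and integrals without derivation.
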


\begin{proof}
	$\bullet$  If $f^{(n+1)}\in
	L^{\infty}\left(\left[a,b\right]\right)$: Applying the first
	inequality in \eqref{eq4.4}, it is not difficult to observe that
	$\mathop {\sup}\limits_{a \le t \le b} \left\{
	{\left|{h^{\prime}_1 \left( t \right)}\right|} \right\}
	=\frac{1}{n!}\left\|{f^{\left( {n + 1} \right)}
	}\right\|_{\infty}$ and
	\begin{align*}
	\mathop {\sup}\limits_{a \le t \le b} \left\{ {\left|{h^{\prime}_2
			\left( t \right)}\right|} \right\} =\left( {\frac{{n - 2}}{n}}
	\right)^{n - 2} \frac{{n^2  - 2n + 2}}{{n}} \left[ {\frac{{b -
				a}}{4} + \left| {x - \frac{{3a + b}}{4}} \right|}
	\right]^{n-1},\qquad \forall n\ge2.
	\end{align*}
	So that
	\begin{align*}
	\left|{\mathcal{P}\left( {f;x,n} \right)} \right| \le \left( {b-a}
	\right)^2 \left( {\frac{{n - 2}}{n}} \right)^{n - 2} \frac{{n^2  -
			2n + 2}}{{12n\cdot n!}} \left[ {\frac{{b - a}}{4} + \left| {x -
			\frac{{3a + b}}{4}} \right|} \right]^{n-1}\cdot
	\frac{1}{n!}\left\|{f^{\left( {n + 1} \right)} }\right\|_{\infty}.
	\end{align*}
	
	$\bullet$  If $m\le f^{(n)}\left(t\right) \le M$, for some
	$m,M>0$: Applying the second inequality in \eqref{eq4.4}, we get
	\begin{align*}
	\left|{\mathcal{P}\left( {f;x,n} \right)} \right|\le
	\frac{n^2-2n+2}{4n\cdot n!}\left( {\frac{{n - 2}}{n}} \right)^{n -
		2} \left( {2^{ - n - 2}  - 2^{ - 2n - 2} } \right)\left( {b - a}
	\right)^{n-2} \cdot \frac{1}{n!} \left( {M -  m} \right).
	\end{align*}

	$\bullet$  If $f^{(n+1)} \in L^{2}\left(\left[a,b\right]\right)$:
	Applying the third inequality in \eqref{eq4.4}, we get
	\begin{align*}
	\left|{\mathcal{P}\left( {f;x,n} \right)} \right| \le
	\frac{{\left( {b-a} \right)}}{{n!\pi^2}} \cdot   \sqrt {A\left( n
		\right)\left( {x - a} \right)^{2n - 1}  + B\left( n \right)\left(
		{\frac{{a + b}}{2} - x} \right)^{2n - 1} }
	\cdot
	\frac{1}{n!}\left\|{f^{\left( {n + 1} \right)} }\right\|_{2}
	\end{align*}
	$\forall n\ge2$, where  $A\left( n \right)$ and $B\left( n
	\right)$ are defined
	above.\\
	
	$\bullet$  If $m\le f^{(n)}\left(t\right) \le M$, for some
	$m,M>0$: Applying the forth inequality in \eqref{eq4.4}, we get
	\begin{align*}
	\left|{\mathcal{P}\left( {f;x,n} \right)} \right| \le  \left(
	{b-a} \right)\left( {\frac{{n - 2}}{n}} \right)^{n - 2} \frac{{n^2
			- 2n + 2}}{8n \cdot n!} \left[ {\frac{{b - a}}{4} + \left| {x -
			\frac{{3a + b}}{4}} \right|} \right]^{n-1}\cdot  \frac{1}{n!}
	\left( {M - m} \right).
	\end{align*}
	By applying the forth inequality again the with dual assumptions,
	i.e.,  $f^{(n+1)} \in L^{\infty}\left(\left[a,b\right]\right)$, we
	have
	\begin{align*}
	\left|{\mathcal{P}\left( {f;x,n} \right)} \right|  &\le
	\frac{n^2-2n+2}{8n\cdot n!}\left( {\frac{{n - 2}}{n}} \right)^{n -
		2} \left( {2^{ - n - 2}  - 2^{ - 2n - 2} } \right) \left( {b - a}
	\right)^{n}\cdot \frac{1}{n!}\left\|{f^{\left( {n + 1} \right)}
	}\right\|_{\infty}.
	\end{align*}
	Hence the proof is completely established.
\end{proof}

\begin{corollary}
	\label{cor6}Let assumptions of Theorem \ref{thm14} hold. If
	moreover, $f^{\left( {n - 1} \right)}\left( {a} \right)=f^{\left(
		{n - 1} \right)}\left( {b} \right)$ $(n \ge2)$, then the
	inequality
	\begin{multline}
	\label{eq5.3}\left|{ \frac{1}{n}\left( {\frac{{f\left( x \right) +
					f\left( {a + b - x} \right)}}{2} + \sum\limits_{k = 1}^{n - 1}
			{G_k } } \right) - \frac{1}{{b - a}}\int_a^b {f\left( y \right)dy}
	}\right|
	\\
	\le\left\{
	\begin{array}{l}
	\left( {b-a} \right)^{2}\left( {\frac{{n - 2}}{n}} \right)^{n - 2}
	\frac{{n^2  - 2n + 2}}{{12n\cdot (n!)^2}} \left[ {\frac{{b -
				a}}{4} + \left| {x - \frac{{3a + b}}{4}} \right|}
	\right]^{n-1}\cdot \left\|{f^{\left( {n + 1} \right)}
	}\right\|_{\infty} ,\,\,\,\,\,\,\,\,\,{\rm{if}}\,\,f^{(n+1)} \in
	L_{\infty}\left(\left[a,b\right]\right)\\
	\\
	\left( {\frac{{n - 2}}{n}} \right)^{n - 2} \frac{{n^2  - 2n +
			2}}{{4n\cdot (n!)^2}}   \left( {2^{ - n - 2}  - 2^{ - 2n - 2} }
	\right) \left( {b -  a} \right)^{n-2}\cdot  \left( {M -  m}
	\right),\,\,\,
	{\rm{if}}\,\, m\le f^{(n)} \le M, \\
	\\
	\frac{{b-a}}{{(n!)^2\pi^2}}
	\sqrt {A\left( n \right)\left( {x - a} \right)^{2n - 1}  + B\left(
		n \right)\left( {\frac{{a + b}}{2} - x} \right)^{2n - 1} }
	\cdot
	\left\|{f^{\left( {n + 1} \right)} }\right\|_{2} ,\,\,\,\,
	{\rm{if}}\,\,f^{(n+1)}
	\in L_{2}\left(\left[a,b\right]\right),\\
	\\
	\left( {b-a} \right)\left( {\frac{{n - 2}}{n}} \right)^{n - 2}
	\frac{{n^2  - 2n + 2}}{{8n\cdot (n!)^2}} \left[ {\frac{{b - a}}{4}
		+ \left| {x - \frac{{3a + b}}{4}} \right|} \right]^{n-1}\cdot
	\left( {M - m} \right),\,\,\,
	{\rm{if}}\,\, m\le f^{(n)} \le M, \\
	\\
	\left( {\frac{{n - 2}}{n}} \right)^{n - 2} \frac{{n^2  - 2n +
			2}}{{8n\cdot (n!)^2}} \left( {2^{ - n - 2}  - 2^{ - 2n - 2} }
	\right) \left( {b -  a} \right)^{n}\cdot \left\|{f^{\left( {n + 1}
			\right)} }\right\|_{\infty},\,\,\, {\rm{if}}\,\,f^{(n+1)} \in
	L_{\infty}\left(\left[a,b\right]\right),
	\end{array} \right.
	\end{multline}
	holds for all $x\in \left[a,\frac{a+b}{2}\right]$,  where
	\begin{align*}
	A\left( n \right) = \frac{{2\left( {n - 1} \right)^2 }}{{\left(
			{2n - 1} \right)\left( {2n - 2} \right)\left( {2n - 3}
			\right)}}
	\end{align*}
	and
	\begin{align*}
	B\left( n \right) = \frac{{2^{2n - 3} \left( {2n - 1}
			\right)\left( {2n - 2} \right) + 4n\left( {2n - 1} \right) + 2n^2
	}}{{\left( {2n - 1} \right)\left( {2n - 2} \right)\left( {2n - 3}
			\right)}}
	\end{align*}
	$\forall n \ge2$.
\end{corollary}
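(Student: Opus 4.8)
The plan is to recognize Corollary \ref{cor6} as an immediate specialization of Theorem \ref{thm14}, obtained by annihilating the boundary correction term in the definition of $\mathcal{P}(f;x,n)$. Recall that the identity defining $\mathcal{P}(f;x,n)$ (displayed just before Theorem \ref{thm14}) reads
\begin{align*}
\mathcal{P}\left( {f;x,n} \right) &= \frac{1}{n}\left( {\frac{{f\left( x \right) + f\left( {a + b - x} \right)}}{2} + \sum\limits_{k = 1}^{n - 1} {G_k\left(x\right)} } \right) - \frac{1}{{b - a}}\int_a^b {f\left( y \right)dy}
\\
&\qquad- \frac{{2 }}{{\left( {n + 1} \right)!\,n\left( {b - a} \right)}}\left[ {\left( {x - a} \right)^{n + 1}  + \left( {\frac{{a + b}}{2} - x} \right)^{n + 1} } \right] \cdot \frac{{f^{\left( {n - 1} \right)} \left( b \right) - f^{\left( {n - 1} \right)} \left( a \right)}}{{b - a}},
\end{align*}
so that $\mathcal{P}(f;x,n)$ differs from the left-hand quantity of \eqref{eq5.3} only by a single term proportional to $f^{(n-1)}(b) - f^{(n-1)}(a)$.

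First I would invoke the hypothesis $f^{(n-1)}(a) = f^{(n-1)}(b)$ $(n\ge 2)$. This makes the factor $f^{(n-1)}(b) - f^{(n-1)}(a)$ vanish, and hence the entire correction term in the display above is identically zero. Consequently
\[
\mathcal{P}\left( {f;x,n} \right) = \frac{1}{n}\left( {\frac{{f\left( x \right) + f\left( {a + b - x} \right)}}{2} + \sum\limits_{k = 1}^{n - 1} {G_k } } \right) - \frac{1}{{b - a}}\int_a^b {f\left( y \right)dy}
\]
for every admissible $x \in \left[a,\frac{a+b}{2}\right]$; that is, the absolute value appearing on the left-hand side of \eqref{eq5.3} equals $\left| \mathcal{P}(f;x,n) \right|$ exactly.

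Next I would apply Theorem \ref{thm14} directly: since the two quantities coincide, each of the five \v{C}eby\v{s}ev--Gr\"{u}ss bounds recorded in \eqref{eq5.2} for $\left|\mathcal{P}(f;x,n)\right|$ transfers verbatim to the left-hand side of \eqref{eq5.3}, with the same constants $A(n)$ and $B(n)$. No recomputation of the kernel integrals is required, as those were already carried out in deriving \eqref{eq5.2}. There is essentially no obstacle here; the only point deserving attention is that the \v{C}eby\v{s}ev functional bounds of Theorem \ref{thm11} are applied to $\mathcal{P}(f;x,n)$ \emph{as a whole}, so the vanishing of the boundary term does not interfere with their validity -- it merely simplifies the object being estimated. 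This completes the argument.
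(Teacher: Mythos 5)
Your proposal is correct and is precisely the (implicit) argument behind the corollary in the paper: the hypothesis $f^{(n-1)}(a)=f^{(n-1)}(b)$ kills the boundary term in the definition of $\mathcal{P}(f;x,n)$, so the left-hand side of \eqref{eq5.3} coincides with $\left|\mathcal{P}(f;x,n)\right|$ and the five bounds of Theorem \ref{thm14} carry over verbatim. The paper offers no separate proof for this corollary, and your one-step specialization is exactly what is intended.
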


\begin{remark}
	By setting
	$h_1\left(t\right)=\frac{1}{n!}f^{(n)}\left(t\right)k\left(t,x\right)$
	and $h_2\left(t\right)=\left(x-t\right)^{n-1}$, we obtain that
	\begin{align}
	\mathcal{C}\left( {h_1,h_2} \right) &= \frac{1}{n}\left(
	{\frac{{f\left( x \right) + f\left( {a + b - x} \right)}}{2} +
		\sum\limits_{k = 1}^{n - 1} {G_k\left(x\right) } } \right) -
	\frac{1}{{b - a}}\int_a^b {f\left( y \right)dy}
	\nonumber\\
	&\qquad-  \frac{1}{n!}\cdot\frac{{\left( {x - a} \right)^n  -
			\left( {x - b} \right)^n }}{n\left( {b - a} \right)}
	\cdot\frac{f^{\left( n \right)} \left( x \right) + f^{\left( n
			\right)} \left( {a + b - x} \right)}{2}
	\nonumber\\
	&:=\mathcal{Q}\left( {f;x,n} \right).\label{eq5.4}
	\end{align}
	Applying Theorem \ref{thm11}   Chebyshev type bounds for
	$\mathcal{Q}\left( {f;x,n} \right)$ can be proved. We shall omit
	the details.
\end{remark}

\begin{remark}
	\label{rem4}Bounds for the generalized formula \eqref{eq3.23} via
	Chebyshev-Gr\"{u}ss type inequalities can be done by setting
	$h_1\left(t\right)=\frac{(-1)^{n-1}}{n}f^{(n)}\left(t\right)$ and
	$h_2\left(t\right)=Q_{n-1}\left(t\right)S\left(t,x\right)$,
	therefore we have
	\begin{align*}
	&\mathcal{C}\left( {h_1,h_2} \right)
	\\
	&= \frac{(-1)^{n-1}}{n\left(b-a\right) } \int_a^b
	{Q_{n-1}\left(t\right) S\left( {t,x} \right)f^{\left( n \right)}
		\left( t \right)dt}
	\\
	&\qquad-   \frac{1}{b-a} \int_a^b {Q_{n-1}\left(t\right) S\left(
		{t,x} \right) dt} \times\frac{(-1)^{n-1}}{n\left(b-a\right) }
	\int_a^b { f^{\left( n \right)} \left( t \right)dt}
	\\
	&= \frac{1}{n\left(b-a\right) } \int_a^b {Q_{n-1}\left(t\right)
		S\left( {t,x} \right)f^{\left( n \right)} \left( t \right)dt}
	\\
	&\qquad-   \frac{1}{b-a} \int_a^b {Q^{\prime}_{n}\left(t\right)
		S\left( {t,x} \right) dt} \times\frac{(-1)^{n-1}}{n}\cdot
	\frac{{f^{\left( {n - 1} \right)} \left( b \right) - f^{\left( {n
					- 1} \right)} \left( a \right)}}{{b - a}}
	\end{align*}
	\begin{align*}
	&= \frac{(-1)^{n-1}}{n\left(b-a\right) } \int_a^b
	{Q_{n-1}\left(t\right) S\left( {t,x} \right)f^{\left( n \right)}
		\left( t \right)dt}
	\\
	&\qquad-   \left[ {\frac{{Q_n \left( x \right) + Q_n \left( {a + b
					- x} \right)}}{2} - \frac{{Q_{n + 1} \left( b \right) - Q_{n + 1}
				\left( a \right)}}{{b - a}}} \right]
	\\
	&\qquad\qquad\times\frac{(-1)^{n-1}}{n}\cdot \frac{{f^{\left( {n -
					1} \right)} \left( b \right) - f^{\left( {n - 1} \right)} \left( a
			\right)}}{{b - a}}
	\\
	&=\mathcal{L}\left({f,Q_n,x}\right)
	\end{align*}
	for all $x\in \left[a,\frac{a+b}{2}\right]$.    We left the
	representations to the reader.
\end{remark}

\begin{theorem}
	\label{thm15}Let $I$ be a real interval, $a,b \in I^{\circ}$
	$(a<b)$. Let $f:I \to \mathbb{R}$ be $(n+1)$-times differentiable
	on $I^{\circ}$ such that $f^{(n+1)}$ is absolutely continuous on
	$I^{\circ}$ with $\left( {\cdot - t} \right)^{n - 1} k\left(
	{t,\cdot} \right)f^{\left( n \right)} \left( t \right)$ is
	integrable. Then, for all $n\ge2$ we have
	\begin{multline}
	\left|{\mathcal{L}\left({f,Q_n,x}\right)} \right|
	\\
	\le\left\{
	\begin{array}{l}
	\frac{\left( {b - a} \right)^2}{12n}\left\| {Q_{n - 1}  + Q_{n -
			2} S\left( { \cdot ,x} \right)} \right\|_\infty \cdot
	\left\|{f^{\left( {n + 1} \right)} }\right\|_{\infty}
	,\,\,\,\,{\rm{if}}\,\,f^{(n+1)} \in
	L_{\infty}\left(\left[a,b\right]\right)\\
	\\
	\frac{1}{4n}\left( {M_1  - m_1 } \right)\left( {M_2  - m_2 }
	\right),\,\,\,\,\,\,\,\,\,\,\,\,\,\,\,\,\,\,\,\,\,\,\,\,\,\,\,\,\,\,\,\,\,\,\,\,\,\,\,
	\qquad
	{\rm{if}}\,\, m_1\le f^{(n)} \le M_1, \\
	\\
	\frac{b-a}{\pi^2n}D\left( {n,x} \right)  \cdot \left\|{f^{\left(
			{n + 1} \right)} }\right\|_{2}
	,\qquad\qquad\qquad\qquad\,\,\,\,\,\,\,  {\rm{if}}\,\,f^{(n+1)}
	\in L_{2}\left(\left[a,b\right]\right),\\
	\\
	\frac{b-a}{8n}\left\| {Q_{n - 1}  + Q_{n - 2} S\left( { \cdot ,x}
		\right)} \right\|_\infty \cdot    \left( {M_1 - m_1}
	\right),\,\,\,\,\,\,\,\,\,\,\,{\rm{if}}\,\, m_1\le f^{(n)} \le M_1, \\
	\\
	\frac{b-a}{8n}\left( {M_2  - m_2 } \right)\cdot \left\|{f^{\left(
			{n + 1} \right)}
	}\right\|_{\infty},\,\,\,\,\,\,\,\,\qquad\qquad\qquad
	{\rm{if}}\,\,f^{(n+1)} \in
	L_{\infty}\left(\left[a,b\right]\right),
	\end{array} \right.\label{eq5.5}
	\end{multline}
	holds for all $x\in \left[a,\frac{a+b}{2}\right]$,  where
	\begin{align*}
	M_2 : = \mathop {\max }\limits_{a \le t \le b} \left\{ {Q_{n - 1}
		\left( t \right)S\left( {t,x} \right)} \right\},\,\, m_2 : =
	\mathop {\min }\limits_{a \le t \le b} \left\{ {Q_{n - 1} \left( t
		\right)S\left( {t,x} \right)} \right\}
	\end{align*}
	and
	\begin{align*}
	D\left( {n,x} \right) = \left( {\int_a^b {\left| {Q_{n - 1} \left(
				t \right) + Q_{n - 2} \left( t \right)S\left( {t,x} \right)}
			\right|^2 dt} } \right)^{1/2} \qquad \forall n\ge2.
	\end{align*}
\end{theorem}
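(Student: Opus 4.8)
The plan is to recognize $\mathcal{L}(f,Q_n,x)$ as the \v{C}eby\v{s}ev functional $\mathcal{C}(h_1,h_2)$ for the pair exhibited in Remark \ref{rem4}, namely $h_1(t)=\frac{(-1)^{n-1}}{n}f^{(n)}(t)$ and $h_2(t)=Q_{n-1}(t)S(t,x)$ on $[c,d]=[a,b]$, and then to feed this pair into the four estimates of Theorem \ref{thm11}. Everything reduces to evaluating the quantities that appear on the right-hand side of \eqref{eq4.4}: the sup-, $L_2$- and oscillation-data of $h_1$ and of $h_2$, together with the factor $d-c=b-a$.

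First I would differentiate. For $h_1$ one has $h_1'=\frac{(-1)^{n-1}}{n}f^{(n+1)}$, so $\|h_1'\|_\infty=\frac1n\|f^{(n+1)}\|_\infty$ and $\|h_1'\|_2=\frac1n\|f^{(n+1)}\|_2$; moreover if $m_1\le f^{(n)}\le M_1$ then, irrespective of the parity of $n$, the oscillation of $h_1$ equals $\frac{M_1-m_1}{n}$. For $h_2$ I would use the harmonic/Appell relation $Q_k'=Q_{k-1}$ together with $\frac{d}{dt}S(t,x)=1$ on each of the three subintervals of \eqref{eq3.27}; this yields $h_2'=Q_{n-1}+Q_{n-2}S(\cdot,x)$ (which forces $n\ge2$ so that $Q_{n-2}$ is defined), whence $\|h_2'\|_\infty=\|Q_{n-1}+Q_{n-2}S(\cdot,x)\|_\infty$ and $\|h_2'\|_2=D(n,x)$. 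The oscillation data of $h_2$ is exactly $M_2-m_2$, with $M_2,m_2$ the maximum and minimum of $Q_{n-1}(t)S(t,x)$ as defined in the statement.

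With these computations in hand, each line of \eqref{eq5.5} is obtained by substituting into the corresponding line of \eqref{eq4.4}: the first ($L_\infty$--$L_\infty$) bound gives the $\frac{(b-a)^2}{12n}$ term, the second (double two-sided bound) gives the $\frac{1}{4n}(M_1-m_1)(M_2-m_2)$ term, and the third ($L_2$--$L_2$) bound gives the $\frac{b-a}{\pi^2 n}D(n,x)$ term. The last two lines both issue from the fourth (Ostrowski) bound in \eqref{eq4.4}: applying it with $h_1$ two-sided bounded and $h_2'\in L_\infty$ produces the $\frac{b-a}{8n}\|Q_{n-1}+Q_{n-2}S(\cdot,x)\|_\infty(M_1-m_1)$ term, while invoking the symmetry $\mathcal{T}(h_1,h_2)=\mathcal{T}(h_2,h_1)$ evident from \eqref{eq4.1} and applying the same bound with the roles reversed (so $h_2$ two-sided bounded and $h_1'\in L_\infty$) produces the final $\frac{b-a}{8n}(M_2-m_2)\|f^{(n+1)}\|_\infty$ term.

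The only genuinely delicate point is the differentiation of $h_2$: the kernel $S(\cdot,x)$ in \eqref{eq3.27} has jump discontinuities at $t=x$ and $t=a+b-x$, so $h_2=Q_{n-1}S(\cdot,x)$ is merely piecewise absolutely continuous and $h_2'=Q_{n-1}+Q_{n-2}S(\cdot,x)$ is an almost-everywhere derivative rather than a classical one. I would therefore either split all integrals over the three subintervals on which $S(\cdot,x)$ is smooth and recombine, or appeal to the version of the \v{C}eby\v{s}ev bounds valid for piecewise absolutely continuous integrands; since the jumps lie on a set of measure zero, the $L_\infty$ and $L_2$ norms of $h_2'$ are unchanged and the substitution into \eqref{eq4.4} goes through. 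The remaining norm and oscillation evaluations are routine.
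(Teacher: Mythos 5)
Your proposal is correct and follows essentially the same route as the paper: the paper's proof simply invokes Theorem \ref{thm11} for the pair $h_1=\frac{(-1)^{n-1}}{n}f^{(n)}$, $h_2=Q_{n-1}S(\cdot,x)$ identified in Remark \ref{rem4}, which is exactly what you do, and your norm and oscillation computations (including $h_2'=Q_{n-1}+Q_{n-2}S(\cdot,x)$ and the role-reversal for the last line) reproduce all five bounds of \eqref{eq5.5}. Your explicit handling of the jump discontinuities of $S(\cdot,x)$ at $t=x$ and $t=a+b-x$ is a point the paper glosses over, but it does not change the approach.
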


\begin{proof}
	The proof of the result follows directly by applying Theorem
	\ref{thm11} to the functions
	$h_1\left(t\right)=\frac{(-1)^{n-1}}{n}f^{(n)}\left(t\right)$ and
	$h_2\left(t\right)=P_{n-1}\left(t\right)S\left(t,x\right)$ as
	shown previously in Remark \ref{rem4} and the rest of the proof
	done using Theorem \ref{thm14}.
\end{proof}

\begin{corollary}
	\label{cor7}Let assumptions of Theorem \ref{thm15} hold. If
	moreover, $f^{\left( {n - 1} \right)}\left( {a} \right)=f^{\left(
		{n - 1} \right)}\left( {b} \right)$ $(n \ge2)$, then the
	inequality
	\begin{multline}
	\left|{\frac{1}{n}\left[ {\frac{{f\left( x \right) + f\left( {a +
						b - x} \right)}}{2} + \sum\limits_{k = 1}^{n - 1} { \left\{{ T_k
					\left(x\right)+ \widetilde{F_k } \left(a,b\right) }\right\}} }
		\right] - \frac{1}{{b - a}}\int_a^b {f\left( y \right)dy} }
	\right|
	\\
	\le\left\{
	\begin{array}{l}
	\frac{\left( {b - a} \right)^2}{12n}\left\| {Q_{n - 1}  + Q_{n -
			2} S\left( { \cdot ,x} \right)} \right\|_\infty \cdot
	\left\|{f^{\left( {n + 1} \right)} }\right\|_{\infty}
	,\,\,\,\,{\rm{if}}\,\,f^{(n+1)} \in
	L_{\infty}\left(\left[a,b\right]\right)\\
	\\
	\frac{1}{4n}\left( {M_1  - m_1 } \right)\left( {M_2  - m_2 }
	\right),
	\,\,\,\,\,\,\,\,\,\,\,\,\,\,\,\,\,\,\,\,\,\,\,\,\,\,\,\,\,\,\,\,\,\,\,\,\,\,\qquad
	{\rm{if}}\,\, m_1\le f^{(n)} \le M_1, \\
	\\
	\frac{b-a}{\pi^2n}D\left( {n,x} \right)  \cdot \left\|{f^{\left(
			{n + 1} \right)} }\right\|_{2}
	,\qquad\qquad\qquad\qquad\,\,\,\,\,\, {\rm{if}}\,\,f^{(n+1)}
	\in L_{2}\left(\left[a,b\right]\right),\\
	\\
	\frac{b-a}{8n}\left\| {Q_{n - 1}  + Q_{n - 2} S\left( { \cdot ,x}
		\right)} \right\|_\infty \cdot    \left( {M_1 - m_1}
	\right),\,\,\,\,\,\,\,\,\,\,{\rm{if}}\,\, m_1\le f^{(n)} \le M_1, \\
	\\
	\frac{b-a}{8n}\left( {M_2  - m_2 } \right)\cdot \left\|{f^{\left(
			{n + 1} \right)}
	}\right\|_{\infty},\,\,\,\,\,\,\,\qquad\qquad\qquad
	{\rm{if}}\,\,f^{(n+1)} \in
	L_{\infty}\left(\left[a,b\right]\right),
	\end{array} \right.\label{eq5.6}
	\end{multline}
	holds for all $x\in \left[a,\frac{a+b}{2}\right]$.
\end{corollary}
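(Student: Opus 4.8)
The plan is to read off \eqref{eq5.6} from Theorem \ref{thm15} once the additional hypothesis $f^{(n-1)}(a)=f^{(n-1)}(b)$ is used to collapse the functional $\mathcal{L}(f,Q_n,x)$ onto the quadrature expression appearing on the left of \eqref{eq5.6}. First I would recall the closed form of $\mathcal{L}(f,Q_n,x)$ computed in Remark \ref{rem4},
\begin{multline*}
\mathcal{L}(f,Q_n,x) = \frac{(-1)^{n-1}}{n(b-a)}\int_a^b Q_{n-1}(t)\,S(t,x)\,f^{(n)}(t)\,dt \\
- \left[\frac{Q_n(x)+Q_n(a+b-x)}{2} - \frac{Q_{n+1}(b)-Q_{n+1}(a)}{b-a}\right]\frac{(-1)^{n-1}}{n}\cdot\frac{f^{(n-1)}(b)-f^{(n-1)}(a)}{b-a}.
\end{multline*}
The decisive observation is that the second summand (the \v{C}eby\v{s}ev correction term) carries the factor $f^{(n-1)}(b)-f^{(n-1)}(a)$, which is exactly zero under the imposed hypothesis. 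Consequently $\mathcal{L}(f,Q_n,x)$ reduces to its first summand alone.

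Next I would identify that surviving integral with the quantity whose modulus is estimated in \eqref{eq5.6}. By the Fink-type identity \eqref{eq3.23}, specialized to the symmetric nodes $y=x$, $z=a+b-x$ with midpoint $\frac{a+b}{2}$ (so that $K(t;x,\tfrac{a+b}{2},a+b-x)=S(t,x)$ by \eqref{eq3.27}), the first summand equals precisely
\[
\frac{1}{n}\left[\frac{f(x)+f(a+b-x)}{2} + \sum_{k=1}^{n-1}\left\{T_k(x)+\widetilde{F_k}(a,b)\right\}\right] - \frac{1}{b-a}\int_a^b f(y)\,dy.
\]
Therefore, under $f^{(n-1)}(a)=f^{(n-1)}(b)$, this quadrature expression equals $\mathcal{L}(f,Q_n,x)$ identically, which is exactly the left-hand side of \eqref{eq5.6}.

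Finally I would simply invoke Theorem \ref{thm15}, whose five branches bound $\left|\mathcal{L}(f,Q_n,x)\right|$ by exactly the five right-hand sides displayed in \eqref{eq5.6}; substituting the identity just established then yields \eqref{eq5.6}. No step here poses a genuine obstacle, since the corollary is an immediate specialization of Theorem \ref{thm15}. The only points requiring care are the algebraic matching of the Fink remainder \eqref{eq3.23} with the quadrature side and the vanishing of the correction term, both of which are transparent from the identities already recorded in Remark \ref{rem4} and in Section \ref{sec3}.
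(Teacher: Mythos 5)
Your proposal is correct and follows exactly the route the paper intends (the paper leaves this corollary without an explicit proof, but the analogous Corollary \ref{cor6} is derived the same way): the hypothesis $f^{(n-1)}(a)=f^{(n-1)}(b)$ kills the \v{C}eby\v{s}ev correction term in $\mathcal{L}(f,Q_n,x)$ from Remark \ref{rem4}, the surviving integral is identified with the quadrature expression via the Fink-type identity \eqref{eq3.23}, and Theorem \ref{thm15} then supplies the five bounds verbatim.
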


\begin{remark}
	In all above estimates, if one assumes that $f^{(n)}$ is convex,
	$r$-convex, quasi-convex, $s$-convex, $P$-convex, or $Q$-convex;
	we can obtain other new bounds involving convexity.
\end{remark}

\end{document}